\newcommand{\C}{\mathbb{C}}
\newcommand{\R}{\mathbb{R}}
\newcommand{\Z}{\mathbb{Z}}
\newcommand{\N}{\mathbb{N}}
\newcommand{\LL}{\mathcal{L}}
\newcommand{\E}{\mathbb{E}}
\DeclareMathOperator{\rank}{rank}
\DeclareMathOperator*{\esup}{ess\,sup}
\DeclareMathOperator*{\intr}{int}
\DeclareMathOperator*{\supp}{supp}
\DeclareMathOperator*{\dist}{dist}
\DeclareMathOperator*{\diam}{diam}
\newtheoremstyle{break}
  {}{}{}{}
  {\bfseries}{}
  {\newline}{}
\theoremstyle{plain}
\newtheorem{theorem}{Theorem}[section]
\newtheorem{definition}[theorem]{Definition}
\newtheorem{proposition}[theorem]{Proposition}
\newtheorem{lemma}[theorem]{Lemma}
\newtheorem{corollary}[theorem]{Corollary}
\newtheorem{remark}[theorem]{Remark}
\newtheorem*{theorem*}{Theorem}
\theoremstyle{remark}
\newtheorem{example}[theorem]{Example}
\theoremstyle{break}
\newcommand{\abs}[1]{\left\lvert#1\right\rvert}
\newcommand{\norm}[1]{\left\lVert#1\right\rVert}
\newcommand{\tnorm}[1]{{\left\vert\kern-0.25ex\left\vert\kern-0.25ex\left\vert #1
    \right\vert\kern-0.25ex\right\vert\kern-0.25ex\right\vert}}
\DeclarePairedDelimiterX{\inp}[2]{\langle}{\rangle}{#1, #2}
\newcommand*\conj[1]{\overline{#1}}
\newcommand*{\cl}[1]{\overline{#1}}
\newcommand\restr[2]{{
  \left.\kern-\nulldelimiterspace 
  #1 
  \vphantom{\big|} 
  \right|_{#2} 
  }}
\newcommand*{\intf}{%
  \@ifnextchar_{\intf@sub}{%
    \@ifnextchar^{\intf@sup}{%
      \intf@{}{}%
    }%
  }%
}
\def\intf@sub_#1{%
  \@ifnextchar^{%
    \intf@sub@sup{#1}%
  }{%
    \intf@{#1}{}%
  }%
}
\def\intf@sup^#1{%
  \@ifnextchar_{%
    \intf@sup@sub{#1}%
  }{%
    \intf@{}{#1}%
  }%
}
\def\intf@sub@sup#1^#2{\intf@{#1}{#2}}
\def\intf@sup@sub#1_#2{\intf@{#2}{#1}}
\def\intf@#1#2#3d#4{%
  \int
  \ifx\\#1\\\else _{#1}\fi
  \ifx\\#2\\\else ^{#2}\fi
  \!#3\,\mathrm{d}#4%
}
\newcommand{\wnorm}[1]{\abs{#1}}
\newcommand{\esigma}{\sigma_{ess}}
\newcommand{\erho}{\rho_{ess}}
\newcommand{\lnorm}[1]{\abs{#1}_{L^1}}
\newcommand{\clo}[1]{\overline{#1}}
\DeclareMathOperator{\Var}{Var}
\DeclareMathOperator{\BV}{BV}
\newcommand{\Va}{\mathbb{V}_{\beta}}
\newcommand{\funcosc}{\textup{\textrm{osc}}}
\newcommand{\osc}[1]{\textup{\textrm{osc}}\left(#1\right)}
\newcommand{\avg}[2]{\hat{#1}_{#2}}
\newcommand{\bfone}{\mathbf{1}}
\DeclareMathOperator{\Id}{Id}
\newcommand\blfootnote[1]{%
  \begingroup
  \renewcommand\thefootnote{}\footnote{#1}%
  \addtocounter{footnote}{-1}%
  \endgroup
}
\newenvironment{customthm}[1]
 {\innercustomthm}
 {\endinnercustomthm}
\title{Stability and approximation of statistical limit laws for multidimensional piecewise expanding maps}
\author{Harry Crimmins}
\email{h.crimmins@unsw.edu.au}
\address{School of Mathematics and Statistics \br
University of New South Wales \br
Sydney NSW 2052, Australia}
\author{Gary Froyland}
\email{g.froyland@unsw.edu.au}
\address{School of Mathematics and Statistics \br
University of New South Wales \br
Sydney NSW 2052, Australia}
\date{January 8, 2019}
\begin{document}

\begin{abstract}
The unpredictability of chaotic nonlinear dynamics leads naturally to statistical descriptions, including probabilistic limit laws such as the central limit theorem and large deviation principle.
A key tool in the Nagaev-Guivarc'h spectral method for establishing statistical limit theorems is a ``twisted'' transfer operator.
In the abstract setting of Keller-Liverani \cite{keller1999stability} we prove that derivatives of all orders of the leading eigenvalues and eigenprojections of the twisted transfer operators with respect to the twist parameter are stable when subjected to a broad class of perturbations.
As a result, we demonstrate stability of the variance in the central limit theorem and the rate function from a large deviation principle with respect to deterministic and stochastic perturbations of the dynamics and perturbations induced by numerical schemes.
We apply these results to piecewise expanding maps in one and multiple dimensions, including new convergence results for Ulam projections on quasi-H\"older spaces.
\end{abstract}

\maketitle

\blfootnote{The authors' emails are h.crimmins@unsw.edu.au and g.froyland@unsw.edu.au, respectively.}

\newpage



\section{Introduction}

Transfer operators have proved to be powerful tools for the analysis of dynamical systems possessing some expanding properties.
If $(X,m)$ is a probability space and $T : X \to X$ is a non-singular transformation, one aims to select a Banach space $(B,\norm{\cdot})$, $B\subset L^1(m)$ that is compatible with the dynamics of $T$ in the sense that the transfer operator $\mathcal{L}:B\to B$ is quasi-compact.
Quasi-compactness implies a variety of desirable phenomena, including a finite number of absolutely continuous invariant probability measures (ACIMs) with densities lying in $B$, and under a mixing condition, a unique ACIM with exponential decay of correlations \cite{hofbauer1982ergodic, liverani1995decay}.

A natural question is how the spectral data corresponding to the isolated eigenvalues of $\mathcal{L}$ behave when either the map $T$ or the operator $\mathcal{L}$ is perturbed.
It is well known that many interesting perturbations are not close to $\mathcal{L}$ in the operator norm induced by the norm of $B$, and therefore standard operator perturbation theory, e.g. \cite{kato1966perturbation} cannot be employed.
For piecewise monotonic maps of the interval, with $\mathcal{L}$ acting on the space of functions of bounded variation $\BV$, Keller \cite{keller1982stochastic} introduced a ``triple norm'' $\tnorm{A}=\sup_{{\rm var}(f)+\lnorm{f}\le 1}\lnorm{Af}$ for $A:\BV\to \BV$.
For a variety of deterministic, stochastic, and numerical perturbations, the resulting perturbed transfer operator  $\mathcal{L}_\epsilon$ is close to $\mathcal{L}$ in $\tnorm{\cdot}$.
These results were abstracted in a seminal paper by Keller and Liverani \cite{keller1999stability}, where $\lnorm{\cdot}$ becomes a ``weak'' norm $\abs{\cdot}$ and ${\rm var}(\cdot)+\lnorm{\cdot}$ becomes a ``strong'' norm $\|\cdot\|$ on a Banach space $B$, with the closed unit $\norm{\cdot}$-ball compact in $\abs{\cdot}$.
Using this abstract setup, with additional conditions on the growth of the norms of iterates of $\mathcal{L}_\epsilon$ in the weak and strong norms, \cite{keller1999stability} proved stability of the isolated spectral data.

The use of spectral theory has also been a remarkably successful strategy for establishing statistical laws for dynamical systems such as central limit theorems \cite{RE83,Broise,HennionHerve,AyyerLiveraniStenlund} and  large deviation principles \cite{HennionHerve,rey-young08}, as well as local central limit theorems \cite{RE83,HennionHerve,gouezel05}, Berry-Esseen theorems \cite{GH88,gouezel05}, and vector-valued almost-sure invariance principles \cite{melbourne_nicol09,gouezel10}.
We refer the reader to the excellent survey \cite{gouezel2015limit} and the references therein.
Assuming that $T$ possesses a unique ACIM $\mu$ with $\frac{\mathrm{d}\mu}{\mathrm{d}m}\in B$, and
given a real-valued observable $g\in B\cap L^\infty(m)$, we can define a stochastic process $Z_k:=\{g\circ T^k\}_{k\ge 0}$, stationary with respect to $\mu$.
The process $Z_k$ has mean $\bar{g}=\intf_X g d\mu$, satisfies a central limit theorem with time-asymptotic variance $\sigma_g^2$ about the mean $\bar{g}$, and has exponentially decaying probabilities for large deviations from $\bar{g}$, quantified by a rate function $r_g(s)=-\lim_{n\to\infty}\frac{1}{n}\log \mu(\frac{1}{n}\sum_{i=0}^{n-1}g\circ T^{n}>\bar{g}+s)$.
The quantities $\sigma_g$ and $r_g$ are accessible via derivatives of the leading eigenvalue of an analytically ``twisted'' transfer operator $\mathcal{L}(z)f:=\mathcal{L}(e^{z g}f)$, taken with respect to the twist parameter $z$.

In this work we bring together these two threads to prove stability of statistical laws under a broad class of perturbations satisfying abstract conditions we denote by (KL); see Definition \ref{KLdefn} for a formal definition.
We extend the general spectral stability approach of \cite{keller1999stability} to twisted transfer operators, providing access to stability results for statistical laws obtained via spectral theory across a range of perturbations in a general abstract setting.
For the specific problem of numerical approximation of statistical quantities, our flexible setup allows a variety of projection methods, enabling the projection to be tailored to the particular class of dynamics to achieve the most efficient numerical scheme.

We outline below our first main abstract result (see Theorem \ref{theorem:convergence_of_eigendata_derivs} for a more precise formal statement of the result) concerning stability of the derivatives of spectral data of twisted quasi-compact operators, taken with respect to the twist parameter.

\begin{customthm}{A}\label{thmA} Let $A_0$ be a quasi-compact operator with simple leading eigenvalue $\lambda_0$ and quasi-compact decomposition $\lambda_0\Pi_0+N_0$, and let $\{A_\epsilon\}_{\epsilon\ge 0}$ be a family of perturbations of type (KL).
Let  $A_\epsilon(z)=A_\epsilon M(z)$ for a compactly $\wnorm{\cdot}$-bounded twist $M(z)$ (see Definition \ref{def:twist}).
For $z$ in a sufficiently small neighbourhood of the origin and $\epsilon$ sufficiently small one has:
\begin{enumerate}
\item a quasi-compact decomposition $A_\epsilon(z)=\lambda_\epsilon(z)\Pi_\epsilon(z)+N_\epsilon(z)$, where $\lambda_\epsilon(z), \Pi_\epsilon(z)$ and $N_\epsilon(z)$ depend analytically on $z$,
\item for each $n \in \N$ the following convergence for $n^{th}$-order derivatives with respect to $z$ as $\epsilon \to 0$:
  \begin{enumerate}
    \item $\lambda_\epsilon^{(n)}(\cdot)$ converges compactly to $\lambda_0^{(n)}(\cdot)$,
    \item $\Pi_\epsilon^{(n)}(\cdot)$ converges compactly to $\Pi_0^{(n)}(\cdot)$ in $\tnorm{\cdot}$,
    \item $N_\epsilon^{(n)}(\cdot)$ converges compactly to $N_0^{(n)}(\cdot)$ in $\tnorm{\cdot}$.
  \end{enumerate}
  \end{enumerate}
\end{customthm}

In fact, in Theorem \ref{theorem:convergence_of_eigendata_derivs} we prove a stronger H{\"o}lder estimate for the convergence in (a), (b) and (c).

When $A_0$ is the transfer operator $\mathcal{L}$ corresponding to a uniformly expanding, piecewise $\mathcal{C}^2$ map $T:X\to X$, where $X$ may be an interval, or a more complicated higher dimensional domain as in \cite{saussol2000absolutely}, three specific examples of the types of perturbations (KL) we consider include:
  \begin{enumerate}
    \item[NP:] \textsl{Numerical approximations of $\mathcal{L}$ by some finite-rank operator, such as in Ulam's method \cite{li}.  Given a partition of $X$ into $n$ connected elements of diameter less than $\epsilon$, define $\mathbb{E}_n$ to be the conditional expectation operator with respect to this partition and let $\mathcal{L}_\epsilon=\mathbb{E}_n\circ\mathcal{L}$.}
  \end{enumerate}
For untwisted operators, stability of the ACIM with respect to (NP) has been proven in one dimension \cite{li} with $B$ the space of bounded variation $\BV$ and in multiple dimensions \cite{DZ96,murraymultidim} with $B$ the space of generalised bounded variation. For the one dimensional case we mention also the works \cite{gora1984, keller1982stochastic}, which generalise the results of \cite{li}.
In Section \ref{sec:1dulampert} we briefly recount the fact that the family of operators $\mathcal{L}_\epsilon$ satisfies (KL) for interval maps.
In Section \ref{sec:piecewise_expanding_multi_dim} we prove the stability of the ACIM when $B$ is the Banach algebra of quasi-H\"older functions and show that the family of operators $\mathcal{L}_\epsilon$ satisfies (KL) for multidimensional piecewise expanding maps.
  \begin{enumerate}
    \item[SP:] \textsl{Stochastic perturbations that arise from the convolution of the Perron-Frobenius operator with an appropriate bistochastic, nonnegative kernel} $K_\epsilon(x,y)$:  $\mathcal{L}_\epsilon f(x)=\intf (\LL f)(y)K_\epsilon(y,x) dm(y)$.
  \end{enumerate}
With $X=[0,1]$ and $B$ the space of functions of bounded variation, if for $U\subset X$ the measure $m_\epsilon(U):=\intf_{U \times U} K_\epsilon dm\times m$ converges weakly to $m$ lifted to the diagonal of $X\times X$ and under mild monotonicity conditions, Corollary 17 \cite{keller1982stochastic}, proves stability of the ACIM with respect to (SP) for piecewise expanding $T$ with $\abs{T'}>2$. Under these conditions, the family of operators $\mathcal{L}_\epsilon$ also satisfies (KL).
In Section \ref{sec:piecewise_expanding_multi_dim} we provide a new proof of stability of the ACIM when $B$ is the space of quasi-H\"older functions and show that the family of operators $\mathcal{L}_\epsilon$ satisfies (KL).
  \begin{enumerate}
    \item[DP:] \textsl{Deterministic perturbations of $T$ in an appropriate metric space. For example, in a ``Skorohod''-type metric in the case of piecewise expanding maps on the interval \cite[Section 3]{keller1982stochastic}:
  \begin{equation}\begin{split}\label{staticmetric}
    d(T,T_\epsilon):=\inf\{\delta>0\,:\, &\exists U\subset [0,1],\, m(U)>1-\delta; \\
    &\exists \mbox{ a diffeomorphism } h:[0,1]\to[0,1], \\
    & T|_U=T_\epsilon\circ h|_U,\mbox{ and }\forall x\in U,\\ &|h(x)-x|<\delta,\, |(1/h'(x))-1|<\delta\}.
  \end{split}\end{equation}}
  \end{enumerate}
If $\lim_{\epsilon \to 0} d(T,T_\epsilon) = 0$ then the family of operators $\{\mathcal{L}_\epsilon\}_{\epsilon \ge 0}$ satisfies (KL) \cite[\S3]{keller1982stochastic}.

Given perturbations $\{\LL_\epsilon\}_{\epsilon \ge 0}$ satisfying (KL), we use the spectral formulae for the variance $\sigma_g^2$ and rate function $r_g$ to define perturbed versions of these quantities.
Let $\sigma_{g,\epsilon}^2=\lambda_\epsilon^{(2)}(0)$ be the perturbed variance and let the convex conjugate of the map $z \mapsto \log\lambda_\epsilon(z)$ be the perturbed rate function.
Below we outline our second main result (see Theorems \ref{thm:stability_of_variance}, \ref{thm:convergence_of_rate_functions} and Proposition \ref{prop:estimation_of_truncated_rate_func} for formal statements), which guarantees stability of the variance $\sigma_g^2$ and the rate function $r_g$ under the perturbations of type (KL).
We emphasise that we only need to check the perturbation conditions (KL) for the \emph{untwisted} ($z=0$) operators, and, as with Theorem \ref{thmA}, that we actually prove stronger H{\"o}lder estimates than the modes of convergence given below.

\begin{customthm}{B}\label{thmB}
Under the hypotheses of Theorem \ref{thmA} and the Nagaev-Guivarc'h spectral method, with $A_\epsilon=\mathcal{L}_\epsilon$, $M(z)f=e^{zg}f$, and $B$ a Banach algebra, one has
\begin{enumerate}
\item Stability of the variance:  $\lim_{\epsilon\to 0}\lambda_\epsilon^{(2)}(0)=\sigma_g^2$.
\item Stability of the rate function: If each $\LL_\epsilon$ is positive then for every compact subset $W$ of the domain of $r_g$ there exists a closed interval $V$ such that $\lim_{\epsilon \to 0} \sup_{z \in V} (sz - \ln(\lambda_\epsilon(z))) = r_g(s)$ uniformly for $s \in W$.
\end{enumerate}
\end{customthm}

Theorem \ref{thmB} guarantees convergence of numerical estimates of statistical laws produced by a variety of numerical schemes.
In this work we explore Ulam's method in detail, including new results on the quasi-H\"older spaces introduced by Saussol \cite{saussol2000absolutely}, which provide a flexible setting for piecewise expanding maps on complicated (possibly fractal) domains $X\subset \mathbb{R}^d$.
We prove that perturbations of the type (NP) and (SP) above are of type (KL) in the quasi-H\"older spaces and thus Theorems \ref{thmA} and \ref{thmB} apply to such perturbations.
While we have focussed on Ulam's method, our stability results can also be applied to other numerical methods of projection type (e.g.\ Galerkin methods projecting onto other locally or globally supported bases).

The stability of the variance under differentiable perturbations in the underlying dynamics has been considered for Anosov diffeomorphisms \cite{gouezel2006banach} and Lorenz flows \cite{bahsoun2018variance}. For perturbations of type (DP) the same has been proven for Lasota-Yorke maps \cite{keller2008continuity}.
There has also been recent interest in the numerical approximation of the variance $\sigma_g^2$.
Convergence of Ulam-based estimates for Lasota-Yorke maps has been rigorously implemented in one dimension \cite{bahsoun2016rigorous}, with a rigorous computation for the full-branched Lanford map, and extensions of the method for the intermittent Liverani-Saussol-Vaienti (LSV) map.
A periodic point algorithm for real analytic expanding interval maps and real analytic observables \cite{pollicott2017rigorous} exploits the analyticity to achieve rigorous estimates of the variance with rapid convergence.
For full-branched maps of the interval, Ulam's method has been replaced with a Galerkin method that utilises a Fourier or Chebyshev basis \cite{wormell2017spectral} to capitalise on the smoothness of the dynamics and increase the speed of convergence.
We note that it is possible to obtain the stability of the variance without using Theorem \ref{thmA}, but our proof of the stability of the rate function critically uses the compact convergence in Theorem \ref{thmA}, and to the best of our knowledge Theorem \ref{thmB} is the first rigorous stability and approximation result for the rate function for deterministic dynamical systems.
This point is discussed further in Remark \ref{remark:stability_of_variance}.

An outline of the paper is as follows.
Section \ref{sec:qc} reviews the stability theory of \cite{keller1999stability} for (untwisted) quasi-compact operators.
Section \ref{sec:twistedqc} introduces abstract compactly-bounded twists and proves the abstract stability of derivatives of the twisted spectral data (Theorem \ref{theorem:convergence_of_eigendata_derivs}).
Section \ref{sec:application_to_NG} reviews the Nagaev-Guivarc'h spectral method and verifies the applicability of the results of Section \ref{sec:twistedqc} in the setting of the method.
Using the stability of the derivatives of twisted spectral data, we prove the stability of the variance in Section \ref{subsec:stability_estimation_of_variance} (Theorem \ref{thm:stability_of_variance}), and stability of the rate function in Section \ref{subsec:stability_estimation_of_rate_function} (Theorem \ref{thm:convergence_of_rate_functions} and Proposition \ref{prop:estimation_of_truncated_rate_func}).
In Section \ref{subsec:stability_estimation_of_variance} we also obtain a derivative-free expression for the approximate variance $\lambda_\epsilon^{(2)}(0)$ for use in computations (Proposition \ref{prop:2nd_deriv_twisted_eigenvalue_formula}).
In Section \ref{sec:piecewise_expanding_one_dim} we provide detailed numerical experiments for a piecewise expanding interval map and four different observables, illustrating the approximation of the variance and rate function, and interpreting the results in terms of the interplay of dynamics and observable.
In Section \ref{sec:piecewise_expanding_multi_dim} we treat the multidimensional piecewise expanding maps of Saussol \cite{saussol2000absolutely} and prove that perturbations of the type (NP) and (SP) are compatible with the Banach algebra of quasi-H\"older functions, yielding stability and approximation results of statistical properties in a multidimensional setting.

\section{Spectral stability}\label{sec:abstract_twist_stability}

In Section \ref{sec:qc} we discuss known results concerning the stability of the spectrum of quasi-compact operators and in Section \ref{sec:twistedqc} we state our main results on spectral stability of ``twisted'' quasi-compact operators.
Let $(B, \norm{\cdot})$ be a Banach space over $\mathbb{C}$ and denote by $L(B)$ the Banach space of bounded linear operators from $B$ to itself.

\begin{definition}[Quasi-compactness]\label{def:quasicompactness}
  We say that $A \in L(B)$ is quasi-compact if there exists $A_1,A_2 \in L(B)$ such that $A = A_1 + A_2$, $\rank(A_1) < \infty$, $A_1 A_2 = A_2 A_1 = 0$ and $\rho(A_2) < \rho(A_1) = \rho(A)$. We say that $A$ is quasi-compact of diagonal type if $A_1$ is diagonalisable, and that $A$ is a simple quasi-compact operator if it is of diagonal type and $\rank(A_1) = 1$.
\end{definition}
If $A$ is quasi-compact of diagonal type then there exists a decomposition of $A$ of the form
\begin{equation*}
  A = \sum_{i=1}^s \lambda_i \Pi_i + N,
\end{equation*}
where each $\lambda_i$ is distinct and satisfies $\abs{\lambda_i} = \rho(A)$, each $\Pi_i$ is a finite-rank projection such that $\Pi_i \Pi_j = 0$ whenever $i \ne j$, and $N$ is a bounded operator such that $\rho(N) < \rho(A)$ and $N\Pi_i = \Pi_i N = 0$ for each $i$. We refer to such a decomposition as the \emph{quasi-compact decomposition of $A$}.

\subsection{Spectral stability of quasi-compact operators}
\label{sec:qc}
We review the spectral stability theory for quasi-compact operators from \cite{keller1999stability}.
Let $\wnorm{\cdot}$ be another norm on $B$ such that $\wnorm{\cdot} \le \norm{\cdot}$ and the closed, unit ball in $(B, \norm{\cdot})$ is compact in the topology induced by $\wnorm{\cdot}$.
We denote the norm on $L((B,\norm{\cdot}), (B,\wnorm{\cdot}))$ by $\tnorm{\cdot}$; that is
\begin{equation*}
  \tnorm{A} = \sup_{\norm{f} = 1} \wnorm{Af}.
\end{equation*}
The relevant families of operators are those satisfying the following condition.
\begin{definition}\label{KLdefn}
  We say that a family of operators $\{ A_\epsilon \}_{\epsilon \ge 0} \subseteq L(B)$ satisfies the Keller-Liverani (KL) condition if it verifies each of the following conditions.
  \begin{enumerate}[label=(KL\arabic*)]
    \item \label{en:kl_conv} There exists a monotone upper-semicontinuous function $\tau: [0, \infty) \to [0, \infty)$ such that $\tnorm{A_\epsilon - A_0} \le \tau(\epsilon)$ and $\tau(\epsilon) > 0$ whenever $\epsilon > 0$, and $\tau(\epsilon) \to 0$ as $\epsilon \to 0$.
    \item \label{en:kl_l1_bound} There exists $C_1, K_1 > 0$ such that $\wnorm{A_\epsilon^n} \le C_1 K_1^n$ for every $\epsilon \ge 0$ and $n \in \N$.
    \item \label{en:kl_ly_bound} There exists $C_2,C_3, K_2 > 0$ and $\alpha \in (0,1)$ such that
    \begin{equation*}
      \norm{A_\epsilon^n f} \le C_2 \alpha^n \norm{f} + C_3 K_2^n \wnorm{f}
    \end{equation*}
    for every $\epsilon \ge 0$, $f \in B$ and $n \in \N$.
  \end{enumerate}
\end{definition}

\begin{remark}
Denote the essential spectrum of $A$ by
\begin{equation*}
\esigma(A) = \left\{ \lambda \in \sigma(A) :
\begin{tabular}{c}
  $\lambda$ is not an eigenvalue of  $A$ with\\
  finite algebraic multiplicity
\end{tabular}\right\}.
\end{equation*}
If an individual operator $A$ satisfies \ref{en:kl_l1_bound} and \ref{en:kl_ly_bound} then its essential spectral radius $\erho(A)$ is bounded by $\alpha$ \cite{hennion1993theoreme,ionescu1950}. If, in addition, the spectral radius of $A$ is strictly greater than $\alpha$ then $A$ is quasi-compact.
\end{remark}

We will now summarise the conclusions of \cite{keller1999stability}.

\begin{proposition}[\cite{keller1999stability}]\label{prop:keller_liverani_summary}
  Let $\{ A_\epsilon \}_{\epsilon \ge 0} \subseteq L(B)$ satisfy (KL), where $A_0$ is a simple quasi-compact operator with decomposition $A_0 = \lambda_0 \Pi_0 + N_0$ and $\alpha < \abs{\lambda_0}$. For sufficiently small $\delta > 0$ and each $r$ such that $\max\{\alpha,\rho(N_0)\} < r < \abs{\lambda_0}$ there exists $\epsilon_{\delta,r} > 0$ such that $A_\epsilon$ is a simple quasi-compact operator with decomposition $A_\epsilon = \lambda_\epsilon \Pi_\epsilon + N_\epsilon$ whenever $\epsilon \in [0, \epsilon_{\delta,r}]$. Furthermore, for each $\epsilon \in [0, \epsilon_{\delta,r}]$ the spectral data of $A_\epsilon$ satisfies
  \begin{equation*}
    \lambda_\epsilon \in B_\delta(\lambda_0), \text{ and } \rho(N_\epsilon) < r,
  \end{equation*}
  in addition to the following H{\"o}lder estimate: there exists $C$ such that for all $\epsilon$ sufficiently small one has
  \begin{equation*}
    \max\left\{\abs{\lambda_\epsilon - \lambda_0}, \tnorm{\Pi_\epsilon - \Pi_0}, \tnorm{N_\epsilon - N_0}\right \} \le C\tau(\epsilon)^\eta,
  \end{equation*}
  where $\eta := \frac{\log(r / \alpha)}{\log(\max\{K_1, K_2\}/ \alpha)}$.
\end{proposition}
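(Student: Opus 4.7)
The plan is to follow the Keller--Liverani strategy, whose core is to extract resolvent estimates in the triple norm from the Lasota--Yorke inequality \ref{en:kl_ly_bound} and the weak-norm approximation \ref{en:kl_conv}, and then to read off the spectral data via Dunford contour integrals. First I would combine \ref{en:kl_l1_bound} and \ref{en:kl_ly_bound} to produce a uniform-in-$\epsilon$ bound of the form $\norm{A_\epsilon^n f} \le C_4 \max\{K_1, K_2\}^n \norm{f}$ by iterating the Lasota--Yorke inequality and absorbing the weak-norm term. The Hennion--Ionescu-Tulcea theorem then gives $\erho(A_\epsilon) \le \alpha$ for every $\epsilon$, so that outside the disk of radius $\alpha$ the spectrum of each $A_\epsilon$ consists of finitely many eigenvalues of finite multiplicity.

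The heart of the proof is the following quantitative resolvent estimate: for any $r \in (\alpha, \abs{\lambda_0})$ there is a $\delta$-neighbourhood structure around $\lambda_0$ such that for $z$ on a suitable contour outside $B_r(0)$ and away from $\sigma(A_0)\setminus\{\lambda_0\}$,
\begin{equation*}
  \tnorm{(z - A_\epsilon)^{-1} - (z - A_0)^{-1}} \le C\,\tau(\epsilon)^\eta,\qquad \eta = \frac{\log(r/\alpha)}{\log(\max\{K_1,K_2\}/\alpha)} .
\end{equation*}
Naively one uses the resolvent identity $(z - A_\epsilon)^{-1} - (z - A_0)^{-1} = (z - A_\epsilon)^{-1}(A_\epsilon - A_0)(z - A_0)^{-1}$ together with $\tnorm{A_\epsilon - A_0} \le \tau(\epsilon)$, but this only delivers a bound in the strong operator norm which is not uniformly controlled. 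The Keller--Liverani trick is instead to expand $(z - A_\epsilon)^{-1} = \sum_{k=0}^{N-1} z^{-k-1} A_\epsilon^k + z^{-N}(z-A_\epsilon)^{-1} A_\epsilon^N$, estimate each term using \ref{en:kl_conv}, \ref{en:kl_l1_bound}, \ref{en:kl_ly_bound}, and then optimise $N$ as a function of $\tau(\epsilon)$; the exponent $\eta$ is exactly what this optimisation yields.

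Once the resolvent estimate is in hand, choose a small circle $\gamma_\delta$ around $\lambda_0$ separating it from the rest of $\sigma(A_0)$ and from the disk of radius $r$. Define the spectral projection $\Pi_\epsilon := \frac{1}{2\pi i}\int_{\gamma_\delta}(z - A_\epsilon)^{-1}\,dz$. Integrating the above estimate over $\gamma_\delta$ gives $\tnorm{\Pi_\epsilon - \Pi_0} \le C'\,\tau(\epsilon)^\eta$, and since $\Pi_0$ has rank one and the range of $\Pi_\epsilon$ is close to it in $\wnorm{\cdot}$, a standard compactness/perturbation argument forces $\mathrm{rank}(\Pi_\epsilon) = 1$ for all sufficiently small $\epsilon$. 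Consequently $A_\epsilon$ has a single isolated eigenvalue $\lambda_\epsilon$ inside $\gamma_\delta$, lying in $B_\delta(\lambda_0)$, and the identity $A_\epsilon \Pi_\epsilon = \lambda_\epsilon \Pi_\epsilon$ (applied to a fixed test element on which $\Pi_0$ does not vanish) transfers the H{\"o}lder rate to $\abs{\lambda_\epsilon - \lambda_0}$. Setting $N_\epsilon := A_\epsilon - \lambda_\epsilon \Pi_\epsilon$ and repeating the contour argument on a circle of radius $r$ enclosing the remaining spectrum of $A_0$ yields $\rho(N_\epsilon) < r$ together with $\tnorm{N_\epsilon - N_0} \le C''\,\tau(\epsilon)^\eta$.

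The main obstacle is clearly the resolvent-difference estimate in the triple norm: one is mixing two topologies (the strong norm controls iterates via Lasota--Yorke, the weak norm controls the perturbation size), and the Hölder exponent $\eta$ emerges only after a careful balancing of the Neumann-type expansion length $N$ against $\tau(\epsilon)$. Every other step — rank stability of $\Pi_\epsilon$, extraction of $\lambda_\epsilon$, control of $\rho(N_\epsilon)$ via a second contour — is then a formal consequence of holomorphic functional calculus and the rank-one structure of $\Pi_0$.
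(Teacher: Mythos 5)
The paper does not prove this proposition; it is explicitly presented as a summary of the main theorems of \cite{keller1999stability}, so there is no ``paper's own proof'' to compare against. Taking the proposal as a blind reconstruction of the Keller--Liverani argument, you have correctly identified the load-bearing structure: the observation that the naive resolvent identity is insufficient because $\wnorm{(z-A_\epsilon)^{-1}}$ is not controlled, the Neumann-type expansion of the resolvent, and the optimisation of the truncation length $N$ against $\tau(\epsilon)$ that yields the exponent $\eta$ are exactly the engine of the original proof, and the downstream steps (Dunford contour for $\Pi_\epsilon$, rank stability via $\tnorm{\cdot}$-closeness plus compactness of the unit ball, extraction of $\lambda_\epsilon$ by testing $\Pi_\epsilon$ on a vector where $\Pi_0$ is nondegenerate, second contour for $N_\epsilon$) are in the right order.

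One small imprecision worth noting: in your first step you write that (KL2) and (KL3) together give $\norm{A_\epsilon^n} \le C_4\max\{K_1,K_2\}^n$. The Lasota--Yorke inequality alone already gives $\norm{A_\epsilon^n}\le (C_2+C_3)\max\{\alpha,K_2\}^n$ without invoking (KL2). The reason $\max\{K_1,K_2\}$ appears in $\eta$ is subtler: it arises when telescoping $A_\epsilon^k - A_0^k = \sum_j A_\epsilon^j(A_\epsilon-A_0)A_0^{k-1-j}$ and estimating $\tnorm{A_\epsilon^j(A_\epsilon-A_0)A_0^{k-1-j}} \le \wnorm{A_\epsilon^j}\,\tnorm{A_\epsilon-A_0}\,\norm{A_0^{k-1-j}}$, which genuinely mixes the weak-norm growth $K_1$ from (KL2) with the strong-norm growth $K_2$ from (KL3). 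Placing that mixing where it actually occurs would make the derivation of the exponent cleaner, but the overall reconstruction is faithful.
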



\subsection{Spectral stability of twisted quasi-compact operators}
\label{sec:twistedqc}

We begin by defining an abstract twist.
\begin{definition}[Twist]\label{def:twist}
  If $M: D \to L(B)$ is analytic on an open neighbourhood $D \subseteq \C$ of 0 and $M(0)$ is the identity, then we call $M$ a \emph{twist}. If $A \in L(B)$ then the operators $A(z) := A M(z)$ are said to be \emph{twisted} by $M$. We say that $M$ is compactly $\wnorm{\cdot}$-bounded if for every compact $V \subseteq D$ there exists $C_{M,V} > 0$ such that
  \begin{equation*}
    \sup_{z \in V} \wnorm{M(z)} \le C_{M,V}.
  \end{equation*}
\end{definition}

For each $r > 0$ let $D_r = \{ z \in \C : \abs{z} < r \}$. Our first main result roughly speaking, says that one can `uniformly extend' the spectral stability of a family of operators satisfying (KL) to the corresponding twisted family of operators in some neighbourhood of $0$, provided the twist is compatible with $\wnorm{\cdot}$.

\begin{theorem}\label{theorem:convergence_of_eigendata_derivs}
  Let $\{ A_\epsilon \}_{\epsilon \ge 0}$ satisfy (KL), where $A_0$ is a simple quasi-compact operator with leading eigenvalue $\lambda_0$ satisfying $\alpha < \abs{\lambda_0}$, and let $M : D \to \C$ be a compactly $\wnorm{\cdot}$-bounded twist. Then there exists $\theta > 0$ such that for every compact $V \subseteq D_\theta$ there exists $\epsilon_V > 0$ and, for each $\epsilon \in [0, \epsilon_V]$, analytic functions\footnote{When we say a map is analytic on an arbitrary compact subset $V$ of $\C$, we mean that it may be extended to an analytic map on some larger open subset of $\C$.} $\lambda_\epsilon(\cdot): V \to \C$, $\Pi_\epsilon(\cdot): V \to L(B)$, and $N_\epsilon(\cdot) : V \to L(B)$ such that $A_\epsilon(z)$ is a simple quasi-compact operator with decomposition $A_\epsilon(z) = \lambda_\epsilon(z) \Pi_\epsilon(z) + N_\epsilon(z)$ whenever $z \in V$. Additionally, the derivatives of all orders of the twisted spectral data satisfy the following uniform H{\"o}lder estimate: there exists $\eta(V)$, and, for each $n \in \N$, a constant $O_n$ such that for all $z \in V$ and sufficiently small $\epsilon$ one has
  \begin{equation*}
    \max\left\{ \begin{array}{c} \abs{\lambda_\epsilon^{(n)}(z) - \lambda_\epsilon^{(n)}(0)}, \tnorm{\Pi_\epsilon^{(n)}(z) - \Pi_\epsilon^{(n)}(0)}, \\
    \tnorm{N_\epsilon^{(n)}(z) - N_\epsilon^{(n)}(0)} \end{array} \right\} \le O_n\tau(\epsilon)^{\eta(V)}.
  \end{equation*}
\end{theorem}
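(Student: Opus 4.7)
The displayed bound $|\lambda_\epsilon^{(n)}(z)-\lambda_\epsilon^{(n)}(0)|\le O_n\tau(\epsilon)^{\eta(V)}$ cannot hold as written: sending $\epsilon\to 0$ with $z\in V$ fixed forces $\lambda_0^{(n)}(z)=\lambda_0^{(n)}(0)$ for every $z\in V$ and every $n\in\N$, which would make $\lambda_0(\cdot)$ constant on $V$ and contradict the intended applications (where $\lambda_0(z)$ is a pressure-type function). I read this as a typographical slip for $|\lambda_\epsilon^{(n)}(z)-\lambda_0^{(n)}(z)|\le O_n\tau(\epsilon)^{\eta(V)}$ (and analogously for $\Pi$ and $N$), in keeping with Theorem~A and the role of $\tau(\epsilon)$ as the perturbation modulus; I sketch the proof of this intended bound.

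\textbf{Plan (uniform KL for the twisted family).} The first task is to show that, for some $\theta>0$ and any compact $V\subseteq D_\theta$, the two-parameter family $\{A_\epsilon(z)=A_\epsilon M(z)\}$ satisfies a uniform version of (KL) over $(\epsilon,z)\in[0,\epsilon_0]\times V$. Compact $\wnorm{\cdot}$-boundedness of $M$ yields $C_{M,V}:=\sup_{z\in V}\wnorm{M(z)}<\infty$, and analyticity of $M$ as an $L(B)$-valued map provides $B_{M,V}:=\sup_{z\in V}\norm{M(z)}<\infty$. Combining these with \ref{en:kl_conv}--\ref{en:kl_ly_bound} for $\{A_\epsilon\}$ one deduces $\tnorm{A_\epsilon(z)-A_0(z)}\le B_{M,V}\tau(\epsilon)$ together with modified versions of \ref{en:kl_l1_bound} and \ref{en:kl_ly_bound} for $A_\epsilon(z)^n=(A_\epsilon M(z))^n$ whose constants depend on $C_{M,V}$, $B_{M,V}$ and hold uniformly in $z\in V$. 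Applying Proposition~\ref{prop:keller_liverani_summary} to the $\epsilon$-family at each fixed $z\in V$, with base point $A_0(z)$ (simple quasi-compact by analytic perturbation of $A_0$ for small $z$), produces analytic spectral data $\lambda_\epsilon(z),\Pi_\epsilon(z),N_\epsilon(z)$ together with the pointwise estimate
\[
\max\{|\lambda_\epsilon(z)-\lambda_0(z)|,\tnorm{\Pi_\epsilon(z)-\Pi_0(z)},\tnorm{N_\epsilon(z)-N_0(z)}\}\le C_V\tau(\epsilon)^{\eta(V)},
\]
uniform in $z\in V$ because the KL constants for the $\epsilon$-family depend continuously on $z$ and $V$ is compact.

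\textbf{Passing to derivatives via Cauchy.} Kato--Rellich perturbation theory together with the previous step shows that $\lambda_\epsilon(\cdot),\Pi_\epsilon(\cdot),N_\epsilon(\cdot)$ extend analytically to a common open neighborhood $V'\supset V$ for all $\epsilon\in[0,\epsilon_V]$; take $\rho=\tfrac12\dist(V,\partial V')>0$. Applying the uniform estimate on $V'$ (by enlarging $V$ slightly at the outset), the $L(B)$-valued map $w\mapsto\Pi_\epsilon(w)-\Pi_0(w)$ is analytic on $V'$ and $\tnorm{\cdot}$-bounded by $C_{V'}\tau(\epsilon)^{\eta(V')}$ there. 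Cauchy's integral formula
\[
\Pi_\epsilon^{(n)}(z)-\Pi_0^{(n)}(z)=\frac{n!}{2\pi i}\oint_{|w-z|=\rho}\frac{\Pi_\epsilon(w)-\Pi_0(w)}{(w-z)^{n+1}}\,dw
\]
then yields $\tnorm{\Pi_\epsilon^{(n)}(z)-\Pi_0^{(n)}(z)}\le\frac{n!}{\rho^n}C_{V'}\tau(\epsilon)^{\eta(V')}=:O_n\tau(\epsilon)^{\eta(V)}$ uniformly for $z\in V$; the same argument with a scalar- or operator-valued integrand treats $\lambda$ and $N$.

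\textbf{Main obstacle.} The substantive work is the uniform (KL) of Step~1, specifically propagating the Lasota--Yorke bound \ref{en:kl_ly_bound} through iterated compositions $(A_\epsilon M(z))^n$ uniformly in $z$. One must use both the analytic decay $M(z)=I+O(z)$ and compact $\wnorm{\cdot}$-boundedness to keep the Lasota--Yorke constants controlled while preserving the quasi-compactness threshold $\alpha$ -- this is precisely where the hypothesis on $M$ plays its essential role. Once the uniform (KL) is in hand, the derivative estimates reduce to a single Cauchy-formula calculation and the existence/analyticity statements follow from Proposition~\ref{prop:keller_liverani_summary}.
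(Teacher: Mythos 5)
Your reading of the displayed estimate as a typo for $\abs{\lambda_\epsilon^{(n)}(z)-\lambda_0^{(n)}(z)}$ (and likewise for $\Pi$ and $N$) is correct: the paper's proof establishes exactly $\sup_{z}\abs{\lambda_0(z)-\lambda_\epsilon(z)}\le O_0''\tau(\epsilon)^{\eta(V)}$ and its analogues, and Theorem A states the convergence as $\lambda_\epsilon^{(n)}(\cdot)\to\lambda_0^{(n)}(\cdot)$. Your overall route --- uniform (KL) for the twisted two-parameter family, quasi-compactness of $A_0(z)$ by analytic perturbation theory, a $z$-uniform application of Keller--Liverani, then Cauchy's integral formula on a slightly larger set to pass to derivatives --- is the same as the paper's, and your identification of the iterated Lasota--Yorke bound for $(A_\epsilon M(z))^n$ as the place where the hypotheses on $M$ are spent matches the paper's Lemma \ref{prop:small_lasota_yorke}.

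There is, however, one genuine gap in your Step 1. Having the (KL) data $(\tau_V,C_i,K_i,\alpha)$ uniform over $z\in V$ is not by itself enough to make the Keller--Liverani threshold $\epsilon_{\delta,r}$ and H\"older constant uniform in $z$: those quantities also depend on the norm of the resolvent of the \emph{unperturbed} operator $A_0(z)$ away from its spectrum, i.e.\ on $J_{\delta,r}(A_0(z))=\sup\{\norm{(\omega-A_0(z))^{-1}}:\omega\in\C\setminus V_{\delta,r}(A_0(z))\}$, and this is not one of the (KL) constants. Your justification that ``the KL constants for the $\epsilon$-family depend continuously on $z$ and $V$ is compact'' does not cover it, and the bound is not automatic from continuity of $z\mapsto A_0(z)$ alone, since the resolvent can degenerate as $\omega$ approaches $\sigma(A_0(z))$ and the excluded region $V_{\delta,r}(A_0(z))$ itself varies with $z$. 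The paper isolates this as a separate step (Lemma \ref{lemma:twisted_resolvent_bound}), proved by splitting the resolvent as $(\omega-\lambda_0(z))^{-1}\Pi_0(z)+S(\omega,z)$ and bounding the reduced resolvent $S(\omega,z)=(\omega-N_0(z))^{-1}(\Id-\Pi_0(z))$ by a Neumann series, using $r>\sup_{z}\rho(N_0(z))$ and the analyticity of $\Pi_0(\cdot)$ on the compact set. With that lemma supplied, the remainder of your argument --- including the final Cauchy-formula step, which is exactly how the paper obtains the estimates for all derivative orders from the $n=0$ case --- goes through.
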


\begin{remark}
  We have only considered the case of stability of a leading simple `twisted' eigenvalue, as it is a substantial simplification and all that we require in applications. We anticipate that analogous results would hold, \emph{mutatis mutandis}, for any eigenvalue outside the essential spectrum of finite algebraic multiplicity.
\end{remark}

Let us describe the strategy for proving Theorem \ref{theorem:convergence_of_eigendata_derivs}.
Firstly, using the fact that $\{A_\epsilon\}_{\epsilon\ge 0}$ satisfies (KL), we show that there exists $\psi > 0$ such that $\{A_\epsilon(z)\}_{\epsilon \ge 0}$ satisfies (KL) uniformly in $z$ on compact subsets of $D_\psi$. In our setting standard arguments \cite{RE83,nagaev1957,kato1966perturbation,HennionHerve} imply that $A_0(z)$ is a simple quasi-compact operator on some $D_\theta$, where we may also assume that $\theta \in (0, \psi]$. Using a technical lemma concerning the boundedness of the resolvents of $A_0(z)$ on compact subsets of $D_\theta$, we then apply theory of \cite{keller1999stability} to obtain a uniform version of Proposition \ref{prop:keller_liverani_summary} for the family of operators $\{A_\epsilon(z)\}_{\epsilon \ge 0}$ when $z \in V$. Theorem \ref{theorem:convergence_of_eigendata_derivs} immediately follows.

\subsubsection{Proof of Theorem \ref{theorem:convergence_of_eigendata_derivs}}

\paragraph{Step 1: Verification of \ref{en:kl_conv}.}

\begin{lemma}\label{prop:twisted_KL1}
  Suppose $\{A_\epsilon\}_{\epsilon \ge 0}$ satisfies (KL), $M: D \to L(B)$ is a twist and $V \subseteq D$ is compact.
  Let $\tau_V:[0,\infty) \to [0, \infty)$ be defined by
  \begin{equation*}
    \tau_V(\epsilon) = \left(\sup_{z \in V} \norm{M(z)}\right) \tau(\epsilon).
  \end{equation*}
  Then $\tau_V$ is an upper-semicontinuous function, and \ref{en:kl_conv} holds for $\{A_\epsilon(z)\}_{\epsilon \ge 0}$ for every $z \in V$ with $\tau_V$ in place of $\tau$.
  \begin{proof}
    Note that $\tau_V$ is finite as $V$ is compact and $M$ is continuous on $V$.
    For each $\epsilon > 0$ and $z \in V$ the definition of $\tnorm{\cdot}$ implies that $\tnorm{A_\epsilon(z) - A_0(z)} \le \tnorm{A_\epsilon - A_0}\norm{M(z)}$, and so using \ref{en:kl_conv} we find that
    \begin{equation*}
      \sup_{z \in V} \tnorm{A_\epsilon(z) - A_0(z)} \le \tnorm{A_\epsilon - A_0}\left(\sup_{z \in V}\norm{M(z)}\right) \le \tau_V(\epsilon),
    \end{equation*}
    as required.
  \end{proof}
\end{lemma}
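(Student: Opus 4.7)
The plan is to reduce the (KL1) condition for the twisted family directly to the one already assumed for $\{A_\epsilon\}_{\epsilon\ge 0}$, by exploiting the factorisation $A_\epsilon(z) - A_0(z) = (A_\epsilon - A_0) M(z)$. Finiteness and upper-semicontinuity of $\tau_V$ should fall out almost immediately: since $M$ is analytic into $(L(B), \norm{\cdot})$, the map $z \mapsto \norm{M(z)}$ is continuous and hence bounded on the compact set $V$, so $C_V := \sup_{z \in V} \norm{M(z)}$ is a finite positive constant. Then $\tau_V = C_V \cdot \tau$ inherits monotonicity, upper-semicontinuity, and the property $\tau_V(\epsilon) \to 0$ as $\epsilon \to 0$ directly from $\tau$, with strict positivity for $\epsilon > 0$ preserved as well.

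The content of the lemma is then the submultiplicativity estimate $\tnorm{PQ} \le \tnorm{P}\,\norm{Q}$ for $P, Q \in L(B)$. This is the one item worth pausing on, but it is essentially immediate from the mixed-norm definition of $\tnorm{\cdot}$: for any $f$ with $\norm{f} = 1$, one has $\wnorm{PQf} \le \tnorm{P}\,\norm{Qf} \le \tnorm{P}\,\norm{Q}$, giving the inequality after taking suprema. Applying this with $P = A_\epsilon - A_0$ and $Q = M(z)$, combined with \ref{en:kl_conv} for the untwisted family, yields the uniform bound
\begin{equation*}
  \sup_{z \in V} \tnorm{A_\epsilon(z) - A_0(z)} \;\le\; \tnorm{A_\epsilon - A_0} \sup_{z \in V} \norm{M(z)} \;\le\; C_V\, \tau(\epsilon) \;=\; \tau_V(\epsilon),
\end{equation*}
which is (KL1) for $\{A_\epsilon(z)\}_{\epsilon \ge 0}$ uniformly in $z \in V$.

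There is no real obstacle to overcome: the lemma is a routine check that decoupling the $\epsilon$- and $z$-dependencies via the product $A_\epsilon \cdot M(z)$ converts a uniform bound in $\epsilon$ for the untwisted family into a uniform bound in $(\epsilon, z)$ for the twisted one. The only place where care is needed is ensuring that the right-hand factor $M(z)$ is controlled in the \emph{strong} norm $\norm{\cdot}$ (not just $\wnorm{\cdot}$), which is why the hypothesis uses $\sup_{z \in V}\norm{M(z)}$ in the definition of $\tau_V$ rather than the compactly $\wnorm{\cdot}$-bounded hypothesis on $M$; the latter will instead be the crucial ingredient when verifying \ref{en:kl_l1_bound} for the twisted family in a subsequent step.
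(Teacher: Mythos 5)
Your proof is correct and follows the same approach as the paper: factor $A_\epsilon(z) - A_0(z) = (A_\epsilon - A_0)M(z)$, use the mixed-norm submultiplicativity $\tnorm{PQ}\le\tnorm{P}\,\norm{Q}$, and take suprema over $z\in V$. Your additional remarks — spelling out why the submultiplicativity holds, and noting that upper-semicontinuity and the limit $\tau_V(\epsilon)\to 0$ are inherited from $\tau$ by multiplication by the finite constant $C_V$ — fill in details the paper leaves implicit, but the argument is the same.
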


\paragraph{Step 2: Verification of \ref{en:kl_l1_bound}.}
\begin{lemma}\label{prop:twisted_KL2}
  If $\{A_\epsilon\}_{\epsilon \ge 0}$ satisfies (KL), $M: D \to L(B)$ is a compactly $\wnorm{\cdot}$-bounded twist, and $V \subseteq D$ is compact, then there exists $K_{1,V} > 0$ such that for every $\epsilon \ge 0$ and $n \in \N$ we have
  \begin{equation*}
    \sup_{z \in V} \wnorm{A_\epsilon(z)^n} \le K_{1,V}^n.
  \end{equation*}
  In particular, \ref{en:kl_l1_bound} holds for $\{A_\epsilon(z)\}_{\epsilon \ge 0}$ for every $z \in V$.
  \begin{proof}
    As $M$ is compactly $\wnorm{\cdot}$-bounded there exists $C_{M,V} > 0$ such that
    \begin{equation*}
      \sup_{z \in V} \wnorm{M(z)} \le C_{M,V}.
    \end{equation*}
    Set $K_{1,V} = C_1 K_1 C_{M,V}$. Then for each $z \in V$, $n \in N$ and $\epsilon \ge 0$ we have
    \begin{equation*}
      \wnorm{A_\epsilon(z)^n} \le \wnorm{A_\epsilon}^n\wnorm{M(z)}^n \le (C_1 K_1)^n C_{M,V}^n = K_{1,V}^n.
    \end{equation*}
  \end{proof}
\end{lemma}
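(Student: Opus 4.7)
The plan is to reduce the $n$-step estimate to a product of one-step estimates, using submultiplicativity of the operator $\wnorm{\cdot}$-norm together with the two uniform bounds supplied by the hypotheses.

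First I would read off a uniform bound on the twist: the compactly $\wnorm{\cdot}$-bounded hypothesis immediately provides $C_{M,V} > 0$ with $\sup_{z\in V}\wnorm{M(z)}\le C_{M,V}$. Next I would extract a uniform one-step bound on $A_\epsilon$ by applying \ref{en:kl_l1_bound} with $n=1$, which yields $\wnorm{A_\epsilon} \le C_1 K_1$ for every $\epsilon \ge 0$. With both uniform bounds in hand, I would write $A_\epsilon(z)^n = (A_\epsilon M(z))^n$ and invoke submultiplicativity of the operator $\wnorm{\cdot}$-norm to get
\begin{equation*}
\wnorm{A_\epsilon(z)^n} \le \wnorm{A_\epsilon M(z)}^n \le \wnorm{A_\epsilon}^n \wnorm{M(z)}^n \le (C_1 K_1 C_{M,V})^n.
\end{equation*}
Setting $K_{1,V} := C_1 K_1 C_{M,V}$ then delivers the stated estimate uniformly in $z \in V$, $\epsilon \ge 0$, and $n \in \N$. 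The ``in particular'' clause follows at once by reading off constants $C_1' = 1$, $K_1' = K_{1,V}$ in \ref{en:kl_l1_bound} for the twisted family $\{A_\epsilon(z)\}_{\epsilon\ge 0}$.

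There is no real obstacle: because we iterate the composite operator $A_\epsilon M(z)$ as a single block, we never need to analyse any interaction between $A_\epsilon$ and $M(z)$, and no commutator or interleaving estimate is required. The only mildly subtle point is that \ref{en:kl_l1_bound} controls $\wnorm{A_\epsilon^n}$ rather than directly bounding $\wnorm{A_\epsilon}^n$, but since only the $n=1$ case is needed the worst consequence is a slightly sub-optimal constant. This matches the role of the lemma as Step 2 in the proof of Theorem \ref{theorem:convergence_of_eigendata_derivs}: it records that the twisted family inherits a clean geometric $\wnorm{\cdot}$-growth bound uniformly on compact twist sets, which is exactly what is needed to feed the theory of \cite{keller1999stability} for each fixed $z \in V$ in the subsequent steps.
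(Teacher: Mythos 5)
Your proof is correct and follows essentially the same route as the paper's: extract $\wnorm{A_\epsilon}\le C_1K_1$ from \ref{en:kl_l1_bound} with $n=1$, combine with $\sup_{z\in V}\wnorm{M(z)}\le C_{M,V}$, and apply submultiplicativity of the $\wnorm{\cdot}$-operator norm to $(A_\epsilon M(z))^n$. The only cosmetic difference is that you make the use of the $n=1$ case of \ref{en:kl_l1_bound} explicit, whereas the paper elides it.
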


\paragraph{Step 3: Verification of \ref{en:kl_ly_bound}.}

\begin{lemma}\label{prop:small_lasota_yorke}
  Under the hypotheses of Theorem \ref{theorem:convergence_of_eigendata_derivs} there exists $\psi > 0$, $\alpha_\psi \in (0,1)$ and $C_{2,\psi},C_{3,\psi},K_{2,\psi} > 0$ such that
  \begin{equation*}
    \norm{A_\epsilon(z)^n f} \le C_{2,\psi} \alpha_\psi^n \norm{f} + C_{3,\psi} K_{2,\psi}^n \wnorm{f}
  \end{equation*}
  for every $z \in D_{\psi}$, $f \in B$, and $n \in \N$.
  \begin{proof}
    Fix $m$ sufficiently large so that $C_2 \alpha^m < 1/2$. Using \ref{en:kl_ly_bound} for $\{A_\epsilon\}_{\epsilon \ge 0}$ yields
    \begin{equation}\begin{split}\label{eq:small_lasota_yorke_1}
      \norm{A_\epsilon(z)^m f} &\le \norm{A_\epsilon^m f} + \norm{A_\epsilon(z)^m - A_\epsilon^m}\norm{f} \\
      &\le \left(1/2 + \norm{A_\epsilon(z)^m - A_\epsilon^m}\right)\norm{f} + C_3 K^m \wnorm{f}.
    \end{split}\end{equation}
    As
    \begin{equation*}
      A_\epsilon(z)^m - A_\epsilon^m = \sum_{k=0}^{m-1} A_\epsilon(z)^{k} (A_\epsilon(z) - A_\epsilon) A_\epsilon(z)^{m-1-k},
    \end{equation*}
    we may apply \ref{en:kl_ly_bound} again to obtain
    \begin{equation}\begin{split}\label{eq:small_lasota_yorke_2}
      \lvert A_\epsilon(z)^m& - A_\epsilon^m\rvert \le \sum_{k=0}^{m-1} \norm{A_\epsilon}^{k+1} \norm{M(z) - M(0)} \norm{A_\epsilon(z)}^{m-1-k}\\
      &\le \norm{M(z) - M(0)} \sum_{k=0}^{m-1} (C_2 \alpha + C_3 K)^m \norm{M(z)}^{m-1-k}.
    \end{split}\end{equation}
    Since the right-hand side of \eqref{eq:small_lasota_yorke_2} is continuous in $z$ and vanishes at $z=0$, there exists $\psi > 0$ such that
    \begin{equation*}
      \sup_{z \in D_\psi} \sup_{\epsilon \ge 0} \norm{A_\epsilon(z)^m - A_\epsilon^m} \le 1/4.
    \end{equation*}
    Applying this to \eqref{eq:small_lasota_yorke_1}, for each $\epsilon \ge 0$ and $z \in D_\psi$ we have
    \begin{equation*}
      \norm{A_\epsilon(z)^m f} \le (3/4) \norm{f} + C_3 K^m\wnorm{f}.
    \end{equation*}
    We can use Lemma \ref{prop:twisted_KL2} to iterate this inequality, obtaining \ref{en:kl_ly_bound} for $\{A_\epsilon(z)^m\}_{\epsilon \ge 0}$ with uniform coefficients. Standard arguments imply that \ref{en:kl_ly_bound} also holds for $\{A_\epsilon(z)\}_{\epsilon \ge 0}$ with suitable modified coefficients.
  \end{proof}
\end{lemma}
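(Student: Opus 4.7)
The goal is to extract a uniform Lasota-Yorke inequality for the twisted operators $A_\epsilon(z)$ on some disk $D_\psi$, starting from the untwisted inequality \ref{en:kl_ly_bound}. The strategy I would follow is a standard ``perturb the iterate and re-iterate'' argument: find a large power $m$ for which $A_\epsilon^m$ already contracts the $\norm{\cdot}$-norm with constant $1/2$, then show the twisted power $A_\epsilon(z)^m$ is a small $\norm{\cdot}$-perturbation of $A_\epsilon^m$ when $z$ is close to $0$, uniformly in $\epsilon$, and finally iterate the resulting inequality.

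First I would record a uniform operator-norm bound on $A_\epsilon$. Applying \ref{en:kl_ly_bound} with $n=1$ and using $\wnorm{\cdot}\le\norm{\cdot}$ gives $\norm{A_\epsilon}\le C_2\alpha+C_3 K_2=:M_\star$, independently of $\epsilon$. Similarly $\norm{A_\epsilon(z)}\le M_\star\norm{M(z)}$ is uniformly bounded for $z$ in any fixed compact neighbourhood of $0$. Now fix $m$ with $C_2\alpha^m<1/2$ and telescope
\begin{equation*}
A_\epsilon(z)^m-A_\epsilon^m=\sum_{k=0}^{m-1}A_\epsilon(z)^k(A_\epsilon(z)-A_\epsilon)A_\epsilon^{m-1-k}.
\end{equation*}
Since $A_\epsilon(z)-A_\epsilon=A_\epsilon(M(z)-\Id)$, the strong norm of each summand is dominated by $\norm{A_\epsilon(z)}^k\norm{A_\epsilon}^{m-k}\norm{M(z)-\Id}$, and the uniform bound on $\norm{A_\epsilon}$ and $\norm{A_\epsilon(z)}$, combined with analyticity of $M$ at $0$ with $M(0)=\Id$, lets me pick $\psi>0$ so that $\norm{A_\epsilon(z)^m-A_\epsilon^m}\le 1/4$ for all $\epsilon\ge 0$ and $z\in D_\psi$.

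Combining with \ref{en:kl_ly_bound} at time $m$ yields, for every such $\epsilon$ and $z$,
\begin{equation*}
\norm{A_\epsilon(z)^m f}\le\tfrac{3}{4}\norm{f}+C_3 K_2^m\wnorm{f}.
\end{equation*}
This is a Lasota-Yorke inequality for the single operator $A_\epsilon(z)^m$, with constants independent of $\epsilon$ and $z\in D_\psi$. Iterating it $j$ times and using Lemma \ref{prop:twisted_KL2} (applied to the bounded family $\{A_\epsilon(z)\}$) to bound the weak norms of intermediate iterates gives $\norm{A_\epsilon(z)^{jm}f}\le(3/4)^j\norm{f}+C'K_{1,V}^{jm}\wnorm{f}$ for a suitable $C'$. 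Filling in the remaining exponents $jm+r$ with $0\le r<m$ by one more application of the uniform operator bound $\norm{A_\epsilon(z)^r}\le(M_\star\sup_{z\in D_\psi}\norm{M(z)})^r$ produces the stated inequality with $\alpha_\psi=(3/4)^{1/m}\in(0,1)$ and constants $C_{2,\psi},C_{3,\psi},K_{2,\psi}$ absorbing the bookkeeping.

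The main obstacle, and what dictates the order of operations, is that a naive bound on a single step $\norm{A_\epsilon(z)f}$ only improves on $\norm{A_\epsilon f}$ after we have already contracted; one cannot just ``add a small perturbation'' to the one-step inequality because its contraction constant $C_2\alpha$ may not be strictly less than $1$. Passing to the iterate $m$ is essential, and the only delicate point is making the choice of $m$ and then of $\psi$ depend only on the universal constants in \ref{en:kl_ly_bound}, not on $\epsilon$. This is where the uniform bound $\norm{A_\epsilon}\le M_\star$ from \ref{en:kl_ly_bound}, rather than any perturbation hypothesis \ref{en:kl_conv}, is doing the real work.
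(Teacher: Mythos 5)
Your argument is correct and follows essentially the same route as the paper: fix $m$ with $C_2\alpha^m<1/2$, telescope $A_\epsilon(z)^m-A_\epsilon^m$, use the uniform operator bound $\norm{A_\epsilon}\le C_2\alpha+C_3K_2$ from \ref{en:kl_ly_bound} together with $M(0)=\Id$ to make the difference small on a disk $D_\psi$, and then iterate the resulting $m$-step Lasota--Yorke inequality (with Lemma \ref{prop:twisted_KL2} controlling the intermediate weak norms) before filling in the remainders. The only cosmetic difference is the choice of which side of the telescoping sum carries the twisted factors, which is immaterial.
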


We have now verified that under the hypotheses of Theorem \ref{theorem:convergence_of_eigendata_derivs} the families of operators $\{A_\epsilon(z)\}_{\epsilon \ge 0}$ satisfy (KL) uniformly in $z$ on every compact $V \subseteq D_\psi$.

\paragraph{Step 4: Quasi-compactness of $A_0(z)$.}

The following result is standard in the theory of analytic perturbations of linear operators \cite{RE83,nagaev1957,kato1966perturbation,HennionHerve}. We provide an outline of the proof in our setting.

\begin{lemma}\label{lemma:quasicompact_unperturbed_operator}
  Assume the hypotheses of Theorem \ref{theorem:convergence_of_eigendata_derivs}, and recall $\psi$ and $\alpha_\psi$ from Lemma \ref{prop:small_lasota_yorke}. There exists $\theta \in (0, \psi]$ and $\lambda_0(\cdot): D_{\theta} \to \C$, $\Pi_0(\cdot): D_{\theta} \to L(B)$ and $N_0(\cdot): D_{\theta} \to L(B)$ such that for each $z \in D_{\theta}$ the operator $A_0(z)$ has quasi-compact decomposition $A_0(z) = \lambda_0(z) \Pi_0(z) + N_0(z)$ and
  \begin{equation*}
    \max\left\{\alpha_\psi, \sup_{z \in D_\theta} \rho(N_0(z)) \right\} < \inf_{z \in D_{\theta}} \abs{\lambda_0(z)}.
  \end{equation*}
  \begin{proof}
    As $z \mapsto A_0(z)$ is analytic and $A_0$ is quasi-compact with decomposition $A_0 = \lambda_0 \Pi_0 + N_0$, it is standard that there exists $\theta > 0$, and analytic maps $\lambda_0(z): D_\theta \to \C$, $\Pi_0(z): D_\theta \to L(B)$ and $N_0(z): D_\theta \to L(B)$ such that $A_0(z)$ is a simple quasi-compact operator with decomposition $A_0(z) = \lambda_0(z) \Pi_0(z) + N_0(z)$ for each $z \in D_\theta$. By possibly shrinking $\theta$ we may assume that $\theta \le \psi$. Since $\lambda_0(\cdot)$ is analytic, we may shrink $\theta$ to guarantee that $\alpha_\psi < \inf_{z \in D_\theta} \abs{\lambda_0(z)}$. Furthermore, as the spectral radius is upper-semicontinuous as a function of the operator \cite[IV.3.1]{kato1966perturbation}, we may shrink $\theta$ more so that $\sup_{z \in D_\theta} \rho(N_0(z))< \inf_{z \in D_\theta} \abs{\lambda_0(z)}$.
  \end{proof}
\end{lemma}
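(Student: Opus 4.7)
The plan is to invoke the standard analytic perturbation theory of Kato, adapted to the quasi-compact setting. Since $M$ is a twist on $D$, the map $z \mapsto A_0(z) = A_0 M(z)$ is analytic from $D$ to $L(B)$ in the operator norm $\norm{\cdot}$. At $z=0$ the operator $A_0(0) = A_0$ is simple quasi-compact with decomposition $\lambda_0 \Pi_0 + N_0$, so $\lambda_0$ is an isolated eigenvalue of $A_0$ and $\lambda_0$ is separated from the rest of $\sigma(A_0) \subseteq \overline{B_{\rho(N_0)}(0)}$.

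The first concrete step is to fix a small positively oriented Jordan curve $\Gamma \subset \C \setminus \sigma(A_0)$ enclosing only $\lambda_0$ among the spectrum of $A_0$, with $\Gamma$ contained in the annulus $\{\zeta : \max\{\alpha_\psi, \rho(N_0)\} < |\zeta| < |\lambda_0|\}$ (which is nonempty by hypothesis). Since $z \mapsto A_0(z)$ is continuous in $\norm{\cdot}$ and $\Gamma$ is compact, there exists $\theta_1 > 0$ such that $\Gamma \subset \C \setminus \sigma(A_0(z))$ for all $z \in D_{\theta_1}$, and the resolvent $(\zeta I - A_0(z))^{-1}$ is jointly continuous, indeed analytic, in $(\zeta, z)$ on $\Gamma \times D_{\theta_1}$. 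Setting
\begin{equation*}
\Pi_0(z) := \frac{1}{2\pi i} \oint_\Gamma (\zeta I - A_0(z))^{-1}\, d\zeta
\end{equation*}
yields an analytic family of projections, and by continuity of rank for projections in the operator norm, $\rank \Pi_0(z) = \rank \Pi_0 = 1$ on $D_{\theta_1}$, after possibly shrinking. Hence $A_0(z) \Pi_0(z) = \lambda_0(z) \Pi_0(z)$ for an analytic simple eigenvalue $\lambda_0(z)$, and $N_0(z) := A_0(z)(I - \Pi_0(z))$ is an analytic family satisfying $N_0(z) \Pi_0(z) = \Pi_0(z) N_0(z) = 0$, giving the quasi-compact decomposition.

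The second step is to secure the two strict inequalities by shrinking $\theta$. Since $\lambda_0(\cdot)$ is continuous with $|\lambda_0(0)| > \alpha_\psi$ (using $\alpha_\psi < \alpha < |\lambda_0|$ from \ref{prop:small_lasota_yorke} and the hypotheses of Theorem \ref{theorem:convergence_of_eigendata_derivs}, combined with $\alpha_\psi \leq \alpha$ — and if not, further shrink to arrange this), there is $\theta_2 \leq \theta_1$ with $\inf_{z \in D_{\theta_2}} |\lambda_0(z)| > \alpha_\psi$. For the bound on $\rho(N_0(z))$, I would invoke the upper semicontinuity of the spectral radius as a function of the operator \cite[IV.3.1]{kato1966perturbation}: since $\rho(N_0(0)) = \rho(N_0) < |\lambda_0|$, and since upper semicontinuity on the compact disc $\overline{D_{\theta_2}}$ yields $\limsup_{z \to 0} \rho(N_0(z)) \leq \rho(N_0)$, one can pick $\theta \in (0, \theta_2]$ small enough that
\begin{equation*}
\sup_{z \in D_\theta} \rho(N_0(z)) < \inf_{z \in D_\theta} |\lambda_0(z)|.
\end{equation*}
Finally, enforce $\theta \leq \psi$ to ensure the compatibility with Lemma \ref{prop:small_lasota_yorke}.

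The main technical care is in handling the spectral radius bound uniformly on $D_\theta$, because $\rho$ is only upper semicontinuous rather than continuous. However, since we only need a strict inequality that holds at $z = 0$ with room to spare (by a factor bounded away from one), upper semicontinuity on a small closed disc suffices, and no deeper argument is needed. All other ingredients — analyticity of the Riesz projection, constancy of rank, and analyticity of the extracted eigenvalue — are standard consequences of the analyticity of $z \mapsto A_0(z)$ in $L(B)$.
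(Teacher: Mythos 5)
Your argument follows the same route as the paper's proof: invoke standard analytic perturbation theory (Riesz projection contour integrals and rank constancy) to obtain the analytic decomposition near $z=0$, then shrink $\theta$ using continuity of $\lambda_0(\cdot)$ and upper semicontinuity of the spectral radius \cite[IV.3.1]{kato1966perturbation} to secure the two strict inequalities. The only slight wrinkle is your aside ``$\alpha_\psi \le \alpha$'' --- in fact $\alpha_\psi$ produced by Lemma \ref{prop:small_lasota_yorke} will generically exceed $\alpha$, but it can be chosen arbitrarily close to $\alpha$ (by taking $m$ large and $\psi$ small in that lemma), so $\alpha_\psi < |\lambda_0|$ is still arrangeable; this matches the paper's implicit assumption and does not affect the argument.
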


\paragraph{Step 5: Uniform bounds on the norm of the resolvent of $A_0(z)$.}

In order to apply the theory in \cite{keller1999stability} `uniformly' to obtain Theorem \ref{theorem:convergence_of_eigendata_derivs}, we need a uniform bound for the norms of the resolvents of the twisted operators $A_0(z)$. For $A \in L(B)$, $\delta > 0$ and $r > \rho_{ess}(A)$ define
\begin{equation*}
  V_{\delta,r}(A) = \{ \omega \in \C : \abs{\omega} \le r \text{ or } \dist(\omega,\sigma(A)) \le \delta \}.
\end{equation*}
Noting that $\sigma(A) \subseteq V_{\delta,r}(A)$, let
\begin{equation*}
  J_{\delta,r}(A) = \sup \left\{ \norm{(\omega - A)^{-1}}: \omega \in \C \setminus V_{\delta, r}(A)\right\}.
\end{equation*}

\begin{lemma}\label{lemma:twisted_resolvent_bound}
  Assume the hypotheses of Theorem \ref{theorem:convergence_of_eigendata_derivs}, and recall $\theta$ from Lemma \ref{lemma:quasicompact_unperturbed_operator}.
  For every compact $V \subseteq D_\theta$, $\delta > 0$ and $r > \sup_{z \in D_\theta} \rho(N_0(z))$ we have
  \begin{equation*}
    \sup_{z \in V} J_{\delta,r}(A_0(z)) < \infty.
  \end{equation*}
  \begin{proof}
    For every $z \in D_\theta$ and $\omega \in \C\setminus \sigma(A_0(z))$ let $R(\omega,z) = (\omega - A_0(z))^{-1}$ denote the resolvent of $A_0(z)$ at $\omega$. Fix $z \in V$. Recall from Lemma \ref{lemma:quasicompact_unperturbed_operator} that $A_0(z)$ is a simple quasi-compact operator with decomposition $A_0(z) = \lambda_0(z)\Pi_0(z) + N_0(z)$. As $\lambda_0(z)$ is an isolated simple eigenvalue of $A_0(z)$, the partial-fraction decomposition of the resolvent \cite[III-(6.32)]{kato1966perturbation} yields
    \begin{equation}\label{eq:twisted_resolvent_bound_1}
      R(\omega, z) = (\omega - \lambda_0(z))^{-1} \Pi_0(z) + S(\omega,z),
    \end{equation}
    where for each $\omega \in \{\lambda_0(z)\} \cup \C\setminus \sigma(A_0(z))$ the operator $S(\omega, z) = \lim_{\omega' \to \omega} R(\omega',z)(\Id-\Pi_0(z))$ is the reduced resolvent of $A_0(z)$ with respect to $\lambda_0(z)$ at $\omega$ \cite[III-(6.30), III-(6.31)]{kato1966perturbation}. We have
    \begin{equation}\begin{split}\label{eq:twisted_resolvent_bound_2}
      \sup_{\omega \in \C \setminus V_{\delta, r}(A_0(z))} \norm{R(\omega, z)} \le &\delta^{-1} \sup_{z\in V}\norm{\Pi_0(z)} \\
      &+ \sup_{\omega \in \C \setminus V_{\delta, r}(A_0(z))} \norm{S(\omega,z)}.
    \end{split}\end{equation}
    Since $\Pi_0(\cdot)$ is analytic on $V$, which is compact, $\sup_{z\in V}\norm{\Pi_0(z)} < \infty$. Hence, to complete the Lemma it suffices to bound
    equation $\sup_{\omega \in \C \setminus V_{\delta, r}(A_0(z))} \norm{S(\omega,z)}$ uniformly in $z \in V$. As noted immediately after \cite[III-(6.25)]{kato1966perturbation}, when restricted to $(\Id - \Pi_0(z))B$ the operator $S(\omega, z)$ coincides with the resolvent of $A_0(z)(\Id - \Pi_0(z)) = N_0(z)$ at $\omega$. Since $S(\omega, z)$ vanishes on $\Pi_0(z)B$, this implies that $S(\omega, z) = (\omega - N_0(z))^{-1}(\Id - \Pi_0(z))$. As $r > \sup_{z \in D_\theta} \rho(N_0(z))$, for $\omega \in \C \setminus B_r(0)$ the standard Neumann series\footnote{While the formula in \cite[I-(5.10)]{kato1966perturbation} is only given for the finite-dimensional case, it also holds in the current setting by the same arguments.} for the resolvent \cite[I-(5.10)]{kato1966perturbation} yields
    \begin{equation}\label{eq:twisted_resolvent_bound_3}
      S(\omega, z) = \left(\sum_{k=0}^\infty \omega^{-k-1}N_0(z)^{k}\right)(\Id-\Pi_0(z)).
    \end{equation}

    By the spectral radius formula, there exists some $H > 0$ such that for every $n \in \N$ we have $\norm{N_0(z)^{n}} \le H \left(\sup_{z \in D_\theta} \rho(N_0(z))\right)^n$. Using \eqref{eq:twisted_resolvent_bound_3} we therefore have
    \begin{equation*}\begin{split}
      &\sup_{\omega \in \C \setminus V_{\delta, r}(A_0(z))} \norm{S(\omega, z)}
      \le \sup_{\omega \in \C \setminus B_r(0)} \norm{S(\omega, z)} \\
      & \le \sup_{\omega \in \C \setminus B_r(0)} \left(\sum_{k=0}^\infty \abs{\omega^{-k-1}}\norm{N_0(z)^{k}}\right)\norm{\Id-\Pi_0(z)} \\
      &\le \left(\sum_{k=0}^\infty \frac{H}{r}\left(r^{-1}\sup_{z \in D_\theta} \rho(N_0(z))\right)^k\right)\left( \sup_{z \in V} \norm{\Id-\Pi_0(z)}\right),
    \end{split}\end{equation*}
    which is finite as $\sup_{z \in D_\theta} \rho(N_0(z)) < r$ and $\Id-\Pi_0(z)$ is analytic on $V$. Recalling \eqref{eq:twisted_resolvent_bound_2} and the definition of $J_{\delta,r}(A_0(z))$, we complete the proof.
  \end{proof}
\end{lemma}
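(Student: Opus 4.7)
The plan is to use the Riesz decomposition of the resolvent $R(\omega, z) := (\omega - A_0(z))^{-1}$ relative to the simple isolated eigenvalue $\lambda_0(z)$, handle each piece separately, and then bootstrap pointwise estimates to uniform ones using the analyticity of the spectral data from Lemma \ref{lemma:quasicompact_unperturbed_operator} together with the compactness of $V$.

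First I would fix $z \in V$ and apply the standard partial-fraction decomposition at a simple isolated eigenvalue \cite[III-(6.32)]{kato1966perturbation} to write
\begin{equation*}
R(\omega, z) = (\omega - \lambda_0(z))^{-1} \Pi_0(z) + S(\omega, z),
\end{equation*}
where $S(\omega, z)$ is the reduced resolvent of $A_0(z)$ with respect to $\lambda_0(z)$. Since $\Pi_0(z)$ commutes with $A_0(z)$, vanishes against $S(\omega,z)$, and $A_0(z)(\Id - \Pi_0(z)) = N_0(z)$, one has $S(\omega, z) = (\omega - N_0(z))^{-1}(\Id - \Pi_0(z))$.

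For $\omega \notin V_{\delta,r}(A_0(z))$, by definition $\dist(\omega, \sigma(A_0(z))) > \delta$, so in particular $\abs{\omega - \lambda_0(z)} > \delta$, and also $\abs{\omega} > r > \sup_{z \in D_\theta} \rho(N_0(z))$. The first term is therefore bounded in operator norm by $\delta^{-1} \norm{\Pi_0(z)}$, and since $\Pi_0(\cdot)$ is analytic on the compact set $V$, $\sup_{z \in V} \norm{\Pi_0(z)}$ is finite. The second term can be expanded via the Neumann series
\begin{equation*}
(\omega - N_0(z))^{-1} = \sum_{k=0}^\infty \omega^{-k-1} N_0(z)^k,
\end{equation*}
which converges pointwise for each $z \in V$ since $\abs{\omega} > \rho(N_0(z))$.

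The main obstacle is making the bound on this series uniform in $z \in V$, as the spectral radius formula alone only gives pointwise estimates $\norm{N_0(z)^k} \le H_z (r')^k$ with $H_z$ depending on $z$, for any fixed $r' \in (\sup_{z \in D_\theta} \rho(N_0(z)), r)$. To produce a uniform constant I would use the holomorphic functional calculus and represent
\begin{equation*}
N_0(z)^k = \frac{1}{2\pi i} \oint_{\abs{w} = r'} w^k (w - N_0(z))^{-1} \, dw,
\end{equation*}
which is valid because $\sigma(N_0(z)) \subseteq \{\abs{w} < r'\}$ for every $z \in V$. The integrand $(z, w) \mapsto (w - N_0(z))^{-1}$ is analytic in each variable on $V \times \{\abs{w}=r'\}$, hence jointly continuous, and therefore uniformly bounded on that compact set by some constant $M$. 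This yields $\norm{N_0(z)^k} \le M (r')^{k+1}$ uniformly in $z \in V$, so that summing the geometric series in $k$ and combining with the bound on the first term produces the required uniform bound on $J_{\delta,r}(A_0(z))$ for $z \in V$.
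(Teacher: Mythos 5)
Your proof is correct and follows the paper's decomposition strategy exactly: partial-fraction split of $R(\omega,z)$ at the simple eigenvalue $\lambda_0(z)$, identification of the reduced resolvent with $(\omega-N_0(z))^{-1}(\Id-\Pi_0(z))$, and Neumann-series expansion for $\abs{\omega}>r$. Where you diverge is in justifying a $z$-uniform bound on $\norm{N_0(z)^k}$: the paper simply invokes ``the spectral radius formula'' to claim a $z$-independent constant $H$ with $\norm{N_0(z)^k}\le H\bigl(\sup_{z'\in D_\theta}\rho(N_0(z'))\bigr)^k$, but the pointwise spectral radius formula produces only a $z$-dependent constant, and even for a single fixed $z$ it does not yield an exponential bound with rate \emph{equal to} the spectral radius (only with any strictly larger rate). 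You correctly identify this issue and repair it with the holomorphic functional calculus: representing $N_0(z)^k=\frac{1}{2\pi i}\oint_{\abs{w}=r'}w^k(w-N_0(z))^{-1}\,dw$ for an intermediate radius $r'\in\bigl(\sup_{z\in D_\theta}\rho(N_0(z)),\,r\bigr)$, and using compactness of $V\times\{\abs{w}=r'\}$ together with (joint) continuity of the resolvent of $N_0(z)$ to extract a uniform constant $M$ with $\norm{N_0(z)^k}\le M (r')^{k+1}$. This gives a genuine geometric ratio $r'/r<1$ for the tail series and a fully uniform bound, which is a cleaner and more rigorous route than the paper's one-line assertion. Everything else in your argument matches the paper's proof.
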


\paragraph{Step 6: Combining the previous steps.}
    Recall $\theta$ from Lemma \ref{lemma:quasicompact_unperturbed_operator} and let $V \subseteq D_\theta$ be compact.
    Out of necessity, we construct $\lambda_\epsilon(z), \Pi_\epsilon(z)$ and $N_\epsilon(z)$ for $z$ in a larger compact set whose interior contains $V$.
    As $V$ is compact there exists some $\gamma \in ( 0, \theta)$ such that $V$ is contained in $D_\gamma$. By Lemmas \ref{prop:twisted_KL1}, \ref{prop:twisted_KL2} and \ref{prop:small_lasota_yorke}, for each $z \in \clo{D_\gamma}$ the family of operators $\{A_\epsilon(z)\}_{\epsilon \ge 0}$ satisfies (KL) with data
    \begin{equation}\label{eq:convergence_of_eigendata_derivs_1}
      \tau_\gamma, C_{1,\gamma}, C_{2,\gamma}, C_{3,\gamma}, K_{1,\gamma}, K_{2,\gamma} \text{ and } \alpha_\gamma \in [0,1),
    \end{equation}
    only depending on $\clo{D_\gamma}$.

    By Lemma \ref{lemma:quasicompact_unperturbed_operator} there exists $r > \max\{ \alpha_\psi, \sup_{z \in D_\theta} \rho(N_0(z))\}$ and $\delta > 0$ such that $r + \delta < \inf_{z \in \clo{D_\gamma}} \abs{\lambda_0(z)}$, which implies that $B_{\delta}(\lambda_0(z)) \cap (B_r(0) \cup \sigma(A_0(z))) = \{\lambda_0(z) \}$ for every $ z\in \clo{D_\gamma}$.
    Hence, by \cite[Theorem 1 and the inequality (10)]{keller1999stability} for each $z \in \clo{D_\gamma}$ there exists $\epsilon_{\delta,r,z} > 0$ such that for $\epsilon \in [0, \epsilon_{\delta,r,z}]$ and $\omega \in \partial B_{\delta}(\lambda_0(z))$ the operator $(\omega - A_\epsilon(z))^{-1}$ is bounded and so the spectral projection
    \begin{equation}\label{eq:convergence_of_eigendata_derivs_2}
    \Pi_\epsilon(z) = \frac{1}{2 \pi i} \intf_{\partial B_{\delta}(\lambda_0(z))} (\omega - A_\epsilon(z))^{-1} d\omega
    \end{equation}
    is a well-defined element of $L(B)$.
    From the definitions of $\epsilon_0$ and $\epsilon_1$ in the proof of \cite[Corollary 1]{keller1999stability}, and the definition \cite[(13)]{keller1999stability}
    we see that $\epsilon_{\delta,r,z}$ may be chosen independently of $z \in \clo{D_\gamma}$ as (KL) is satisfied for each $\{A_\epsilon(z)\}_{\epsilon \ge 0}$ with data \eqref{eq:convergence_of_eigendata_derivs_1} independent of $z \in \clo{D_\gamma}$ and as
    \begin{equation}\label{eq:convergence_of_eigendata_derivs_4}
      \sup_{z \in \clo{D_\gamma}} J_{\delta,r}(A_0(z)) < \infty
    \end{equation}
    by Lemma \ref{lemma:twisted_resolvent_bound}.
    The same argument applied to \cite[part (3) of Corollary 1]{keller1999stability} implies that there exists $\epsilon_V \in (0, \epsilon_{\delta,r,z}]$ and $\delta_\gamma > 0$ such that if $\delta \in (0, \delta_\gamma)$ then $\rank(\Pi_\epsilon(z)) = \rank(\Pi_0(z)) =1$ for every $\epsilon \in [0, \epsilon_V]$ and $z \in \clo{D_\gamma}$.
    Since $\rank(\Pi_\epsilon(z)) = 1$, each $A_\epsilon(z)$ has a simple eigenvalue $\lambda_\epsilon(z) \in B_{\delta}(\lambda_0(z))$.
    By \cite[(10)]{keller1999stability} we have $\sigma(A_\epsilon(z)) \subseteq B_{\delta}(\lambda_0(z)) \cup B_{r}(0)$.
    Since $\sigma(A_\epsilon(z)) \cap B_{\delta}(\lambda_0(z)) = \{\lambda_\epsilon(z) \}$, it follows that $\sigma(A_\epsilon(z)) \setminus \{ \lambda_\epsilon(z) \} \subseteq B_{r}(0)$.
    Hence,
    \begin{equation}\label{eq:convergence_of_eigendata_derivs_3}
       \Id - \Pi_\epsilon(z) = \frac{1}{2 \pi i} \intf_{\partial B_{r}(0)} (\omega - A_\epsilon(z))^{-1} d\omega.
    \end{equation}
    Defining $N_\epsilon(z) = A_\epsilon(z)(\Id - \Pi_\epsilon(z))$, for every $z \in \clo{D_\gamma}$ and $\epsilon \in [0, \epsilon_V]$ we therefore have that $A_\epsilon(z)$ is a simple quasi-compact operator with decomposition $A_\epsilon(z) = \lambda_\epsilon(z) \Pi_\epsilon(z) + N_\epsilon(z)$.

    We will now show that for each $\epsilon \in [0, \epsilon_V]$ the maps $z \mapsto \lambda_\epsilon(z)$, $z \mapsto \Pi_\epsilon(z)$, and $z \mapsto N_\epsilon(z)$ are analytic on $D_\gamma$. As the contour in the integral in \eqref{eq:convergence_of_eigendata_derivs_3} is fixed, it is well-known that $z \mapsto \Id - \Pi_\epsilon(z)$ is analytic on $D_\gamma$ \cite[VIII.1.3 Theorem 1.7]{kato1966perturbation}.
    Hence $z \mapsto \Pi_\epsilon(z)$ is analytic on $D_\gamma$. As $z \mapsto A_\epsilon(z)$ is analytic and by the definition of $N_\epsilon(z)$, the map $z \mapsto N_\epsilon(z)$ is analytic on $D_\gamma$. Since $\lambda_\epsilon(z)$ has algebraic multiplicity 1, by the discussion in \cite[II.1.8]{kato1966perturbation}, the map $z \mapsto \lambda_\epsilon(z)$ is analytic on $D_\gamma$. Hence the maps $\lambda_\epsilon(\cdot), \Pi_\epsilon(\cdot)$ and $N_\epsilon(\cdot)$ are analytic on $V$ as they may be extended to analytic maps on an open subset of $\C$ that contains $V$, namely $D_\gamma$.

    We now confirm that the required H{\"o}lder estimate holds for the various spectral data using (KL) for $\{A_\epsilon(z)\}_{\epsilon \ge 0}$ and with uniform data as in \eqref{eq:convergence_of_eigendata_derivs_2}.
    By \cite[Corollary 1]{keller1999stability}, there exists $H_{\delta,r,z} > 0$ such that $\tnorm{\Pi_{\epsilon}(z) - \Pi_{0}(z)} \le H_{\delta,r,z} \tau_V(\epsilon)^{\eta(V)}$ for every $\epsilon \in [0 , \epsilon_V]$ and $z \in \clo{D_\gamma}$, where
    \begin{equation*}
      \eta(V) = \frac{\log( r / \alpha_\gamma) }{\log(\max\{K_{1,\gamma}, K_{2,\gamma}\} / \alpha_\gamma) }.
    \end{equation*}
    Recalling the bound \eqref{eq:convergence_of_eigendata_derivs_4} and that \eqref{eq:convergence_of_eigendata_derivs_1} is independent of $\clo{D_\gamma}$, we conclude from the proof of \cite[Corollary 1]{keller1999stability} that $H_{\delta,r,z}$ can be chosen independently of $z \in \clo{D_\gamma}$.
    Moreover, by Lemma \ref{prop:twisted_KL1} we have $\tau_V(\epsilon) = \sup_{z \in \clo{D_\gamma}} \wnorm{M(z)} \tau(\epsilon)$. Hence, if we set $O_0' = H_{\delta,r,z}\sup_{z \in \clo{D_\gamma}} \wnorm{M(z)}^{\eta(V)}$ then for all $\epsilon \in [0 , \epsilon_V]$ we have
    \begin{equation}\label{eq:convergence_of_eigendata_derivs_5}
      \sup_{z \in \clo{D_\gamma}} \tnorm{\Pi_{\epsilon}(z) - \Pi_{0}(z)} \le O_0' \tau(\epsilon)^{\eta(V)}.
    \end{equation}
    By definition we have
    \begin{equation*}
      (\lambda_0(z) - \lambda_\epsilon(z))\Pi_0(z) = (\lambda_\epsilon(z)-A_\epsilon(z))(\Pi_\epsilon(z) - \Pi_0(z)) + (A_0(z) - A_\epsilon(z))\Pi_0(z),
    \end{equation*}
    and so
    \begin{equation}\begin{split}\label{eq:convergence_of_eigendata_derivs_6}
      \abs{\lambda_0(z) - \lambda_\epsilon(z)}\tnorm{\Pi_0(z)} \le & (\abs{\lambda_\epsilon(z)}+\wnorm{A_\epsilon(z)})\tnorm{\Pi_\epsilon(z) - \Pi_0(z)} \\
      &+ \tnorm{A_0(z) - A_\epsilon(z)}\norm{\Pi_0(z)}.
    \end{split}\end{equation}
    For every $\epsilon \in [0, \epsilon_V]$ and $z \in \clo{D_\gamma}$ we have  $\wnorm{A_\epsilon(z)} \le K_{1,\gamma}$ and $\abs{\lambda_\epsilon(z)} \le \abs{\lambda_0(0)} + \delta$.
    Provided that $\gamma$ is sufficiently small, which we may guarantee by shrinking $\theta$, by the analyticity of $z \mapsto \Pi_0(z)$ we have that $\sup_{z \in \clo{D_\gamma}} \norm{\Pi_0(z)} < \infty$ and
    \begin{equation*}
      \inf_{z \in \clo{D_\gamma}} \tnorm{\Pi_0(z)} \ge \tnorm{\Pi_0(0)} - \inf_{z \in \clo{D_\gamma}} \norm{\Pi_0(z) - \Pi_0(0)} > 0.
    \end{equation*}
    By the estimates in the previous two sentences, Lemma \ref{prop:twisted_KL1} and \eqref{eq:convergence_of_eigendata_derivs_5} it follows that for every $z \in \clo{D_\gamma}$ and $\epsilon \in [0, \epsilon_V]$ we have
    \begin{equation*}\begin{split}
      \bigg(&\frac{\abs{\lambda_\epsilon(z)}+\wnorm{A_\epsilon(z)}}{\tnorm{\Pi_0(z)}}
      \bigg)\tnorm{\Pi_\epsilon(z) - \Pi_0(z)}
      + \frac{\norm{\Pi_0(z)}}{\tnorm{\Pi_0(z)}}\tnorm{A_0(z) - A_\epsilon(z)} \\
      &\le \left(\frac{(K_{1,\gamma} + \delta + \abs{\lambda_0(0)})O_0' + \sup_{z \in \clo{D_\gamma}} \norm{\Pi_0(z)} }{\inf_{z \in \clo{D_\gamma}} \tnorm{\Pi_0(z)}} \right)\tau(\epsilon)^{\eta(V)}  \\
      &:= O_0''\tau(\epsilon)^{\eta(V)} < \infty,
    \end{split}\end{equation*}
    which, when applied to \eqref{eq:convergence_of_eigendata_derivs_6}, yields
    \begin{equation*}
      \sup_{z \in \clo{D_\gamma}} \abs{\lambda_0(z) - \lambda_\epsilon(z)} \le O_0'' \tau(\epsilon)^{\eta(V)}.
    \end{equation*}
    Examining the proof of \cite[Corollary 2]{keller1999stability} and using the same arguments as before, we similarly find a constant $O_0'''$ such that
    \begin{equation*}
      \sup_{z \in \clo{D_\gamma}} \tnorm{ N_\epsilon(z) - N_0(z)} \le O_0''' \tau(\epsilon)^{\eta(V)}.
    \end{equation*}
    Since $V \subseteq \clo{D_\gamma}$, the required uniform H{\"o}lder estimate for the undifferentiated spectral data holds on $V$ with $O_0 = \max\{O_0',O_0'', O_0'''\}$ (i.e. we obtain the conclusion of Theorem \ref{theorem:convergence_of_eigendata_derivs} in the case where $n = 0$).
    For every compact subset of $D_\gamma$ one derives a uniform H{\"o}lder estimate for the $n^{th}$ derivative of  $\lambda_\epsilon(\cdot)$, $\Pi_\epsilon(\cdot)$ and $N_\epsilon(\cdot)$ by a standard application of Cauchy's integral formula along the contour $\partial D_\gamma$.
    In particular, we obtain the required uniform H{\"o}lder estimate on $V$, which concludes the proof of Theorem \ref{theorem:convergence_of_eigendata_derivs}.

\begin{remark}
  Note that $0 < \inf_{z \in V, \epsilon \in [0, \epsilon_V]} \abs{\lambda_\epsilon(z)}$. This bound will be important when defining approximate rate functions in Section \ref{subsec:stability_estimation_of_rate_function}.
\end{remark}

\section{Stability of statistical limit laws}\label{sec:application_to_NG}

  Let $(X,m)$ be a probability space, $T: X \to X$ a non-singular transformation, and $\LL$ the Perron-Frobenius operator associated with $T$.
  For a comprehensive overview of the Nagaev-Guivarc'h spectral method for obtaining statistical laws for dynamical systems we refer the reader to \cite{gouezel2015limit}, while for a broader description of both the method and its history we refer to \cite{HennionHerve, aimino2015note, RE83, GH88, nagaev1957}.
  In Section \ref{subsec:stability_estimation_of_variance} we show that if $T$ satisfies a central limit theorem (CLT) due to the Nagaev-Guivarc'h method then the variance of the CLT is stable with respect to perturbations of $\LL$ satisfying (KL), and in Section \ref{subsec:stability_estimation_of_rate_function} we obtain the analogous result for the stability of the rate function when $T$ satisfies a large deviation principle (LDP).
  We now detail a version of the method \cite{HennionHerve, aimino2015note}, incorporating some conditions from \cite{keller1999stability}, which we require for our stability results.

\begin{definition}\label{def:strong_space_condition}
 A Banach space $(B, \norm{\cdot})$ contained in $L^1(m)$ satisfies condition $(S)$ if each of the following conditions holds.
 \begin{enumerate}[label=(S\arabic*)]
   \item $\lnorm{\cdot} \le \norm{\cdot}$, where $\lnorm{\cdot}$ is the $L^1(m)$ norm.
   \item The closed unit ball of $(B, \norm{\cdot})$ is compact in $(L^1(m), \lnorm{\cdot})$.
   \item The constant functions are in $B$.
   \item $B$ is a Banach algebra i.e. there exists $C_B > 0$ such that $\norm{fg} \le C_B \norm{f}\norm{g}$ for all $f,g \in B$.
   \item $B$ is a vector sublattice of $L^1(m)$ i.e. $\conj{f}, \abs{f} \in B$ for every $f \in B$.
 \end{enumerate}
\end{definition}

Under (S) and some conditions on the Perron-Frobenius operator, such as quasi-compactness, the Nagaev-Guivarc'h method yields a CLT and LDP for $T$.

\begin{theorem}[{\cite[Theorems 1 and 2]{aimino2015note}}]
\label{aiminothm}
For each $n \in \Z^+$ and $g \in B$ we denote the partial Birkhoff sums $\sum_{k=0}^{n-1} g \circ T^k$ by $S_n(g)$.
If $(B, \norm{\cdot})$ satisfies \hyperref[def:strong_space_condition]{$(S)$} and the Perron-Frobenius operator $\LL:B\to B$ is simple and quasi-compact with $\rho(\LL) = 1$, then there exists a unique $T$-invariant measure $\mu$ such that $\frac{\mathrm{d}\mu}{\mathrm{d}m} \in B$ and
for any real-valued $g \in B \cap L^\infty(m)$ satisfying $\intf g d\mu = 0$ we have
\begin{enumerate}
\item
The variance $\lim_{n \to \infty} \intf \frac{S_n(g)^2}{n} d\mu =: \sigma_g^2$ exists.
\item
The sequence of random variables $\frac{S_n(g)}{\sqrt{n}}$ converges in distribution as $n \to \infty$ to a $N(0, \sigma_g^2)$ random variable in the probability space $(X,\nu)$ for every probability measure $\nu$ such that $\frac{\mathrm{d}\nu}{\mathrm{d}m} \in B$.
\item If $\sigma^2_g > 0$, there exists an open interval $I \subseteq \R$ containing $0$ and a rate function $r_g : I \to \R$, which is non-negative, continuous, strictly convex, and with unique minimum $r_g(0) = 0$, such that
  \begin{equation*}
    \lim_{n \to \infty} \frac{1}{n}\ln\nu( S_n(g) > ns ) = -r_g(s)
  \end{equation*}
  for every $s \in I \cap (0,\infty)$ and probability measure $\nu$ such that $\frac{\mathrm{d}\nu}{\mathrm{d}m} \in B$.
\end{enumerate}
\end{theorem}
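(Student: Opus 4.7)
The plan is to apply the Nagaev-Guivarc'h spectral method by introducing the twisted transfer operator $\LL(z)f := \LL(e^{zg}f)$ and extracting all three conclusions from the perturbation theory of its leading eigenvalue. Because $B$ is a Banach algebra by (S4) and $g \in B$, the series $e^{zg} = \sum_{k \ge 0} (zg)^k/k!$ converges in $B$ for every $z \in \C$ and depends analytically on $z$, so $z \mapsto \LL(z)$ is an analytic $L(B)$-valued map. At $z=0$ the operator $\LL(0) = \LL$ is simple quasi-compact with leading eigenvalue $1$, which combined with (S3) produces a unique $B$-density $h$ for a unique $T$-invariant absolutely continuous probability $\mu$; the leading eigenprojection has the form $\Pi_0 f = \left(\int f\, dm\right) h$.

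By analytic perturbation theory of isolated simple eigenvalues, there is a disc $D_\theta$ about the origin on which $\LL(z)$ admits an analytic quasi-compact decomposition $\LL(z) = \lambda(z)\Pi(z) + N(z)$ with $\lambda(0) = 1$, $\Pi(0) = \Pi_0$, and $\sup_{z \in D_\theta} \rho(N(z)) < 1$. Iterating the duality $\int \phi \cdot \LL\psi\, dm = \int (\phi \circ T)\psi\, dm$ yields the workhorse identity $\int \LL(z)^n f\, dm = \int f\cdot e^{zS_n(g)}\, dm$. Applied to $f = d\nu/dm \in B$, this represents the moment generating function as
\begin{equation*}
  M_n(z) := \E\!\left[e^{zS_n(g)}\right] = \lambda(z)^n v_f(z) + R_n(z),
\end{equation*}
where $v_f(z) := \int \Pi(z)f\, dm$ satisfies $v_f(0) = \int f\, dm = 1$ and $R_n(z)$ decays geometrically, uniformly on compact subsets of $D_\theta$.

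From this representation all three conclusions follow. Differentiating $M_n$ at $z=0$ (valid since $g \in L^\infty(m)$ makes $S_n(g)$ bounded and dominated convergence applies), the hypothesis $\int g\, d\mu = 0$ gives $M_n'(0) = 0$ when $f = h$, which after matching the leading $n$-power in the expansion forces $\lambda'(0) = 0$. A second differentiation, divided by $n$, yields $n^{-1}\int S_n(g)^2\, d\mu = \lambda''(0) + O(1/n)$, proving (1) with $\sigma_g^2 = \lambda''(0)$. For (2), set $z = it/\sqrt{n}$: using $\lambda'(0)=0$ one obtains $\lambda(it/\sqrt{n})^n \to \exp(-t^2\sigma_g^2/2)$, while $v_f(it/\sqrt{n}) \to 1$ and $R_n(it/\sqrt{n}) \to 0$, so characteristic functions converge pointwise and L\'evy's continuity theorem supplies convergence in distribution to $N(0,\sigma_g^2)$. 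For (3), restrict $z$ to a small real interval; the decomposition gives $n^{-1}\log M_n(z) \to \log\lambda(z) =: \Lambda(z)$, and $\Lambda$ is analytic with $\Lambda''(0) = \sigma_g^2 > 0$, hence strictly convex near $0$. The G\"artner-Ellis theorem then identifies the rate function with the Legendre-Fenchel transform $r_g(s) = \sup_z(sz - \Lambda(z))$, and strict convexity and smoothness of $\Lambda$ transfer to $r_g$, yielding continuity, non-negativity, strict convexity, and the unique zero at $s = 0$.

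The main obstacle is the analytic perturbation step: one must show that $z \mapsto \LL(z)$ is analytic into $L(B)$ with the \emph{strong} norm $\norm{\cdot}$ (not merely into $L^1(m)$), which is precisely where the Banach algebra axiom (S4) does real work, and then one must control the remainder $R_n(z)$ together with its first two derivatives uniformly in $n$. The latter reduces to a bound $\sup_{z \in D_\theta}\norm{N(z)^n} \le C\rho_0^n$ with $\rho_0 < 1$, obtained by combining upper semicontinuity of the spectral radius on $D_\theta$ with a compactness argument, after shrinking $\theta$ if needed. Cauchy's integral formula then transfers the bound to all relevant derivatives, closing the error estimates used in each of the three conclusions.
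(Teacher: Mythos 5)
The paper does not prove Theorem~\ref{aiminothm}; it imports it as a known result from \cite{aimino2015note}, which establishes it by precisely the Nagaev--Guivarc'h spectral method you describe, so there is no independent ``paper's own proof'' to compare against. Your sketch is an essentially correct reconstruction of that argument: the Banach algebra axiom (S4) gives analyticity of $z\mapsto\LL(z)$ in $L(B)$; analytic perturbation theory of the isolated simple eigenvalue yields the decomposition $\LL(z)=\lambda(z)\Pi(z)+N(z)$ with a uniform geometric bound on $N(z)^n$ on a small disc; the duality $\int\LL(z)^n f\,dm=\int f\,e^{zS_n(g)}\,dm$ turns this into a representation of the moment generating function; Taylor expansion of $\lambda$ together with $\int g\,d\mu=0$ gives $\lambda'(0)=0$ and $\sigma_g^2=\lambda''(0)$, from which (1) and (2) follow as you say.

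Two caveats are worth flagging, though neither is a fatal gap. First, your appeal to ``the G\"artner--Ellis theorem'' for part (3) should be qualified: the limit $\Lambda(z)=\log\lambda(z)$ is only controlled on a small interval $(-\theta,\theta)$, so what one actually obtains is a \emph{local} large-deviation estimate for $s$ in the image $\Lambda'\bigl((-\theta,\theta)\bigr)$ --- which is exactly why the statement restricts to a small interval $I$ containing $0$. Invoking the full G\"artner--Ellis theorem would require knowledge of $\Lambda$ near the boundary of its effective domain that is not available; the version actually used is the local one as in \cite[Proposition VIII.3]{HennionHerve}. Second, you gloss the step where the leading eigenvector of $\LL$ is identified with a non-negative probability density $h$ and the left eigenfunctional is identified with $f\mapsto\int f\,dm$: this is where the Markov property of the transfer operator, the constants axiom (S3), and the lattice axiom (S5) do real work, and a full argument would spell out why the simple leading eigenvector can be taken non-negative. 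These are the usual technicalities of the method rather than conceptual omissions, and your plan otherwise matches the cited proof step for step.
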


Before stating our stability results we describe how the variance and rate function are determined.
Assume the setting of Theorem \ref{aiminothm}. Let $M_g : \C \to L(B)$ be defined by $M_g(z)(f) = e^{zg}f$, where $e^{zg}$ is defined by the usual power series. From this power-series representation it follows that $M_g$ is a $\lnorm{\cdot}$-bounded twist. The twisted Perron-Frobenius operator is then defined by $\LL(z) := \LL M_g(z)$. As $\LL$ is a simple quasi-compact operator, Lemma  \ref{lemma:quasicompact_unperturbed_operator} implies that $\LL(z)$ has quasi-compact decomposition
\begin{equation*}
  \LL(z) = \lambda(z)\Pi(z) + N(z),
\end{equation*}
where $\lambda(\cdot), \Pi(\cdot)$ and $N(\cdot)$ are analytic functions on some neighbourhood of $0$, say $D_\theta$.
The variance of the CLT \cite[Lemma IV.3]{HennionHerve} is given by
\begin{equation}\label{eq:variance_2nd_deriv_twisted_eigenvalue}
  \sigma^2_g = \lambda^{(2)}(0),
\end{equation}
and the rate function of the LDP \cite[Proposition VIII.3]{HennionHerve} is
\begin{equation}\label{eq:rate_function_twisted_eigenvalue}
  r_g(s) = \sup_{z \in (-\theta,\theta)} (sz - \ln(\lambda(z))),
\end{equation}
or, equivalently, the convex conjugate (Definition \ref{def:convex_conjugate}) of the function $\Lambda : (-\theta, \theta) \to \R$ defined by $\Lambda(z) = \log(\lambda(z))$.
As both the variance and the rate function depend on the derivatives of the spectral data of the twisted Perron-Frobenius operator, we can use Theorem \ref{theorem:convergence_of_eigendata_derivs} to prove their stability.

\begin{corollary}\label{thm:application_to_ng_method}
  Let $(B,\norm{\cdot})$, $\mathcal{L}$, and $g$ satisfy the hypotheses of Theorem \ref{aiminothm}, and define the $\lnorm{\cdot}$-compactly bounded twist $M_g : \C \to L(B)$ by $M_g(z)(f) = e^{zg}f$.
  Let $\{\LL_\epsilon\}_{\epsilon \ge 0 } \subseteq L(B)$ be a family of operators satisfying (KL), with $\LL_0 = \LL$. There exists $\theta > 0$, for each compact $V \subseteq D_\theta$, an $\epsilon_V > 0$ and, for each $\epsilon \in [0, \epsilon_V]$, analytic maps $\lambda_\epsilon : V \to \C$,  $\Pi_\epsilon : V \to L(B)$, and $N_\epsilon : V \to L(B)$, such that for each $z \in V$ the operator $\LL_\epsilon(z)$ is quasi-compact with decomposition $\LL_\epsilon(z) = \lambda_\epsilon(z) \Pi_\epsilon(z) + N_\epsilon(z)$. Moreover, as $\epsilon \to 0$ the spectral data $\lambda_\epsilon(\cdot),\Pi_\epsilon( \cdot)$ and $N_\epsilon(\cdot)$ converge in the manner described in Theorem \ref{theorem:convergence_of_eigendata_derivs}.
  \end{corollary}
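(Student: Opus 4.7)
The plan is to reduce this corollary to a direct application of Theorem \ref{theorem:convergence_of_eigendata_derivs}, with $A_\epsilon = \LL_\epsilon$ and $M = M_g$. Since the perturbation hypothesis (KL) on $\{\LL_\epsilon\}_{\epsilon \ge 0}$ is already assumed, the proof reduces to verifying the two structural hypotheses of Theorem \ref{theorem:convergence_of_eigendata_derivs}: that $\LL_0 = \LL$ is a simple quasi-compact operator whose leading eigenvalue strictly dominates the quantity $\alpha$ from \ref{en:kl_ly_bound}, and that $M_g$ is a compactly $\lnorm{\cdot}$-bounded twist.

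The first verification is immediate: the hypotheses of Theorem \ref{aiminothm} already state that $\LL$ is simple quasi-compact with $\rho(\LL) = 1$, so the leading eigenvalue is $\lambda_0 = 1$, while $\alpha \in (0,1)$ by \ref{en:kl_ly_bound}. Hence $\alpha < |\lambda_0|$ as required.

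The main content of the proof is verifying the twist properties of $M_g$. For this I would proceed as follows. First, use the Banach algebra property (S4) to bound $\|g^n\| \le C_B^{n-1}\|g\|^n$ by induction, so that the power series $\sum_{n \ge 0} \frac{z^n g^n}{n!}$ defines an element of $B$ for every $f \in B$ (via $e^{zg}f$) and the series converges in $\norm{\cdot}$-operator norm on all of $\C$; this both establishes analyticity of $M_g : \C \to L(B)$ and shows $M_g(0) = \Id$, i.e.\ that $M_g$ is a twist. For compact $\lnorm{\cdot}$-boundedness, the pointwise estimate $|e^{zg(x)}| \le e^{|z|\,\|g\|_{L^\infty(m)}}$ (which uses $g \in L^\infty(m)$) gives, for $f \in B$,
\begin{equation*}
\lnorm{M_g(z)f} = \intf |e^{zg}f|\,dm \le e^{|z|\,\|g\|_{L^\infty(m)}} \lnorm{f},
\end{equation*}
so that $\lnorm{M_g(z)} \le e^{|z|\,\|g\|_{L^\infty(m)}}$. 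For any compact $V \subseteq \C$ we may set $C_{M_g,V} := \exp\bigl(\|g\|_{L^\infty(m)} \sup_{z \in V}|z|\bigr) < \infty$, confirming the compact $\lnorm{\cdot}$-boundedness condition of Definition \ref{def:twist}.

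With both hypotheses verified, Theorem \ref{theorem:convergence_of_eigendata_derivs} applied to $A_\epsilon = \LL_\epsilon$ and $M = M_g$ (with weak norm $\wnorm{\cdot} = \lnorm{\cdot}$) produces the claimed $\theta > 0$, the compact-dependent $\epsilon_V > 0$, and the analytic maps $\lambda_\epsilon(\cdot)$, $\Pi_\epsilon(\cdot)$, $N_\epsilon(\cdot)$ yielding the quasi-compact decomposition $\LL_\epsilon(z) = \lambda_\epsilon(z)\Pi_\epsilon(z) + N_\epsilon(z)$ for $z \in V$, together with the uniform H\"older convergence of all $n$-th order derivatives. No significant obstacle is expected: the argument is essentially a translation of the abstract framework into the Nagaev-Guivarc'h notation, with the Banach algebra condition (S4) and $g \in L^\infty(m)$ being exactly what is needed to make $M_g$ fit the abstract twist framework.
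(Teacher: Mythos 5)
Your proposal is correct and takes essentially the same route as the paper: reduce to Theorem \ref{theorem:convergence_of_eigendata_derivs} with $A_\epsilon=\LL_\epsilon$, $M=M_g$, $\wnorm{\cdot}=\lnorm{\cdot}$, noting $\rho(\LL)=1>\alpha$. The only difference is that you spell out the verification that $M_g$ is a compactly $\lnorm{\cdot}$-bounded twist (via (S4) and $g\in L^\infty(m)$), which the paper states as a fact established in the preceding discussion.
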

  \begin{proof}
    Note that condition $(S)$ implies that $(B, \norm{\cdot})$ satisfies the requirements on the Banach space in Section \ref{sec:abstract_twist_stability}, with $\wnorm{\cdot}$ taken to be $\lnorm{\cdot}$. As noted, $M_g$ is a compactly $\lnorm{\cdot}$-bounded twist. Since $\LL$ is a simple quasi-compact operator with $\rho(\LL) = 1$ it has leading eigenvalue 1, which is strictly greater than the constant $\alpha$ in \ref{en:kl_ly_bound}. Thus we may apply Theorem  \ref{theorem:convergence_of_eigendata_derivs}, which yields the required conclusion.
  \end{proof}

  \subsection{Stability and estimation of the variance}\label{subsec:stability_estimation_of_variance}

  In view of \eqref{eq:variance_2nd_deriv_twisted_eigenvalue}, Corollary \ref{thm:application_to_ng_method} immediately yields the stability of the variance.

  \begin{theorem}\label{thm:stability_of_variance}
    Under the hypotheses of Corollary \ref{thm:application_to_ng_method}, $\lambda_\epsilon^{(2)}(0)$ is well defined for sufficiently small $\epsilon$ and the map $\epsilon \mapsto \lambda_\epsilon^{(2)}(0)$ is H{\"o}lder at $\epsilon = 0$. In particular we have $\lim_{\epsilon \to 0} \lambda_\epsilon^{(2)}(0) = \sigma^2_g$.
    \begin{proof}
      Let $V$ be a compact neighbourhood of $0$ with non-empty interior that is contained in $D_\theta$, where $\theta$ is from Theorem \ref{theorem:convergence_of_eigendata_derivs}.
      By Theorem \ref{theorem:convergence_of_eigendata_derivs} there exists $\epsilon_V > 0$ and, for each $\epsilon \in [0, \epsilon_V]$, analytic maps $\lambda_\epsilon(\cdot): V \to \C$ such that $\lambda_\epsilon(z)$ is a simple leading eigenvalue of $\LL_\epsilon(z)$ and the maps $\epsilon \mapsto \lambda_\epsilon^{(2)}(z)$ are uniformly H{\"o}lder at $\epsilon = 0$ for $z \in V$.
      In particular, the map $\epsilon \mapsto \lambda_\epsilon^{(2)}(0)$ is H{\"o}lder at $\epsilon = 0$ and so $\lim_{\epsilon \to 0} \lambda_\epsilon^{(2)}(0) = \lambda_0^{(2)}(0) = \sigma^2_g$.
    \end{proof}
  \end{theorem}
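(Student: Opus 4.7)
The plan is to invoke Corollary \ref{thm:application_to_ng_method} directly: all of the analytic work and Hölder estimates are encoded in Theorem \ref{theorem:convergence_of_eigendata_derivs}, so the proof of stability of the variance should reduce to a single specialisation ($n=2$, $z=0$) combined with the Nagaev--Guivarc'h identification $\sigma_g^2=\lambda_0^{(2)}(0)$ from \eqref{eq:variance_2nd_deriv_twisted_eigenvalue}.

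First, I would fix a compact neighbourhood $V \subseteq D_\theta$ of $0$ (say a small closed disc) with non-empty interior, where $\theta$ is the radius produced by Corollary \ref{thm:application_to_ng_method}. Taking $V$ to have non-empty interior is what allows the derivative $\lambda_\epsilon^{(2)}(0)$ to be interpreted at an interior point of the domain of analyticity of $\lambda_\epsilon(\cdot)$. Corollary \ref{thm:application_to_ng_method} then provides, for each $\epsilon \in [0, \epsilon_V]$, an analytic function $\lambda_\epsilon(\cdot): V \to \C$ representing the leading simple eigenvalue of the twisted perturbed operator $\LL_\epsilon(z)$, together with the uniform Hölder estimate from Theorem \ref{theorem:convergence_of_eigendata_derivs} applied with $n=2$:
\begin{equation*}
  \sup_{z \in V} \abs{\lambda_\epsilon^{(2)}(z) - \lambda_0^{(2)}(z)} \le O_2 \tau(\epsilon)^{\eta(V)}.
\end{equation*}

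Next, I would evaluate this bound at $z = 0 \in V$ to obtain
\begin{equation*}
  \abs{\lambda_\epsilon^{(2)}(0) - \lambda_0^{(2)}(0)} \le O_2 \tau(\epsilon)^{\eta(V)},
\end{equation*}
which expresses precisely that the map $\epsilon \mapsto \lambda_\epsilon^{(2)}(0)$ is Hölder at $\epsilon=0$ (since $\tau(\epsilon) \to 0$ by \ref{en:kl_conv}). Finally, the Nagaev--Guivarc'h formula \eqref{eq:variance_2nd_deriv_twisted_eigenvalue} identifies $\lambda_0^{(2)}(0) = \sigma_g^2$, so the above estimate yields $\lim_{\epsilon \to 0} \lambda_\epsilon^{(2)}(0) = \sigma_g^2$.

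There is no real obstacle here beyond bookkeeping: the only subtle point is checking that the hypotheses of Corollary \ref{thm:application_to_ng_method} are applicable and that the second derivative is well-defined at the interior point $z=0$. Well-definedness follows from the analyticity of $\lambda_\epsilon(\cdot)$ on an open neighbourhood of $V$ guaranteed by Theorem \ref{theorem:convergence_of_eigendata_derivs}, and applicability is immediate from the hypotheses of Corollary \ref{thm:application_to_ng_method}, which we have assumed. Hence the argument is complete.
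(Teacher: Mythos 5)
Your proposal is correct and follows essentially the same route as the paper's proof: fix a compact neighbourhood $V$ of $0$ with non-empty interior inside $D_\theta$, invoke Corollary \ref{thm:application_to_ng_method} (i.e.\ Theorem \ref{theorem:convergence_of_eigendata_derivs}) to get the uniform H\"older estimate on second derivatives, specialise to $z=0$, and identify $\lambda_0^{(2)}(0)=\sigma_g^2$ via \eqref{eq:variance_2nd_deriv_twisted_eigenvalue}. Your version is slightly more explicit about writing out the H\"older inequality at $n=2$, but the logic is identical to the paper's.
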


  In Sections \ref{sec:piecewise_expanding_one_dim} and \ref{sec:piecewise_expanding_multi_dim} we will consider the case where $\LL_\epsilon$ is a numerical approximation of $\LL$.
  In this context Theorem \ref{thm:stability_of_variance} provides a method for rigorously approximating $\sigma^2_g$ by computing $\lambda_\epsilon^{(2)}(0)$.
  Our proposed numerical methods will exploit the analyticity of $M_g(z)$ and the fact that each $\mathcal{L}_\epsilon(z)$ is a simple quasi-compact operator by using an explicit expression for $\lambda^{(2)}(z)$ in terms of the 0th order spectral data of $\LL_\epsilon(z)$, evaluated at $z=0$.

  \begin{proposition}\label{prop:2nd_deriv_twisted_eigenvalue_formula}
    Under the hypotheses of Theorem \ref{thm:application_to_ng_method}, if for any $z \in D_{\theta}$ and sufficiently small $\epsilon \ge 0$ we choose $v_{\epsilon,z} \in B$ and $\varphi_{\epsilon,z} \in B^*$ so that $\Pi_\epsilon(z)f = \varphi_{\epsilon,z}(f)v_{\epsilon,z}$, then
    \begin{equation}\begin{split}
    \label{2ndderiv}
      \lambda_\epsilon^{(2)}(z) &= \lambda_{\epsilon}(z)\varphi_{\epsilon,z}(g^2v_{\epsilon,z})\\
      &+ 2 \lambda_{\epsilon}(z)\varphi_{\epsilon,z}(g
      (\lambda_{\epsilon}(z) - \LL_{\epsilon}(z))^{-1}\LL_{\epsilon}(z)(\Id- \Pi_{\epsilon}(z))(gv_{\epsilon,z})).
    \end{split}\end{equation}
    \begin{proof}
      For any $z \in D_\theta$ there exists a compact set $V$ such that $z$ is in the interior of $V$. Assume $\epsilon \in [0, \epsilon_V]$.
      Differentiating the identity $(\lambda_\epsilon(z) - \LL_\epsilon(z))\Pi_\epsilon(z) = 0$ once with respect to $z$ yields
      \begin{equation}\label{eq:2nd_deriv_twisted_eigenvalue_formula_1}
        \lambda_\epsilon^{(1)}(z)\Pi_\epsilon(z) =  \LL_\epsilon^{(1)}(z)\Pi_\epsilon(z) - (\lambda_\epsilon(z) - \LL_\epsilon(z))\Pi_\epsilon^{(1)}(z),
      \end{equation}
      while differentiating a second time yields
      \begin{equation}\begin{split}
          \lambda_\epsilon^{(2)}(z)\Pi_\epsilon(z) =   &\LL_\epsilon^{(2)}(z)\Pi_\epsilon(z) - 2(\lambda_\epsilon^{(1)}(z) - \LL_\epsilon^{(1)}(z))\Pi_\epsilon^{(1)}(z) \\
          &- (\lambda_\epsilon(z)
          - \LL_\epsilon(z))\Pi_\epsilon^{(2)}(z).
      \end{split}\end{equation}
      As $\Pi_\epsilon(z)(\lambda_\epsilon(z) - \LL_\epsilon(z)) = 0$, by applying $\Pi_{\epsilon}(z)$ on the left of \eqref{eq:2nd_deriv_twisted_eigenvalue_formula_1} we obtain
      \begin{equation}\label{eq:2nd_deriv_twisted_eigenvalue_formula_2}
        \lambda_\epsilon^{(1)}(z)\Pi_{\epsilon}(z) = \Pi_{\epsilon}(z) \LL_\epsilon^{(1)}(z)\Pi_\epsilon(z).
      \end{equation}
      Similarly,
      \begin{equation}\label{eq:2nd_deriv_twisted_eigenvalue_formula_3}
        \lambda_\epsilon^{(2)}(z)\Pi_\epsilon(z) =   \Pi_\epsilon(z)\LL_\epsilon^{(2)}(z)\Pi_\epsilon(z) - 2\Pi_\epsilon(z)(\lambda_\epsilon^{(1)}(z) - \LL_\epsilon^{(1)}(z))\Pi_\epsilon^{(1)}(z).
      \end{equation}
      As $\lambda_{\epsilon}(z)$ is an isolated simple eigenvalue, by \cite[II-(2.14)]{kato1966perturbation} we have\footnote{We note that the sign discrepancy between \cite[II-(2.14)]{kato1966perturbation} and \eqref{eq:2nd_deriv_twisted_eigenvalue_formula_4} is due to an additional factor of $-1$ in the definition of the resolvent in \cite{kato1966perturbation}.}
      \begin{equation}\label{eq:2nd_deriv_twisted_eigenvalue_formula_4}
        \Pi_{\epsilon}^{(1)}(z) = \Pi_{\epsilon}(z)\LL^{(1)}_{\epsilon}(z)S_\epsilon(z) +
        S_\epsilon(z)\LL^{(1)}_{\epsilon}(z) \Pi_{\epsilon}(z),
      \end{equation}
      where $S_{\epsilon}(z) = (\lambda_{\epsilon}(z) - \LL_{\epsilon}(z))^{-1}(\Id- \Pi_{\epsilon}(z))$.
      Note that $\Pi_\epsilon(z)S_{\epsilon}(z) = S_{\epsilon}(z) \Pi_\epsilon(z) = 0$. Applying \eqref{eq:2nd_deriv_twisted_eigenvalue_formula_4} to \eqref{eq:2nd_deriv_twisted_eigenvalue_formula_3}, we find that
      \begin{equation*}\begin{split}
          \lambda_\epsilon^{(2)}(z)&\Pi_\epsilon(z) \\
          =   &\Pi_\epsilon(z)\LL_\epsilon^{(2)}(z)\Pi_\epsilon(z) - 2 \lambda_{\epsilon}^{(1)}(z)\Pi_{\epsilon}(z)\Pi_{\epsilon}^{(1)}(z) \\
          &+ 2 \Pi_\epsilon(z)\LL_\epsilon^{(1)}(z)\Pi_{\epsilon}^{(1)}(z) \\
          = &\Pi_\epsilon(z)\LL_\epsilon^{(2)}(z)\Pi_\epsilon(z) - 2 \lambda_{\epsilon}^{(1)}(z)\Pi_{\epsilon}(z)\LL^{(1)}_{\epsilon}(z)S_\epsilon(z) \\
          &+ 2 \Pi_\epsilon(z)\LL_\epsilon^{(1)}(z)(\Pi_{\epsilon}(z)\LL^{(1)}_{\epsilon}(z)S_\epsilon(z) +
          S_\epsilon(z)\LL^{(1)}_{\epsilon}(z) \Pi_{\epsilon}(z)).
      \end{split}\end{equation*}
      Applying $\Pi_\epsilon(z)$ on the right then yields
      \begin{equation}\begin{split}\label{eq:2nd_deriv_twisted_eigenvalue_formula_5}
        \lambda_\epsilon^{(2)}(z)\Pi_\epsilon(z)
        = &\Pi_\epsilon(z)\LL_\epsilon^{(2)}(z)\Pi_\epsilon(z) \\
        &+ 2 \Pi_\epsilon(z)\LL_\epsilon^{(1)}(z)
        S_\epsilon(z)\LL^{(1)}_{\epsilon}(z) \Pi_{\epsilon}(z).
      \end{split}\end{equation}
      Recall that the $\LL_\epsilon(z) = \LL_\epsilon M_g(z)$, where $M_g(z)(f) = e^{zg}f$. For each $n \in N$ and $f \in B$ we therefore have
      \begin{equation}\label{eq:2nd_deriv_twisted_eigenvalue_formula_6}
        \LL_\epsilon^{(n)}(z)(f) = \LL_\epsilon M_g^{(n)}(z)(f) = \LL_\epsilon (g^n e^{zg}f) = \LL_\epsilon(z)(g^n f).
      \end{equation}
      As $\Pi_\epsilon(z)(f) = \varphi_{\epsilon,z}(f)v_{\epsilon,z}$, the left and right eigenvectors of $\LL_\epsilon(z)$ for the eigenvalue $\lambda_\epsilon(z)$ are $v_{\epsilon,z}$ and $\varphi_{\epsilon,z}$, respectively. Moreover, as $\Pi_\epsilon(z)$ is a projection, we have $\varphi_{\epsilon,z} \Pi_\epsilon(z) = \varphi_{\epsilon,z}$ and $\Pi_\epsilon(z)v_{\epsilon,z} = v_{\epsilon,z}$.
      Using \eqref{eq:2nd_deriv_twisted_eigenvalue_formula_6}, and then applying $\varphi_{\epsilon,z}$ on the left and $v_{\epsilon,z}$ on the right to \eqref{eq:2nd_deriv_twisted_eigenvalue_formula_5}, we obtain
      \begin{equation*}\begin{split}
        \lambda_\epsilon^{(2)}(z) &= \lambda_{\epsilon}(z)\varphi_{\epsilon,z}(g^2v_{\epsilon,z})
        \\
        &+ 2 \lambda_{\epsilon}(z)\varphi_{\epsilon,z}(g
        (\lambda_{\epsilon}(z) - \LL_{\epsilon}(z))^{-1}(\Id- \Pi_{\epsilon}(z))\LL_{\epsilon}(z)(gv_{\epsilon,z})).
      \end{split}\end{equation*}
      We obtain the required statement upon noting that $\Id- \Pi_{\epsilon}(z)$ and $\LL_{\epsilon}(z)$ commute.
    \end{proof}
  \end{proposition}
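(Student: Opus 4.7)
The plan is to differentiate the eigen-equation $(\lambda_\epsilon(z)-\LL_\epsilon(z))\Pi_\epsilon(z)=0$ twice with respect to $z$, then extract scalar expressions by sandwiching with $\Pi_\epsilon(z)$ on one side and the rank-one structure $\Pi_\epsilon(z)f=\varphi_{\epsilon,z}(f)v_{\epsilon,z}$ on the other.

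First I would compute the first derivative of $(\lambda_\epsilon(z)-\LL_\epsilon(z))\Pi_\epsilon(z)=0$ and then the second. Applying $\Pi_\epsilon(z)$ on the left kills the term containing $\Pi_\epsilon^{(2)}(z)$ (since $\Pi_\epsilon(z)(\lambda_\epsilon(z)-\LL_\epsilon(z))=0$), yielding
\begin{equation*}
\lambda_\epsilon^{(2)}(z)\Pi_\epsilon(z)=\Pi_\epsilon(z)\LL_\epsilon^{(2)}(z)\Pi_\epsilon(z)-2\Pi_\epsilon(z)\bigl(\lambda_\epsilon^{(1)}(z)-\LL_\epsilon^{(1)}(z)\bigr)\Pi_\epsilon^{(1)}(z).
\end{equation*}
The analogous first-derivative manipulation gives $\lambda_\epsilon^{(1)}(z)\Pi_\epsilon(z)=\Pi_\epsilon(z)\LL_\epsilon^{(1)}(z)\Pi_\epsilon(z)$.

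Next I would invoke the standard Kato formula \cite[II-(2.14)]{kato1966perturbation} for the derivative of an isolated simple eigenprojection,
\begin{equation*}
\Pi_\epsilon^{(1)}(z)=\Pi_\epsilon(z)\LL_\epsilon^{(1)}(z)S_\epsilon(z)+S_\epsilon(z)\LL_\epsilon^{(1)}(z)\Pi_\epsilon(z),
\end{equation*}
where $S_\epsilon(z)=(\lambda_\epsilon(z)-\LL_\epsilon(z))^{-1}(\Id-\Pi_\epsilon(z))$ is the reduced resolvent. Substituting this into the formula for $\lambda_\epsilon^{(2)}(z)\Pi_\epsilon(z)$ and then applying $\Pi_\epsilon(z)$ on the right, the terms involving $\lambda_\epsilon^{(1)}(z)\Pi_\epsilon(z)\LL_\epsilon^{(1)}(z)S_\epsilon(z)$ and $\Pi_\epsilon(z)\LL_\epsilon^{(1)}(z)\Pi_\epsilon(z)\LL_\epsilon^{(1)}(z)S_\epsilon(z)$ collapse because $S_\epsilon(z)\Pi_\epsilon(z)=\Pi_\epsilon(z)S_\epsilon(z)=0$. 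What remains is
\begin{equation*}
\lambda_\epsilon^{(2)}(z)\Pi_\epsilon(z)=\Pi_\epsilon(z)\LL_\epsilon^{(2)}(z)\Pi_\epsilon(z)+2\Pi_\epsilon(z)\LL_\epsilon^{(1)}(z)S_\epsilon(z)\LL_\epsilon^{(1)}(z)\Pi_\epsilon(z).
\end{equation*}

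Then I would turn derivatives of $\LL_\epsilon(z)$ into multiplications by $g$: since $\LL_\epsilon(z)=\LL_\epsilon M_g(z)$ with $M_g(z)f=e^{zg}f$, one has $\LL_\epsilon^{(n)}(z)f=\LL_\epsilon(z)(g^nf)$. Finally, evaluating the operator identity on $v_{\epsilon,z}$ and pairing with $\varphi_{\epsilon,z}$, and using $\varphi_{\epsilon,z}\Pi_\epsilon(z)=\varphi_{\epsilon,z}$, $\Pi_\epsilon(z)v_{\epsilon,z}=v_{\epsilon,z}$, together with $\LL_\epsilon(z)v_{\epsilon,z}=\lambda_\epsilon(z)v_{\epsilon,z}$, produces the claimed scalar identity; the closing step is the commutation $(\Id-\Pi_\epsilon(z))\LL_\epsilon(z)=\LL_\epsilon(z)(\Id-\Pi_\epsilon(z))$, which follows since $\Pi_\epsilon(z)$ is the spectral projection of $\LL_\epsilon(z)$.

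The calculation is essentially bookkeeping once the Kato resolvent identity is in hand; I expect the only mild obstacle to be tracking the correct order of operators so that the simplifications $S_\epsilon(z)\Pi_\epsilon(z)=0$ and $\Pi_\epsilon(z)S_\epsilon(z)=0$ can be applied cleanly, and, at the end, recognising that $(\Id-\Pi_\epsilon(z))$ may be moved to the right of $\LL_\epsilon(z)$ so that the factor $(gv_{\epsilon,z})$ is hit by $\LL_\epsilon(z)(\Id-\Pi_\epsilon(z))$ rather than the other ordering.
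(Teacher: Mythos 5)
Your proposal is correct and follows essentially the same route as the paper: differentiate the eigen-equation $(\lambda_\epsilon(z)-\LL_\epsilon(z))\Pi_\epsilon(z)=0$ twice, sandwich with $\Pi_\epsilon(z)$ on both sides, substitute Kato's reduced-resolvent formula for $\Pi_\epsilon^{(1)}(z)$, convert $\LL_\epsilon^{(n)}(z)$ into multiplication by $g^n$, and extract the scalar identity via the rank-one structure. The commutation observation at the end matches the paper's closing step exactly.
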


  We now simplify the formula for $\lambda^{(2)}(z)$ in Proposition \ref{prop:2nd_deriv_twisted_eigenvalue_formula} in the case $z=0$ and assuming that $\LL_\epsilon$ is a Markov operator.
  Evaluating \eqref{2ndderiv} at $z = 0$ yields
  \begin{equation}\begin{split}\label{eq:variance_for_comp_1}
    \lambda_\epsilon^{(2)}(0) = &\lambda_{\epsilon}(0)\varphi_{\epsilon,0}(g^2v_{\epsilon,0})\\
    &+ 2 \lambda_{\epsilon}(0)\varphi_{\epsilon,0}(g
    (\lambda_{\epsilon}(0) - \LL_{\epsilon})^{-1}\LL_{\epsilon}(\Id- \Pi_{\epsilon})(gv_{\epsilon,0})).
  \end{split}\end{equation}
  As $\LL_\epsilon$ is a Markov operator, we have $\lambda_{\epsilon}(0) = 1$. Additionally, we may take $\varphi_{\epsilon,0}$ to be the map $f \mapsto \intf f dm$, which implies that $\intf v_{\epsilon,0} dm = \varphi_{\epsilon,0}(v_{\epsilon,0}) = 1$. When applied to \eqref{eq:variance_for_comp_1} this yields
  \begin{equation*}
    \lambda_\epsilon^{(2)}(0) = \intf g^2v_{\epsilon,0}
    + 2g
    (\Id - \LL_{\epsilon})^{-1}\LL_{\epsilon}(\Id- \Pi_{\epsilon})(gv_{\epsilon,0}) dm.
  \end{equation*}
  We may replace $(\Id- \Pi_{\epsilon})(gv_{\epsilon,0})$ with $gv_{\epsilon,0}$, which is equivalent to setting $\intf gv_{\epsilon,0} dm =0$, to obtain
  \begin{equation}\label{eq:variance_for_comp_2}
    \lambda_\epsilon^{(2)}(0) = \intf g^2v_{\epsilon,0}
    + 2g
    (\Id - \LL_{\epsilon})^{-1}\LL_{\epsilon}(gv_{\epsilon,0}) dm,
  \end{equation}
  which is the approximation used in the computation of the variance.
  When $\epsilon = 0$ the expression for \eqref{eq:variance_for_comp_2} is equal to the expression for $\lambda^{(2)}(0)$ from \cite[Corollary III.11]{HennionHerve}, which contains an alternative derivation. Proposition \ref{prop:2nd_deriv_twisted_eigenvalue_formula} provides a more general derivative-free expression for $\lambda^{(2)}_\epsilon(z)$ for any $z \in D_\theta$ and small $\epsilon$, in the case where $\LL_\epsilon(z)$ need not be a Markov operator.

  \begin{remark}\label{remark:stability_of_variance}
    The expression (\ref{eq:variance_for_comp_2}) provides an alternative approach for proving the stability of the variance, which has been exploited previously (e.g. in \cite{gouezel2006banach}):
    each of the terms on the right hand side of (\ref{eq:variance_for_comp_2}) may be approximated using the results from \cite{keller1999stability}.
   In contrast, our proof for the stability of the rate function in the next section requires uniform control of $\lambda_\epsilon(z)$ for $z$ in a real neighbourhood of $0$, for which the theory developed in \cite{keller1999stability} is insufficient and, to the best knowledge of the authors, a result such as Theorem \ref{theorem:convergence_of_eigendata_derivs} is required.
  \end{remark}

  \subsection{Stability and estimation of the rate function}
  \label{subsec:stability_estimation_of_rate_function}

  We begin with the definition of the convex conjugation, which is also known as the Legendre-Fenchel transform.
  \begin{definition}\label{def:convex_conjugate}
    The \emph{convex conjugate} of a function $f : I \to \R$, where $I \subseteq \R$ is an interval, is the function $f^* : \R \to \R$ defined by
    \begin{equation*}
      f^*(y) = \sup_{x \in I} \, (x y - f(x)).
    \end{equation*}
  \end{definition}

  Recall $\theta$ from Corollary \ref{thm:application_to_ng_method} and let $V$ be a closed interval contained in $(-\theta, \theta)$. For the purpose of proving stability of the rate function we assume that each $\LL_\epsilon$ is a positive operator. Fix $\epsilon \in [0, \epsilon_V]$ and $z \in V$. As $g$ is real valued, $\LL_\epsilon(z)$ is therefore also a positive operator.
  It follows that $\rho(\LL_\epsilon(z)) = \lambda_\epsilon(z)$ is non-negative \cite[Proposition V.4.1]{SchaeferHelmutH1974Blap}.
  In view of the remark at the end of Section \ref{sec:abstract_twist_stability}, we have $\abs{\lambda_\epsilon(z)} > 0$ and so $\lambda_\epsilon(z)$ is positive.

  For each $\epsilon \in [0, \epsilon_V]$ let $\Lambda_{\epsilon,V}: V \to \R$ be defined by $\Lambda_{\epsilon,V}(z) = \ln(\lambda_{\epsilon}(z))$, and note that
  \begin{equation*}
    \Lambda_{\epsilon,V}^*(s) = \sup_{z \in V}  \, (sz - \ln(\lambda_{\epsilon}(z))).
  \end{equation*}
  Compare this with the rate function $r_g$ for the LDP (Theorem \ref{aiminothm}), which is defined in \eqref{eq:rate_function_twisted_eigenvalue} to be the convex conjugate of the function $\ln(\lambda_0(\cdot)): (-\theta, \theta) \to \R$.
  Our main result for this section is the following.

  \begin{theorem}\label{thm:convergence_of_rate_functions}
    Suppose that we are in the setting of Corollary \ref{thm:application_to_ng_method}, with the additional requirement that each $\LL_\epsilon$ is a positive operator.
    For every closed interval $V \subseteq (-\theta,\theta)$ we have the following convergence of $\Lambda_{\epsilon,V}^*$ to $\Lambda_{0,V}^*$ as $\epsilon \to 0$: with $\eta(V)$ as supplied by Theorem \ref{theorem:convergence_of_eigendata_derivs} there exists $C_V$ such that for $\epsilon \in [0, \epsilon_V]$ one has
    \begin{equation*}
      \sup_{y \in \R} \abs{\Lambda_{\epsilon,V}^*(y) - \Lambda_{0,V}^*(y)} \le C_V \tau(\epsilon)^{\eta(V)}.
    \end{equation*}
    \begin{proof}
      By Corollary \ref{thm:application_to_ng_method} for every $\epsilon \in [0,\epsilon_V]$ we have
      \begin{equation*}
        \sup_{z \in V} \abs{\lambda_\epsilon(z) - \lambda_0(z)} \le O_0 \tau(\epsilon)^{\eta(V)}.
      \end{equation*}
      Moreover, $\inf_{z \in V, \epsilon \in [0,\epsilon_V]} \lambda_\epsilon(z) > 0$ by the remark at the end of Section \ref{sec:abstract_twist_stability}.
      Hence $z \mapsto \Lambda_{\epsilon,V}(z)$ is well-defined for every $\epsilon \in [0,\epsilon_V]$ and
      \begin{equation*}
        \abs{\Lambda_{0,V}(z) - \Lambda_{\epsilon,V}(z)} \le \left(\inf_{z \in V, \epsilon \in [0,\epsilon_V]} \lambda_\epsilon(z)\right)^{-1} O_0 \tau(\epsilon)^{\eta(V)} := C_V  \tau(\epsilon)^{\eta(V)}.
      \end{equation*}

      Let $y \in \R$. Since $V$ is compact and $z \mapsto zy - \Lambda_{0,V}(z)$ is continuous on $V$ there exists $z_y \in V$ such that
      \begin{equation*}
        \Lambda_{0,V}^*(y) = z_y y - \Lambda_{0,V}(z_y).
      \end{equation*}
      Hence,
      \begin{equation*}
        \abs{\Lambda_{\epsilon,V}(z_y) - (z_y y - \Lambda_{0,V}^*(y))} = \abs{\Lambda_{\epsilon,V}(z) - \Lambda_{0,V}(z)}  \le C_V  \tau(\epsilon)^{\eta(V)},
      \end{equation*}
      and so
      \begin{equation*}\begin{split}
        \Lambda_{0,V}^*(y) &\le C_V  \tau(\epsilon)^{\eta(V)} + z_y y - \Lambda_{\epsilon,V}(z_y)\\
        &\le  C_V  \tau(\epsilon)^{\eta(V)} + \sup_{z \in V} \, (z y - \Lambda_{\epsilon,V}(z)) \\
        &= C_V  \tau(\epsilon)^{\eta(V)} + \Lambda_{\epsilon,V}^*(y).
      \end{split}\end{equation*}
      Applying the same argument but reversing the roles of $\epsilon$ and $0$ yields
      \begin{equation*}
        \abs{\Lambda_{0,V}^*(y) - \Lambda_{\epsilon,V}^*(y)} \le C_V  \tau(\epsilon)^{\eta(V)},
      \end{equation*}
      which concludes the proof.
    \end{proof}
  \end{theorem}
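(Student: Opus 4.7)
The plan is to transfer the uniform $\tau(\epsilon)^{\eta(V)}$-rate for $\lambda_\epsilon(\cdot) \to \lambda_0(\cdot)$ supplied by Corollary \ref{thm:application_to_ng_method} (i.e., Theorem \ref{theorem:convergence_of_eigendata_derivs} applied at order $n=0$) first to a uniform rate for $\Lambda_{\epsilon,V} \to \Lambda_{0,V}$, and then, via an elementary sup-swap argument, to a uniform rate for the convex conjugates $\Lambda_{\epsilon,V}^*$. Since $V$ is a closed real interval with $V \subseteq (-\theta,\theta)$, it is contained in the region where the twisted spectral decomposition from Theorem \ref{theorem:convergence_of_eigendata_derivs} is available, so the $\tau(\epsilon)^{\eta(V)}$ bound on $|\lambda_\epsilon(z) - \lambda_0(z)|$ is uniform for $z \in V$.

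Before taking logarithms I need to verify that $\Lambda_{\epsilon,V}(z) = \ln\lambda_\epsilon(z)$ is well-defined on $V$ and that $\ln$ is Lipschitz on the range of the $\lambda_\epsilon(z)$ uniformly in $\epsilon \in [0,\epsilon_V]$ and $z \in V$. Positivity of $\LL_\epsilon$ together with the real-valuedness of $g$ implies that $\LL_\epsilon(z) = \LL_\epsilon M_g(z)$ is a positive operator for real $z$, so by Perron-Frobenius theory (as cited after Corollary \ref{thm:application_to_ng_method}) the leading eigenvalue $\lambda_\epsilon(z)$ coincides with $\rho(\LL_\epsilon(z))$ and is non-negative; combined with the remark at the end of Section \ref{sec:abstract_twist_stability} giving $\inf_{z \in V, \epsilon \in [0,\epsilon_V]} |\lambda_\epsilon(z)| > 0$, this produces strict positivity with a uniform lower bound. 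A Lipschitz estimate for $\ln$ on the interval bounded away from $0$ then converts the rate for $\lambda_\epsilon$ into a rate $|\Lambda_{\epsilon,V}(z) - \Lambda_{0,V}(z)| \le C_V \tau(\epsilon)^{\eta(V)}$, uniform in $z \in V$.

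The final step is to push this uniform bound through the convex conjugation, which is a standard observation: if two continuous functions differ by at most $\delta$ uniformly on $V$, then their suprema over $V$ of $zy - (\cdot)$ differ by at most $\delta$, so $\sup_{y \in \R} |\Lambda_{\epsilon,V}^*(y) - \Lambda_{0,V}^*(y)| \le \delta$. To make this rigorous I would, for a given $y \in \R$, use compactness of $V$ and continuity of $z \mapsto zy - \Lambda_{0,V}(z)$ to select a maximiser $z_y \in V$, substitute the pointwise log-estimate at $z_y$, and bound the result by $\Lambda_{\epsilon,V}^*(y) + C_V \tau(\epsilon)^{\eta(V)}$; the reverse inequality follows by symmetry, giving the required uniform-in-$y$ estimate.

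The only genuine obstacle is the verification that one may actually take logarithms with uniformly controlled Lipschitz constant, which requires both the positivity hypothesis on $\LL_\epsilon$ and the uniform lower bound on $\lambda_\epsilon(z)$; the remainder of the argument is routine manipulation of the Legendre-Fenchel transform. It is worth emphasising that we truly need uniform closeness of $\lambda_\epsilon$ to $\lambda_0$ along a \emph{real} neighbourhood of $0$ (not merely at $0$), and this is precisely what Theorem \ref{theorem:convergence_of_eigendata_derivs} supplies and what Remark \ref{remark:stability_of_variance} flags as unavailable from \cite{keller1999stability} alone.
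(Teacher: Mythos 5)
Your proposal is correct and follows essentially the same route as the paper's proof: the uniform H\"older rate for $\lambda_\epsilon(\cdot)$ on $V$ from Theorem \ref{theorem:convergence_of_eigendata_derivs}, positivity of $\LL_\epsilon$ plus the uniform lower bound on $|\lambda_\epsilon(z)|$ to get a Lipschitz estimate for $\ln$, and the standard sup-swap (pick a maximiser $z_y$ by compactness, substitute, use symmetry) to transfer the uniform bound through the Legendre--Fenchel transform.
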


  While Theorem \ref{thm:convergence_of_rate_functions} confirms that $\Lambda_{\epsilon,V}^*$ and $\Lambda_{0,V}^*$ are close, we don't know whether $\Lambda_{0,V}^*$ and $r_g$ are necessarily close. We finish this section with a Proposition that clarifies this relationship, but first we must prove that $z \mapsto \ln(\lambda_0(z))$ is convex on $(-\theta, \theta)$.
  While it is known that $z \mapsto \ln(\lambda_0(z))$ is convex in a small real neighbourhood of $0$ \cite[Proposition VIII.3]{HennionHerve}, we are not aware of a proof for convexity on its entire domain, which in this case is $(-\theta, \theta)$.

  \begin{lemma}\label{lemma:concave_ln_lambda}
    The function $z \mapsto \ln(\lambda_0(z))$ is convex on $(-\theta, \theta)$.
    \begin{proof}
      Adapting arguments from \cite[Proposition 5, Proposition 6]{aimino2015note}, for $z \in (-\theta, \theta)$ we have
      \begin{equation}\label{eq:lemma:concave_ln_lambda_1}
        \ln(\lambda_0(z)) = \lim_{n \to \infty} \frac{1}{n} \ln \intf e^{z S_n(g)} dm.
      \end{equation}
      Let $a,b \in (-\theta, \theta)$ and $t \in [0,1]$. Then \eqref{eq:lemma:concave_ln_lambda_1} yields
      \begin{equation*}
        \ln(\lambda_0(at + (1-t)b)) = \lim_{n \to \infty} \frac{1}{n} \ln \intf e^{S_n(g)(at + (1-t)b)} dm.
      \end{equation*}
      Recalling that $e^{S_n(g)z}$ is positive for $z \in \R$ and applying H{\"o}lder's inequality with conjugate exponents $1/t$ and $1/(1-t)$ yields
      \begin{equation*}\begin{split}
        \frac{1}{n} \ln \int &e^{S_n(g)(at + (1-t)b)} \mathrm{d}m = \frac{1}{n} \ln \intf \left(e^{S_n(g)a}\right)^t \left(e^{S_n(g)b}\right)^{1-t} dm \\
        &\le \frac{1}{n} \ln \left(\intf e^{S_n(g)a} dm\right)^{t}\left(\intf e^{S_n(g)b} dm\right)^{1-t}  \\
        &= \frac{t}{n} \ln \left(\intf e^{S_n(g)a} dm\right) + \frac{1-t}{n}\ln\left(\intf e^{S_n(g)b} dm\right).
      \end{split}\end{equation*}
      Letting $n \to \infty$ yields the required inequality.
    \end{proof}
  \end{lemma}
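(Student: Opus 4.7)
The plan is to follow exactly the outline sketched in the statement: first establish the spectral/variational formula
\begin{equation*}
\ln(\lambda_0(z)) \;=\; \lim_{n \to \infty} \frac{1}{n} \ln \intf e^{z S_n(g)}\, dm
\end{equation*}
for every real $z \in (-\theta, \theta)$, and then use H{\"o}lder's inequality pointwise inside this integral to propagate convexity from the finite-$n$ level to the limit. Convexity on an interval follows once we show
$\ln(\lambda_0(ta + (1-t)b)) \le t \ln(\lambda_0(a)) + (1-t)\ln(\lambda_0(b))$ for $a,b \in (-\theta, \theta)$ and $t \in [0,1]$, which reduces to the midpoint inequality applied uniformly in $n$.

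First I would verify the spectral formula. For real $z \in (-\theta,\theta)$, the operator $\LL(z)$ is positive on the Banach lattice $B$, so by \cite[Proposition V.4.1]{SchaeferHelmutH1974Blap} its spectral radius equals $\lambda_0(z)$ and is an eigenvalue with an eigenvector $v_z \in B$ that can be chosen nonnegative, with dual eigenfunctional $\varphi_z$ that is positive. Using the quasi-compact decomposition $\LL(z)^n = \lambda_0(z)^n \Pi(z) + N(z)^n$, with $\rho(N(z)) < \lambda_0(z)$ on $D_\theta$ from Lemma \ref{lemma:quasicompact_unperturbed_operator}, and using $\LL(z)^n f = \LL^n(e^{z S_n(g)} f)$ together with $\int \LL^n h\, dm = \int h\, dm$ for the (untwisted) Perron-Frobenius operator $\LL$, one gets
\begin{equation*}
\intf e^{z S_n(g)}\, dm \;=\; \intf \LL(z)^n \mathbf{1}\, dm \;=\; \lambda_0(z)^n \intf \Pi(z)\mathbf{1}\, dm + \intf N(z)^n \mathbf{1}\, dm.
\end{equation*}
Positivity implies $\int \Pi(z)\mathbf{1}\, dm = \varphi_z(\mathbf{1}) \int v_z\, dm > 0$, so taking logs, dividing by $n$, and using $\|N(z)^n\|^{1/n} \to \rho(N(z)) < \lambda_0(z)$ gives the desired limit identity.

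Next, H{\"o}lder with conjugate exponents $1/t$ and $1/(1-t)$ applied to the integral of
\[
 e^{S_n(g)(ta + (1-t)b)} = \bigl(e^{S_n(g) a}\bigr)^t \bigl(e^{S_n(g) b}\bigr)^{1-t}
\]
yields
\begin{equation*}
\intf e^{S_n(g)(ta + (1-t)b)}\, dm \;\le\; \Bigl(\intf e^{S_n(g)a}\, dm\Bigr)^t \Bigl(\intf e^{S_n(g) b}\, dm\Bigr)^{1-t}.
\end{equation*}
Taking $\tfrac{1}{n}\ln$ of both sides and letting $n \to \infty$, the spectral formula converts this into the convexity inequality for $\ln \lambda_0$.

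The main obstacle I expect is the spectral formula itself on the \emph{full} interval $(-\theta, \theta)$: earlier references like \cite[Proposition VIII.3]{HennionHerve} only establish such identities near $0$. Making it work globally on $(-\theta, \theta)$ requires combining the uniform quasi-compact decomposition of $\LL(z)$ on $D_\theta$ from Lemma \ref{lemma:quasicompact_unperturbed_operator} with positivity of $\LL(z)$ for real $z$, in order to guarantee $\int \Pi(z)\mathbf{1}\, dm \ne 0$ so that the subleading term $\int N(z)^n \mathbf{1}\, dm$ does not dominate after taking the $\tfrac{1}{n} \log$. Once this positivity-plus-spectral-gap step is in hand, the H{\"o}lder argument is immediate, and convexity follows.
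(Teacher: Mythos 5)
Your proposal follows the same approach as the paper: establish the spectral formula $\ln\lambda_0(z) = \lim_{n\to\infty}\frac{1}{n}\ln\int e^{zS_n(g)}\,dm$ on $(-\theta,\theta)$, then apply H\"older's inequality to the integrand $e^{S_n(g)(ta+(1-t)b)} = (e^{S_n(g)a})^t(e^{S_n(g)b})^{1-t}$ and pass to the limit. The only difference is cosmetic: the paper simply cites Aimino--Vaienti for the limit identity, while you sketch a direct derivation from the quasi-compact decomposition $\LL(z)^n = \lambda_0(z)^n\Pi(z) + N(z)^n$ together with positivity of $\LL(z)$ for real $z$ to ensure $\int\Pi(z)\mathbf{1}\,dm > 0$; this is exactly the content hidden behind the paper's citation, and your attention to the fact that the identity is needed on all of $(-\theta,\theta)$ (not just near $0$) is the right concern and is resolved as you say by Lemma \ref{lemma:quasicompact_unperturbed_operator}.
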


  \begin{proposition}\label{prop:estimation_of_truncated_rate_func}
    Suppose that we are in the setting of Corollary \ref{thm:application_to_ng_method}. If $V \subseteq (-\theta,\theta)$ is a closed interval then $r_g$ and $\Lambda_{0,V}^*$ are equal on $\Lambda_{0}'(V)$. Consequently, for every compact set $W \subseteq \Lambda_{0}'(-\theta,\theta)$ there exists a closed interval $V_W \subseteq (-\theta,\theta)$ such that $r_g$ and $\Lambda_{0,V_W}^*$ are equal on $W$.
    \begin{proof}
      For brevity let $\Lambda_0(z) = \ln(\lambda_0(z))$. By the definition of the convex conjugate we have
      \begin{equation}\label{eq:estimation_of_truncated_rate_func_1}
        \Lambda_{0,V}^*(y) = \sup_{z \in V} \, ( yz - \Lambda_{0}(z)).
      \end{equation}
      Note that for each $y \in \R$ the function $z \mapsto yz - \Lambda_{0,V}(z)$ that is defined on $(-\theta, \theta)$ is concave by Lemma \ref{lemma:concave_ln_lambda}. By differentiating, we therefore see that if $y \in \Lambda_{0}'(V)$ then the supremum in \eqref{eq:estimation_of_truncated_rate_func_1} is attained by some $z$ satisfying $\Lambda_0'(z) = y$. Hence, for $y \in \Lambda_{0}'(V)$ we have
      \begin{equation*}
          \Lambda_{0,V}^*(y) = y(\Lambda_{0}')^{-1}(y) - \Lambda_{0}(\Lambda_{0}')^{-1}(y)).
      \end{equation*}
      The same argument shows that the same formula holds for $r_g$ on $\Lambda_{0}'(V)$, and so the two functions are equal on $\Lambda_{0}'(V)$, which is a closed interval as $\Lambda_{0}'$ is monotonic. For the second conclusion simply take $V_W$ to be the inverse image under $\Lambda_{0}'$ of the convex hull of $W$, which is clearly a closed interval.
    \end{proof}
  \end{proposition}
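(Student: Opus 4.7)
My plan is to exploit the convexity of $\Lambda_0(z) := \ln \lambda_0(z)$ on $(-\theta, \theta)$ established in Lemma \ref{lemma:concave_ln_lambda}, which makes $z \mapsto yz - \Lambda_0(z)$ concave on $(-\theta, \theta)$ for every fixed $y \in \R$. The crucial observation is that for a concave function on an open interval, any interior critical point is automatically a global maximizer, so the convex conjugate can be evaluated by solving the first-order condition $\Lambda_0'(z) = y$.

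First, I would fix $y \in \Lambda_0'(V)$ and choose $z_y \in V$ with $\Lambda_0'(z_y) = y$. Then $z_y$ is a critical point of the concave function $z \mapsto yz - \Lambda_0(z)$, hence a global maximizer on $(-\theta, \theta)$. Combined with the trivial inequality $\Lambda_{0,V}^*(y) \le r_g(y)$ coming from $V \subseteq (-\theta, \theta)$, this yields
\begin{equation*}
    r_g(y) = y z_y - \Lambda_0(z_y) \le \Lambda_{0,V}^*(y) \le r_g(y),
\end{equation*}
so equality holds throughout $\Lambda_0'(V)$, proving the first claim.

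For the consequence, I would set $V_W := (\Lambda_0')^{-1}(\conv(W))$. Since $\Lambda_0'$ is monotone on $(-\theta, \theta)$ (as $\Lambda_0$ is convex), the preimage of an interval under $\Lambda_0'$ is an interval. To see $V_W$ is closed and lies strictly inside $(-\theta, \theta)$, I would note that $\Lambda_0'((-\theta, \theta))$ is itself an interval containing the compact set $W$, so $\conv(W)$ is a compact subinterval bounded away from the endpoints of $\Lambda_0'((-\theta, \theta))$; continuity of $\Lambda_0'$ then forces $V_W$ to be bounded away from $\pm \theta$. By construction every $y \in W$ admits a preimage in $V_W$, so $W \subseteq \Lambda_0'(V_W)$, and the first part gives equality of $r_g$ and $\Lambda_{0,V_W}^*$ on $W$.

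The main conceptual hinge is the global-maximizer step in the first part, which rests entirely on Lemma \ref{lemma:concave_ln_lambda}: without convexity of $\Lambda_0$, the critical point $z_y$ would only be a local extremum and the supremum over the larger interval $(-\theta, \theta)$ could strictly exceed the supremum over $V$, breaking equality. A minor bookkeeping point is ensuring $V_W$ does not touch the boundary of $(-\theta, \theta)$, which is handled by compactness of $W$ inside the open image $\Lambda_0'((-\theta, \theta))$ and the monotone continuity of $\Lambda_0'$.
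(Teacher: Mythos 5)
Your proposal is correct and follows essentially the same route as the paper: both arguments hinge on the concavity of $z \mapsto yz - \Lambda_0(z)$ (from Lemma \ref{lemma:concave_ln_lambda}) to identify the interior critical point $\Lambda_0'(z) = y$ as the global maximizer, and both take $V_W = (\Lambda_0')^{-1}(\conv(W))$. Your sandwich inequality $r_g(y) \le \Lambda_{0,V}^*(y) \le r_g(y)$ is a slightly tidier way to close the first part than the paper's explicit evaluation of both conjugates, but the underlying mechanism is identical.
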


  \section{Stability and approximation of statistical laws for piecewise expanding interval maps}
  \label{sec:piecewise_expanding_one_dim}

  In this section we will demonstrate the utility of our theory by applying the results of Sections \ref{sec:abstract_twist_stability} and \ref{sec:application_to_NG} to piecewise expanding interval maps. This setting is classical, so we refer the reader to \cite{boyarsky1997laws, baladi2000positive} for further details. We obtain stability of the variance and rate function under standard classes of perturbations, including perturbations arising from the Ulam numerical scheme \cite{li}.

  Denote by $(\BV,\norm{\cdot})$ the Banach space of functions of bounded variation on $[0,1]$, equipped with the norm $\norm{\cdot} = \Var(\cdot) + \lnorm{\cdot}$.
  We call $T: [0,1] \to [0,1]$ a Lasota-Yorke map if there are $0=a_0<a_1<\cdots< a_r=1$ and $\gamma > 1$ such that for each $i=1,\ldots,r$, $T$ is $C^1$ on $(a_{i-1},a_i)$ with a $C^1$ extension to $[a_{i-1},a_i]$, $\abs{T'_{|(a_{i-1},a_i)}}\ge \gamma$ and $1/\abs{T'_{|(a_{i-1},a_i)}}\in \BV$.
  It is classical that $\BV$ satisfies condition (S) (Definition \ref{def:strong_space_condition}) and that for a topologically mixing Lasota-Yorke map $T$ the associated Perron-Frobenius operator is simple, quasi-compact and has spectral radius 1. Thus $T$ satisfies a CLT and LDP by Theorem \ref{aiminothm} for appropriate observables.
  In the following corollary we confirm the stability of statistical parameters in this setting, as a consequence of Theorems \ref{thm:application_to_ng_method}, \ref{thm:stability_of_variance}, and \ref{thm:convergence_of_rate_functions} and Proposition \ref{prop:estimation_of_truncated_rate_func}.
  \begin{corollary}
  \label{LYthm}
  Let $T$ be a topologically mixing Lasota-Yorke map, $\mu$ its invariant measure, $\{\LL_\epsilon\}_{\epsilon\ge 0}$ a family of operators satisfying (KL) where $\LL_0 = \LL$ is the Perron-Frobenius operator associated with $T$, and $g \in \BV$ be a real-valued observable such that $\intf g d \mu = 0$.
  Then each of the following holds as $\epsilon \to 0$:
  \begin{enumerate}
    \item Theorem \ref{thm:stability_of_variance}: the approximate variance $\lambda_\epsilon^{(2)}(0)$ converges to the true variance $\sigma^2_g$.
    \item Theorem \ref{thm:convergence_of_rate_functions}: if each $\LL_\epsilon$ is positive then for every compact subset $W$ of the domain of $r_g$ there exists a closed interval $V$ such that $\lim_{\epsilon \to 0} \sup_{z \in V} (sz - \ln(\lambda_\epsilon(z))) = r_g(s)$ uniformly for $s \in W$.
  \end{enumerate}
  In particular, the numerical, stochastic, and deterministic perturbations (NP), (SP), and (DP) detailed in the introduction satisfy (KL) and we obtain the corresponding approximation and stability of the variance and rate function under these perturbations.
  \end{corollary}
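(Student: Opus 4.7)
The plan is to verify that the hypotheses of Theorems \ref{aiminothm}, \ref{thm:stability_of_variance}, \ref{thm:convergence_of_rate_functions} and Proposition \ref{prop:estimation_of_truncated_rate_func} are met in the Lasota-Yorke setting, and then assemble the conclusions. First I would note that $(\BV, \norm{\cdot})$ with $\norm{\cdot} = \Var(\cdot) + \lnorm{\cdot}$ satisfies condition (S): $\lnorm{\cdot} \le \norm{\cdot}$ is immediate, Helly's selection theorem gives the required compactness, constants lie in $\BV$, $\Var(fg) \le \Var(f)\lnorm{g} + \lnorm{f}\Var(g) + \Var(f)\Var(g)$ (so $\BV$ is a Banach algebra), and $\BV$ is a vector sublattice since $\Var(\abs{f}) \le \Var(f)$. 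For a topologically mixing Lasota-Yorke map, the classical Lasota-Yorke inequality and the Ionescu-Tulcea-Marinescu theorem yield that the Perron-Frobenius operator $\LL$ on $\BV$ is quasi-compact with simple leading eigenvalue $1$, and topological mixing upgrades this to a simple quasi-compact operator in the sense of Definition \ref{def:quasicompactness}. Hence the hypotheses of Theorem \ref{aiminothm} and Corollary \ref{thm:application_to_ng_method} all hold.

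Part (1) is then immediate from Theorem \ref{thm:stability_of_variance}: since $\{\LL_\epsilon\}_{\epsilon \ge 0}$ satisfies (KL) with $\LL_0 = \LL$ a simple quasi-compact operator of spectral radius $1$ on the Banach algebra $\BV$, Theorem \ref{thm:stability_of_variance} gives $\lim_{\epsilon\to 0}\lambda_\epsilon^{(2)}(0) = \sigma_g^2$. For part (2) I would combine Proposition \ref{prop:estimation_of_truncated_rate_func} with Theorem \ref{thm:convergence_of_rate_functions} as follows. Given a compact $W$ in the (open) domain of $r_g$, Proposition \ref{prop:estimation_of_truncated_rate_func} supplies a closed interval $V \subseteq (-\theta,\theta)$ such that $r_g(s) = \Lambda_{0,V}^*(s)$ for every $s \in W$. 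Positivity of each $\LL_\epsilon$ (and hence of each $\LL_\epsilon(z)$ for real $z$) ensures by the Krein-Rutman-type argument cited before Theorem \ref{thm:convergence_of_rate_functions} that $\lambda_\epsilon(z) > 0$ for real $z \in V$ and small $\epsilon$, so $\Lambda_{\epsilon,V}(z) = \ln \lambda_\epsilon(z)$ is well defined. Theorem \ref{thm:convergence_of_rate_functions} then yields $\sup_{s \in \R} \abs{\Lambda_{\epsilon,V}^*(s) - \Lambda_{0,V}^*(s)} \le C_V \tau(\epsilon)^{\eta(V)} \to 0$, and restricting to $s \in W$ gives the desired uniform convergence $\sup_{z \in V}(sz - \ln \lambda_\epsilon(z)) \to r_g(s)$.

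For the final assertion about (NP), (SP), and (DP), I would invoke the references already identified in the introduction. For (DP), Keller's ``Skorokhod''-type metric estimates \cite[\S3]{keller1982stochastic} directly give \ref{en:kl_conv}, \ref{en:kl_l1_bound}, and \ref{en:kl_ly_bound} with respect to $\lnorm{\cdot}$ and $\Var(\cdot) + \lnorm{\cdot}$; for (SP) under the bistochastic convergence hypothesis, \cite[Corollary 17]{keller1982stochastic} yields the uniform Lasota-Yorke inequality and the convergence in triple norm; and for (NP), Ulam's scheme on $\BV$ fits the framework of \cite{li} (with variation-contracting conditional expectations providing \ref{en:kl_ly_bound} and a direct estimate of $\tnorm{\mathbb{E}_n\LL - \LL}$ providing \ref{en:kl_conv}). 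In each case one has $\LL_\epsilon$ positive (conditional expectation and Markov kernels preserve positivity, and $\LL$ itself is positive), so the positivity hypothesis needed for the rate function conclusion is satisfied.

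The only piece requiring any genuine care is the identification $r_g = \Lambda_{0,V}^*$ on $W$, which is exactly the content of Proposition \ref{prop:estimation_of_truncated_rate_func}; the rest is verification and citation. I would expect no obstacles beyond being explicit about which references supply each of the three (KL) conditions for (NP), (SP), (DP) in the $\BV$ setting.
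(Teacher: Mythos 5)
Your proposal is correct and takes essentially the same route as the paper. The paper gives no explicit proof of Corollary \ref{LYthm}: it states in the paragraph preceding it that (S) holds classically for $\BV$ and that $\LL$ is a simple quasi-compact operator of spectral radius $1$ for a topologically mixing Lasota-Yorke map, then asserts the corollary as a consequence of Theorem \ref{thm:application_to_ng_method}, Theorems \ref{thm:stability_of_variance} and \ref{thm:convergence_of_rate_functions}, Proposition \ref{prop:estimation_of_truncated_rate_func}, and (for (NP), (SP), (DP)) the references in the introduction and Section \ref{sec:1dulampert} --- exactly the checklist you work through. (Minor point: the Banach-algebra bound you quote for $\Var(fg)$ isn't the exact standard inequality, but any of the usual forms gives $(S4)$, so this does not affect the argument.)
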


  \begin{remark}
    Item 2 of Corollary \ref{LYthm} has been proven before for perturbations of type (DP) and (NP) in  \cite{keller2008continuity} and \cite{bahsoun2016rigorous}, respectively.
  \end{remark}

  The map we consider for the numerics in the remainder of this section is the non-Markov piecewise affine map with $a=2.1$.
  \begin{equation}
  \label{doubletenteqn}
  T_a(x)=\left\{
           \begin{array}{ll}
             ax, & \hbox{$0\le x<1/4$;} \\
             -a(x-1/2), & \hbox{$1/4\le x<1/2$;} \\
             -a(x-1/2)+1, & \hbox{$1/2\le x<3/4$;} \\
             a(x-1)+1, & \hbox{$3/4\le x\le 1$.}
           \end{array}
         \right.
  \end{equation}
  By standard arguments, we obtain $\alpha=1/1.05<1$ in the Lasota-Yorke inequality for $\LL$ and thus $\LL$ is quasi-compact.
  Moreover, it is clear from the graph of $T$ (Figure \ref{fig:doubletent}, upper left) that forward images of any interval $I\subset [0,1]$ eventually cover all of $[0,1]$;  thus, the eigenvalue 1 of $\LL$ is simple.

  \captionsetup{width=\linewidth}
  \begin{center}
    \includegraphics[width=.45\linewidth]{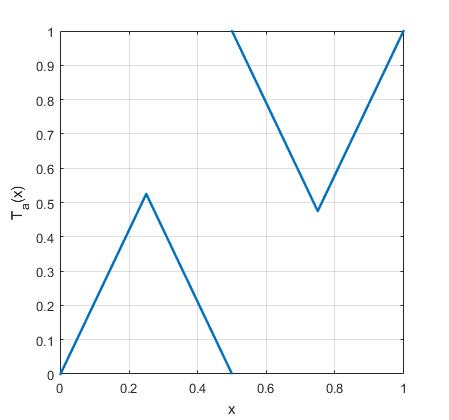}
    \includegraphics[width=.45\linewidth]{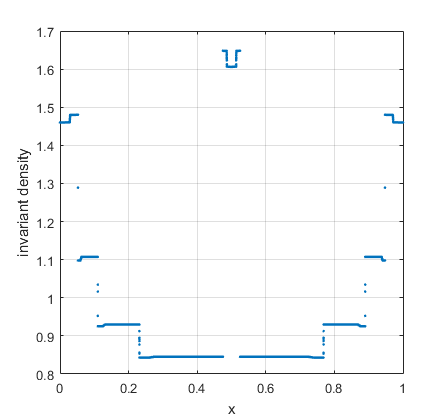}\\
    \includegraphics[width=.45\linewidth]{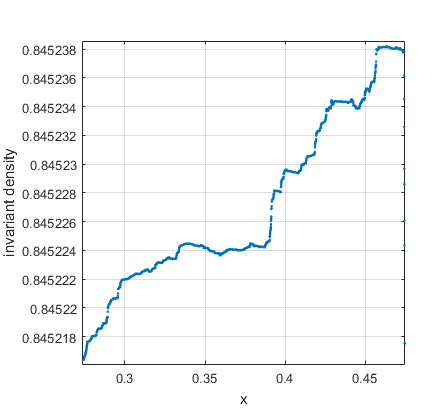}
    \includegraphics[width=.45\linewidth]{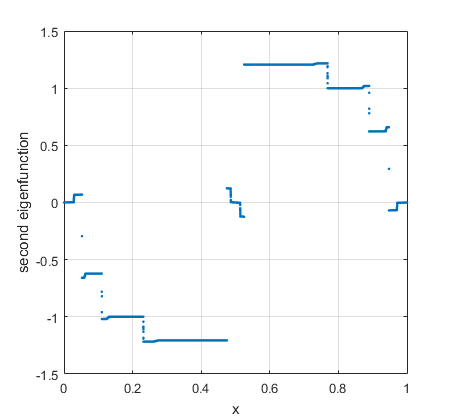}
    \captionof{figure}{Graph of $T_a$ (upper left),  an approximation of the invariant density with $n=25000$ (upper right), a zoom of an apparent ``flat'' section of the invariant density showing fine structure (lower left), and an approximation of the second eigenfunction with eigenvalue $\lambda_2\approx 0.8079$ (lower right).}\label{fig:doubletent}
  \end{center}
  Note that the dynamics of $T_a$ for $a\gtrapprox 2$ has infrequent transitions between the left and right halves of the unit interval;  in such a situation, these sets are sometimes called almost-invariant sets \cite{DJ99,DFS01}.
  We select observables $g$ (taking values approximately in the range $[-1,1]$) that emphasise this structure to varying extents, and illustrate the combined effects of the dynamics and the observable on variances and rate functions;  see Figure \ref{fig:ratefuncs} for graphs of the various $g$.
  For example, the Birkhoff sums of $g(x)=\mathbf{1}_{[0,1/2]}-\mathbf{1}_{(1/2,1]}$ will typically take longer to converge because of frequent long sequences of similar $g$ values (in this case either $1$ or $-1$).
  On the other hand, the observable $g(x)=\cos(2\pi x)-0.0614$ is not strongly correlated with the almost-invariant dynamics and one expects a more rapid convergence of Birkhoff sums.
  These arguments are reflected in the table of variances, Table \ref{tab:variance}, and the graph of rate functions, Figure \ref{fig:ratefuncs} (right).

  \subsection{Numerical schemes}
  \label{sec:1dulampert}

  Our choice of numerical scheme is dictated by the class of map.
  Because we are considering general (non-Markov) Lasota-Yorke maps, the natural choice of Banach space is $\BV$, with the weak and strong norms being the $L^1$ and $\BV$ norms, respectively.
  Since the eigenfunctions of $\LL$ can be discontinuous (see Figure \ref{fig:doubletent} upper right and lower right), we use locally supported functions for our approximation space, and in particular, locally constant functions, leading to the well-known Ulam scheme \cite{ulam}.
  If, on the other hand, we restricted ourselves to globally differentiable, full-branched maps (a smaller and better-behaved class), then it would be natural to work with $C^r$ functions and use a globally supported basis consisting of Chebyshev polynomials (if the phase space is an interval) or trigonometric polynomials / Fourier modes (if the phase space is a circle);  these bases will exploit the smoothness of the map to produce commensurately more accurate estimates.

  For a partition of $[0,1]$ into subintervals $I_1,\ldots,I_n$, setting $\mathcal{B}_n=\mbox{span}\{\mathbf{1}_{I_i}:1\le i\le n\}$, we define the conditional expectation operator $\mathbb{E}_n:L^1([0,1])\to \mathcal{B}_n$ by
  \begin{equation}
  \label{ulamproj}
  \mathbb{E}_nf=\sum_{i=1}^n \frac{\int_{I_i}f\ dm}{m(I_i)}\mathbf{1}_{I_i}.
  \end{equation}
  It is well known (e.g.\ \cite{li}) that the matrix representation of $\mathbb{E}_n\LL$ on $\mathcal{B}_n$ is
  \begin{equation}
  \label{ulammatrix}
  P_{ij}=\frac{m(I_i\cap T^{-1}I_j)}{m(I_j)},
  \end{equation}
  under multiplication on the left.
  In our experiments, we use equipartitions of $[0,1]$ of increasing cardinality $n$.
  Putting $\epsilon=1/n$, we set $\LL_\epsilon = \LL_{1/n}=\mathbb{E}_n\LL$.
  The property \ref{en:kl_conv} is satisfied;  see e.g.\ the discussion in \S16--18 \cite{keller1982stochastic}.
  The operators $\LL_{1/n}$ are Markov for every $n$ and  therefore satisfy \ref{en:kl_l1_bound} and are positive.
  The expectation operator $\mathbb{E}_n$ reduces variation, and thus \ref{en:kl_ly_bound} is also satisfied.
  Our estimate of the twisted operator $\LL(z)$ will be the operator $\LL_{1/n}(z):=\LL_{1/n}M_{g_n}(z)$, where $g_n=\sum_{i=1}^n g((i-1/2)/n)\mathbf{1}_{I_i}$ takes the value of $g$ at the midpoint\footnote{Strictly speaking, Ulam's method for twisted transfer operators will involve integrals of $g$, which can be numerically evaluated. We have chosen the above midpoint approximation of $g$ for computational convenience;  note that the midpoint rule is the same order of accuracy as the trapezoidal method of numerical quadrature and often slightly more accurate (errors are about a factor 1/2 smaller).  We additionally computed the values in Table \ref{tab:variance} with an ``exact'' implementation of Ulam and the errors due to the midpoint estimate of $g$ were several orders of magnitude smaller than the errors due to the overall Ulam discretisation.} of each $I_i$, $i=1,\ldots,n$.

  \subsection{Estimating the variance}
  \label{sec:estimating_variance_1d}

  We numerically evaluate the expression (\ref{eq:variance_for_comp_2}) for $\lambda^{(2)}(0)$.
  The term $(\Id - \LL_{\epsilon})^{-1}\LL_{\epsilon}(gv_{\epsilon,0})$ is numerically determined by solving the single linear system of equations $(\Id - \LL_{\epsilon})v'(0)=\LL_{\epsilon}(gv_{\epsilon,0})$ for the unknown $v'(0)$ (i.e. $\restr{\frac{d}{dz}v}{z=0}$), restricting $v'(0)$ to the co-dimension 1 subspace of zero-Lebesgue-mean functions.
  The MATLAB function for computing the variance is given in Appendix \ref{app:c}.
  \begin{small}
  \begin{table}[hbt]
  \begin{center}
  \begin{tabular}{ccccc}
  \hline
  Ulam &\multicolumn{4}{c}{Variance estimates for observable $g(x)$} \\
  subintervals  & $\cos(2\pi x)-0.0614$ & $2x-1$ & $\sin(2\pi x)$ & $\mathbf{1}_{[0,1/2]}-\mathbf{1}_{(1/2,1]}$ \\
  \hline
  200 &  0.51057 & 4.3355 & 6.5368 & 17.006 \\
  1000&  0.50496 & 4.2886 & 6.4959 & 16.859 \\
  5000&  0.50430 & 4.2871 & 6.4950 & 16.855 \\
  25000& 0.50396 & 4.2860 & 6.4936 & 16.851 \\

  \hline
  \end{tabular}
  \captionof{table}{Computed variances for four different observables and at four different Ulam resolutions.}\label{tab:variance}
  \end{center}
  \end{table}
  \end{small}

  For a transformation $T:[0,1]\to [0,1]$ let \verb"P" denote a row-stochastic Ulam matrix constructed on an equi-partition of $[0,1]$ in MATLAB's \verb"sparse" format.
  To compute an estimate of the variance \verb"v" for the observable $g(x)=\sin(2\pi x)$, use:
  \begin{verbatim}
  obs=@(x)sin(2*pi*x);
  [v,~,~,~,~] = variance(P,obs);
  \end{verbatim}

  There have been a number of prior rigorous numerical estimates of variance for interval maps.
  Bahsoun \emph{et al.} \cite{bahsoun2016rigorous}, Pollicott \emph{et al} \cite{pollicott2017rigorous}, and Wormell \cite{wormell2017spectral} develop algorithms that output an interval in which the variance is guaranteed to lie.
  Bahsoun \emph{et al.} applies to general Lasota-Yorke maps, uses Ulam's method, and employs a ``brute force'' approach of taking high powers of $\LL_{1/n}$ to achieve convergence.
  The method of \cite{pollicott2017rigorous} applies to real analytic expanding (full-branch) maps with real analytic observables, and is based on evaluations on all periodic orbits up to a certain order.
  Wormell \cite{wormell2017spectral} applies to full-branched, $C^3$ expanding maps and uses an approach most similar to ours, with computations in  Chebyshev/Fourier bases.
  In each of these papers, an interval containing the variance of the Lanford map $T(x)=2x+x(1-x)/2\pmod 1$ for the observable $g(x)=x^2$ is obtained.
  The latter two papers, exploiting the analyticity of the map $T$ and observable $g$ can achieve more accurate estimates for the same computational effort.

  In comparison to \cite{bahsoun2016rigorous} we can avoid raising the very sparse matrix $\LL_{1/n}$ to high powers (in the full-branch Lanford map studied in \cite{bahsoun2016rigorous} $\LL_{1/n}^{112}$ is computed).
  We exploit the differentiability properties of the spectral data with respect to the twist parameter (which exist even for general Lasota-Yorke maps) and preserve the high degree of sparseness of $\LL_{1/n}$, which is quickly destroyed by taking powers.
  We only need to solve a single sparse linear equation to obtain an estimate for $\lambda^{(2)}(0)$, which is related to the equation solved in \cite{wormell2017spectral}.
  In comparison to \cite{pollicott2017rigorous} and \cite{wormell2017spectral} we can treat general Lasota-Yorke maps, via the flexible choice of a locally supported basis, however, as explained above, for smoother classes of maps as in \cite{pollicott2017rigorous,wormell2017spectral}, one should adapt the basis accordingly as the Ulam basis will not be competitive with specialised approaches.
  Our variance estimates rigorously converge to the true value as $n\to \infty$; and while it is likely possible to provide an ``interval of guarantee'', as in the above methods, we have not pursued this here.

  \subsection{Estimating the rate function}
  \label{sec:est_rate_func}

  For a fixed value of $s$, we estimate $r_g(s)=-\min_z (\log \lambda(z)-zs)$ by applying MATLAB's built-in unconstrained function minimising routine \verb"fminunc" to the function $f(z)=\log\lambda(z)-sz$.
  We use the default quasi-newton algorithm option for \verb"fminunc" (we found the trust-region algorithm used slightly more iterates) and supply an expression for the first derivative of $f(z)$ with respect to $z$, namely $\phi(z)(g\lambda(z)v(z))-s$ (here $\phi(z)$ and $v(z)$ are the leading left and right, respectively, eigenvectors of $\LL(z)$);  all other settings are the defaults.
  Each evaluation of $f(z)$ requires the computation of $\lambda(z)$ (we obtain $v(z)$ at the same time) and each evaluation of $f'(z)$ requires an additional computation of $\phi(z)$.
  These two eigencomputations are made by simply repeatedly iterating $v(0)$ and $\phi(0)$ with $\LL(z)$ and $\LL(z)^*$ (and normalising), respectively until the change in the estimated eigenvalue is below a tolerance (we used $5\times 10^{-12}$).
  This is relatively efficient because the Ulam matrix approximation of $\LL(z)$ is very sparse, and we found this is also faster than using MATLAB's built-in \verb"eigs" routine to find the single leading eigenvalue.
  We estimate $r_g(s)$ on a grid of $s$ values (in our experiments $s$ ranges from 0 to 0.8 in steps of 0.01), stepping from one grid point to the next.
  We use the previous optimal $z$ as the initial seed for the quasi-newton algorithm to find the optimal $z$ for the next $s$ grid point, and found this choice results in slightly fewer quasi-newton steps than choosing a fixed initialisation.

  For a transformation $T:[0,1]\to [0,1]$ let \verb"P" denote a row-stochastic Ulam matrix constructed on an equipartition of $[0,1]$ in MATLAB's \verb"sparse" format.
  To compute estimates of the rate function $r_g(s)$, for the observable $g(x)=\sin(2\pi x)$, at $s\in[0,0.8]$ spaced 0.01 apart, and store these estimates in a vector \verb"r", use:
  \begin{verbatim}
  s=0:.01:.8;
  obs=@(x)sin(2*pi*x);
  [r,~] = rate_function(s,P,obs);
  \end{verbatim}
  The necessary MATLAB functions are given in Appendix \ref{app:c};  to run the above code to compute these 81 values of the rate function takes\footnote{On a 7th-generation intel core i5 processor.} approximately 1, 4, and 12 seconds for Ulam matrices of sizes 1000, 5000, and 25000, respectively.
  We use the same set of four observables $g$ as in the variance computations (Figure \ref{fig:ratefuncs} left).
  The corresponding rate functions are shown in Figure \ref{fig:ratefuncs} right).
  \begin{center}
    \includegraphics[height=0.4\linewidth]{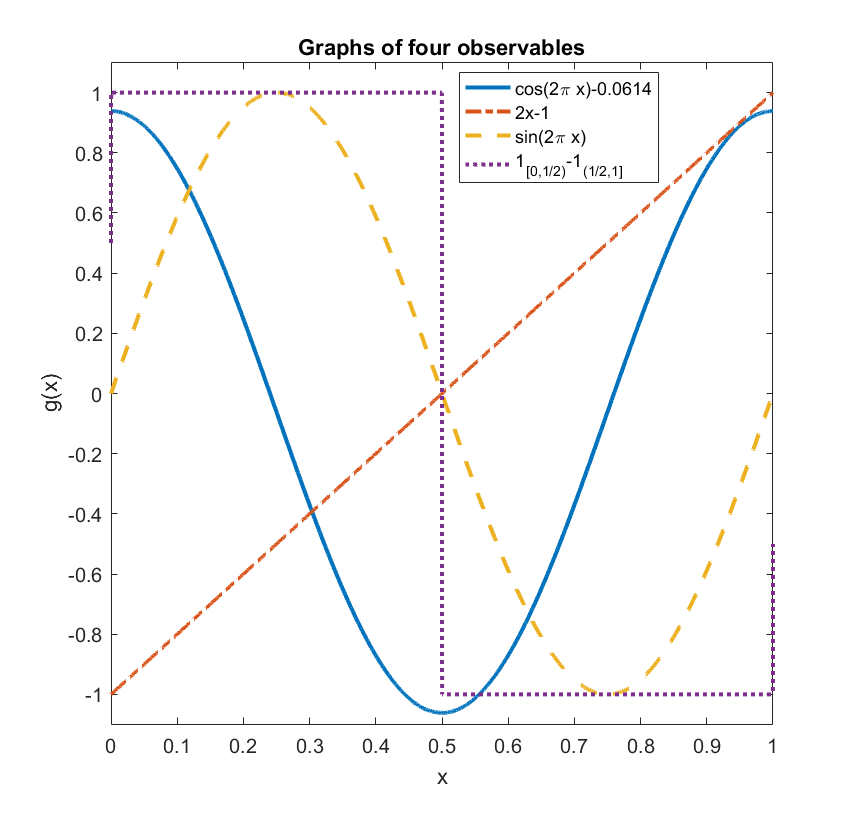}
    \includegraphics[height=0.4\linewidth]{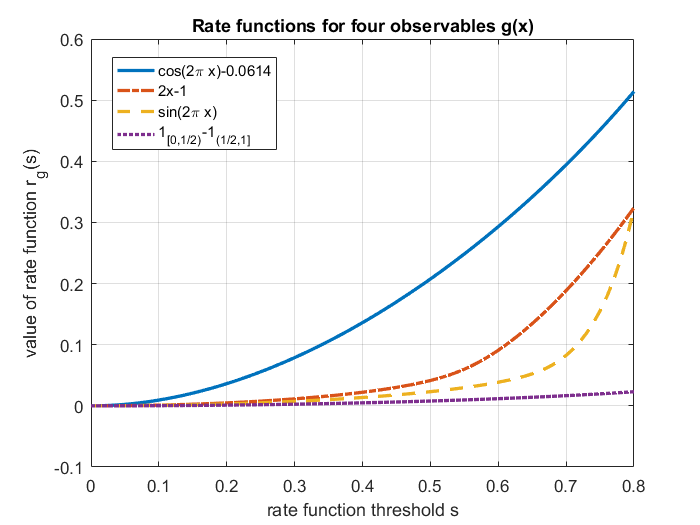}
    \captionof{figure}{Graphs of four different observables (left) and corresponding rate functions (right), computed with $n=1000$.}\label{fig:ratefuncs}
  \end{center}
  Note that the four observables $g$ yield rate functions of increasingly lower value (higher likelihood of large deviations occurring).
  This corresponds to the correlation between the value of the observable and the almost-invariant sets $[0,1/2],[1/2,1]$.
  The observable $g(x)=\cos(2\pi x)-0.0614$ is not particularly correlated with the almost-invariant sets and thus large deviations have low probability.
  On the other hand, the observables $2x-1$ and $\sin(2\pi x)$ have moderate correlation with the almost-invariant sets and large deviations have an increased probability of occurring (interestingly, there is a crossover of these two rate functions around $s=0.8$).
  The observable $g(x)=\mathbf{1}_{[0,1/2]}-\mathbf{1}_{(1/2,1]}$ is very strongly correlated with the almost-invariant sets and we see a correspondingly small rate function.
  Figure \ref{fig:errors} shows the decrease in errors relative to $n=25000$ for the calculations using $n=200, 1000$, and $5000$, typically with somewhat larger errors for larger thresholds $s$, as expected.
  \begin{center}
    \includegraphics[width=\textwidth]{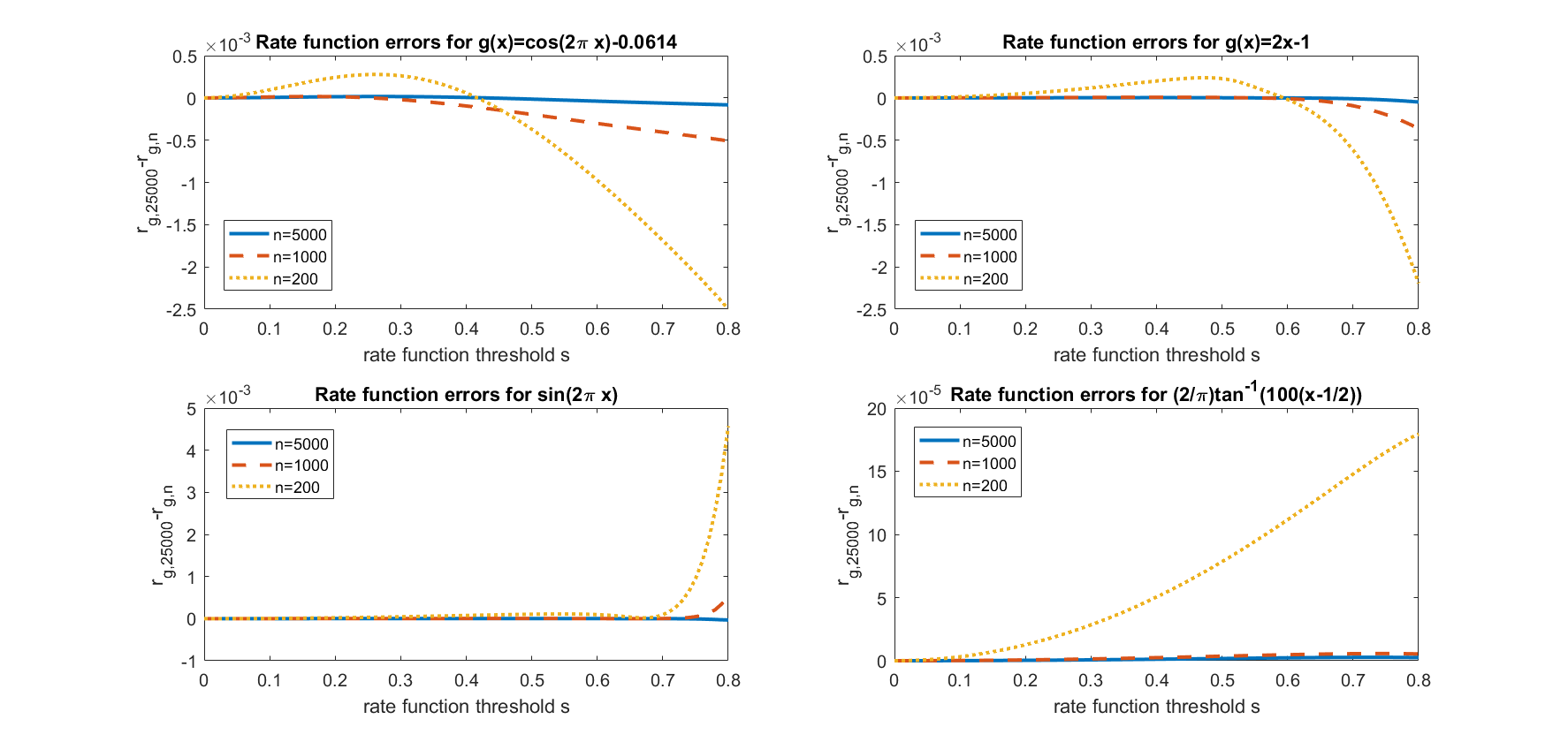}
    \captionof{figure}{Differences between rate function estimates for $n=200, 1000, 5000$ and the rate function estimate using $n=25000$.}\label{fig:errors}
  \end{center}
  We are not aware of prior rigorous numerical methods for estimating rate functions for deterministic dynamics. Prior work on estimating rate functions includes \cite{johnson_meneveau,rohwer_etal}, which use the Legendre transform but not the spectral approach we use here.

  Finally, we note that the rate of escape from the interval $[0,1/2]$ can be estimated via the observable $g(x)=\mathbf{1}_{[0,1/2]}-\mathbf{1}_{(1/2,1]}$ by computing the rate function for a threshold very close to $1$.
  With $n=1000$, and an $s$ threshold of $1-10^{-15}$, one obtains a rate function value of $0.04879016416945$ (in this experiment, the optimality tolerance in \verb"rate_function.m" was decreased to $10^{-13}$).
  Alternatively, computing the negative logarithm of the leading eigenvalue of the transfer operator restricted to the interval $[0,1/2]$ (this is particularly straightforward with an Ulam basis, see e.g.\ \cite{bahsounescape,ulamescape}), one obtains $0.04879016416943$.
  Thus, the rate function calculation and the escape rate calculation are consistent up to 13 decimal places for an Ulam matrix of size $n=1000$ (note we are not claiming accuracy of the true values up to this precision).

  \section{Stability of statistical laws for piecewise expanding maps in multiple dimensions}
  \label{sec:piecewise_expanding_multi_dim}

  In this section we consider the class of multi-dimensional piecewise expanding maps studied in \cite{saussol2000absolutely} by Saussol. Using the fact that  the space of quasi-H{\"o}lder functions satisfies condition (S) -- in particular, it is a Banach algebra -- it is shown in \cite{aimino2015note} that such maps obey a CLT and LDP for quasi-H{\"o}lder observables via the Nagaev-Guivarc'h method. Using the theory developed in Section \ref{sec:application_to_NG}, we prove that the invariant density, variance and rate function of these maps are stable with respect to perturbations satisfying (KL). We then give two examples of classes of perturbations satisfying (KL) in this setting: stochastic perturbations and perturbations arising via Ulam's method.

  Let $d \ge 2$ and denote by $m$ the Lebesgue measure on $\R^d$. Let $\Omega \subseteq \R^d$ be compact and satisfy $\cl{\intr(\Omega)} = \Omega$. Without loss of generality we may assume that $m(\Omega) = 1$. Let $T: \Omega \to \Omega$ be a piecewise expanding map in the sense of \cite{saussol2000absolutely}. To prevent clashes with our notation, we denote the constants $\alpha$ and $\epsilon_0$ associated with $T$ in \cite{saussol2000absolutely} by $\beta$ and $\eta_0$, respectively. We note that $\beta \in (0, 1]$ and $\eta_0 > 0$.

  We now define the space of quasi-H{\"o}lder functions, reproducing some of \cite[Section 4]{aimino2015note} for the reader's convenience. Let $A \subseteq \R^d$ be an arbitrary Borel set. For each $f \in L^1(\R^d)$ the oscillation of $f$ over $A$ is defined to be
  \begin{equation*}
    \osc{f, A} = \esup_{(y_1,y_2) \in A\times A} \abs{f(y_1) - f(y_2)},
  \end{equation*}
  where the essential supremum is taken with respect to the product measure $m \times m$ on $A \times A$. For every $f \in L^1(\R^d)$ and $\eta > 0$ the function $x \mapsto \osc{f, B_\eta(x)}$ is well defined and lower-semicontinuous, and therefore also measurable. Let
  \begin{equation*}\begin{split}
    &\abs{f}_{\beta} = \sup_{0 < \eta \le \eta_0} \eta^{-\beta} \intf_{\R^d} \osc{f, B_\eta(x)} dx \quad \text{and,}\\ &\Va(\R^d) = \{ f \in L^1(\R^d) : \abs{f}_\beta < \infty \}.
  \end{split}\end{equation*}
  The space of quasi-H{\"o}lder functions on $\Omega$ is defined to be
  \begin{equation*}
    \Va(\Omega) = \{ f \in \Va(\R^d) : \supp(f) \subseteq \Omega \},
  \end{equation*}
  and is a Banach space when endowed with the norm $\norm{\cdot}_{\beta} = \lnorm{\cdot} + \abs{\cdot}_\beta$ \cite{saussol2000absolutely,keller1985generalized}.

  \begin{proposition}[{\cite[Section 4]{aimino2015note}}, {\cite[Theorem 5.1]{saussol2000absolutely}}]\label{prop:quasiholder_condition_S}
    The space $\Va(\Omega)$ satisfies condition (S) and the Perron-Frobenius operator $\LL$ associated with the map $T$ is a quasi-compact operator on $L(\Va(\Omega))$ with $\rho(\LL) = 1$.
  \end{proposition}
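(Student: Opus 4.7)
The proof is essentially a matter of assembling standard facts about $\Va(\Omega)$ together with Saussol's main spectral theorem, so the plan is to verify the five sub-conditions of $(S)$ in turn and then quote \cite[Theorem 5.1]{saussol2000absolutely}. Conditions \ref{def:strong_space_condition} labelled (S1), (S3) and (S5) are short direct checks from the definition: (S1) is built into $\norm{\cdot}_\beta = \lnorm{\cdot} + \abs{\cdot}_\beta$; (S3) follows from $\osc{c \bfone_\Omega, B_\eta(x)} = 0$ together with $m(\Omega) = 1$; and (S5) follows from the pointwise inequalities $\bigl|\abs{f(y_1)} - \abs{f(y_2)}\bigr| \le \abs{f(y_1) - f(y_2)}$ and $\abs{\conj{f}(y_1) - \conj{f}(y_2)} = \abs{f(y_1) - f(y_2)}$, which immediately yield $\abs{\abs{f}}_\beta \le \abs{f}_\beta$ and $\abs{\conj{f}}_\beta = \abs{f}_\beta$.

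The two remaining conditions require more work but are both available in the literature. For (S2), I would invoke the compactness of the inclusion $(\Va(\Omega), \norm{\cdot}_\beta) \hookrightarrow (L^1(m), \lnorm{\cdot})$ from \cite[Theorem 1.13 and Proposition 3.4]{saussol2000absolutely}; the underlying argument is a Fréchet--Kolmogorov estimate in which the $\abs{\cdot}_\beta$ seminorm controls the $L^1$ modulus of continuity of translations, while the $\lnorm{\cdot}$ bound supplies equi-integrability. For (S4) the key pointwise estimate is
\begin{equation*}
\osc{fg, B_\eta(x)} \le \Bigl(\esup_{B_\eta(x)} \abs{f}\Bigr)\osc{g, B_\eta(x)} + \Bigl(\esup_{B_\eta(x)} \abs{g}\Bigr)\osc{f, B_\eta(x)},
\end{equation*}
which upon integration in $x$ and taking the supremum over $\eta \in (0,\eta_0]$ would give $\abs{fg}_\beta \le C\norm{f}_\beta\norm{g}_\beta$, provided one can replace the local essential suprema by a global norm bound. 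This is exactly the content of the embedding $\Va(\Omega) \hookrightarrow L^\infty(\Omega)$ with $\norm{f}_\infty \le C\norm{f}_\beta$ from \cite[Proposition 3.4]{saussol2000absolutely}. The main (and only real) obstacle in the whole proposition sits in this last step: $\Va(\Omega)$ functions are only in $L^\infty$ up to a set of measure zero, so some care is needed with null sets when passing between essential suprema over $B_\eta(x)$ and the global $L^\infty$ bound, but this is routine once the $L^\infty$ embedding is in hand.

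With $(S)$ verified, the remaining claims about $\LL$ are imported directly. Saussol's main theorem \cite[Theorem 5.1]{saussol2000absolutely} supplies a Lasota--Yorke inequality of the form $\norm{\LL^n f}_\beta \le C\alpha^n \norm{f}_\beta + C' \lnorm{f}$ for some $\alpha \in (0,1)$, and combining this with the compact embedding established for (S2) via the Ionescu-Tulcea--Marinescu / Hennion theorem yields quasi-compactness of $\LL$ on $\Va(\Omega)$. The identity $\rho(\LL) = 1$ follows because $\LL$ is a Markov operator on $L^1(m)$, forcing $\rho(\LL) \le 1$, while the existence of an ACIM with density in $\Va(\Omega)$ (also from \cite[Theorem 5.1]{saussol2000absolutely}) produces an eigenvector at eigenvalue $1$, giving the reverse bound.
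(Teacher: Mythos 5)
Your overall strategy --- verify the five sub-conditions of (S) one by one, then quote Saussol's Theorem 5.1 for the Lasota--Yorke inequality and the existence of the ACIM, and combine with the compact embedding via Ionescu-Tulcea--Marinescu/Hennion --- is exactly what the cited references do. The paper itself supplies no proof (the citations appear in the statement of the proposition), so your proposal is essentially the content of \cite[Section 4]{aimino2015note} and \cite[Theorem 5.1]{saussol2000absolutely}. Your sketches of (S1), (S2), (S4), (S5), the quasi-compactness, and the $\rho(\LL)=1$ argument are all sound.

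However, your treatment of (S3) contains a genuine error. You assert $\osc{c\bfone_\Omega, B_\eta(x)} = 0$, but this is false: the oscillation is computed over balls $B_\eta(x) \subseteq \R^d$ and the integral defining $\abs{\cdot}_\beta$ runs over all of $\R^d$, so for any $x$ within distance $\eta$ of $\partial\Omega$ the ball $B_\eta(x)$ meets both $\intr\Omega$ and $\Omega^c$ in positive measure (using $\cl{\intr\Omega}=\Omega$), giving $\osc{c\bfone_\Omega, B_\eta(x)} = \abs{c}$ there, not $0$. Consequently
\begin{equation*}
\abs{c\bfone_\Omega}_\beta \;\gtrsim\; \abs{c}\,\sup_{0 < \eta \le \eta_0}\eta^{-\beta}\, m\bigl(B_\eta(\partial\Omega)\bigr),
\end{equation*}
and this quantity is \emph{not} automatically finite --- the paper itself flags this in the remark following Theorem~\ref{thm:stability_quasiholder_stochastic}. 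The correct argument for (S3) must invoke the boundary regularity that Saussol's hypotheses impose on the piecewise structure of $T$: each partition element $U_i$ satisfies a bound of the form $\sup_{\eta}\eta^{-\beta}m(B_\eta(\partial U_i)) < \infty$ (this is precisely what makes $\bfone_{U_i}\in\Va(\Omega)$ for each branch domain), and since $\partial\Omega \subseteq \bigcup_i\partial U_i$ one concludes $\bfone_\Omega = \sum_i\bfone_{U_i} \in \Va(\Omega)$. Without appealing to these structural conditions on the map, (S3) would simply be false for a general compact $\Omega$ with $\cl{\intr\Omega}=\Omega$, so your claim that the check is ``short and direct from the definition'' is where the proposal breaks down.
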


  Hence, if $\LL$ a simple quasi-compact operator then $T$ obeys both a CLT and LDP by Theorem \ref{aiminothm}. Consequently, due to the theory developed in Section \ref{sec:application_to_NG} we obtain the stability of the statistical parameters associated with these laws under perturbations satisfying (KL).

  \begin{theorem}\label{thm:stability_statistical_props_quasiholder}
    If the Perron-Frobenius operator $\LL$ is simple and quasi-compact, and $\{\LL_\epsilon\}_{\epsilon \ge 0}$ satisfies (KL), with $\LL_0 = \LL$, then each of the following holds as $\epsilon \to 0$:
    \begin{enumerate}
      \item Proposition \ref{prop:keller_liverani_summary}: the leading eigenvector $v_\epsilon$ of $\LL_\epsilon$ converges to the unique $T$-invariant density $\frac{\mathrm{d}\mu}{\mathrm{d}m}$ in $\lnorm{\cdot}$.
      \item Theorem \ref{thm:stability_of_variance}: for each real-valued $g \in \Va(\Omega)$ such that $\intf g d \mu = 0$ the approximate variance $\lambda_\epsilon^{(2)}(0)$ converges to the true variance $\sigma^2_g$.
      \item Theorem \ref{thm:convergence_of_rate_functions}: if each $\LL_\epsilon$ is positive then for every real-valued $g \in \Va(\Omega)$ such that $\intf g d \mu = 0$ and compact subset $W$ of the domain of $r_g$ there exists a closed interval $V$ such that $\lim_{\epsilon \to 0} \sup_{z \in V} \,(sz - \ln(\lambda_\epsilon(z))) = r_g(s)$ uniformly for $s \in W$.
    \end{enumerate}
    \begin{proof}
      As noted, the space $\Va(\Omega)$ and Perron-Frobenius $\LL$ satisfy the conditions of Theorem \ref{aiminothm}. We note that $\Va(\Omega) \subseteq L^\infty(m)$ by \cite[Proposition 3.4]{saussol2000absolutely} and so every $g \in\Va(\Omega)$ is also in $\Va(\Omega) \cap L^\infty(m)$. Hence, Corollary \ref{thm:application_to_ng_method} holds, which immediately yields the stability of the variance by Theorem \ref{thm:stability_of_variance}. In the case where each $\LL_\epsilon$ is a positive operator the stability of the rate function follows by Theorem \ref{thm:convergence_of_rate_functions} and Proposition \ref{prop:estimation_of_truncated_rate_func}. Proposition \ref{prop:keller_liverani_summary} is an immediate consequence of (KL) holding for $\{\LL_\epsilon\}_{\epsilon \ge 0}$.
    \end{proof}
  \end{theorem}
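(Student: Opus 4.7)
The plan is to assemble the conclusion by verifying, in order, the hypotheses of each of the three results cited in the theorem statement, and then directly invoking them. Since Proposition \ref{prop:quasiholder_condition_S} already tells us that $\Va(\Omega)$ satisfies condition $(S)$ and that $\LL$ is quasi-compact on $\Va(\Omega)$ with $\rho(\LL)=1$, and we are additionally given that $\LL$ is simple and that $\{\LL_\epsilon\}_{\epsilon\ge 0}$ satisfies (KL) with $\LL_0=\LL$, most of the work has been done upstream; the proposition is essentially a "collect the pieces" statement.

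First I would establish convergence of the invariant density (item 1). Since $\{\LL_\epsilon\}$ satisfies (KL) and $\LL_0=\LL$ is a simple quasi-compact operator, Proposition \ref{prop:keller_liverani_summary} applies directly: for sufficiently small $\epsilon$ each $\LL_\epsilon$ is a simple quasi-compact operator with leading eigenvalue $\lambda_\epsilon$ converging to $1$, eigenprojection $\Pi_\epsilon$ converging to $\Pi_0$ in $\tnorm{\cdot}$, and corresponding leading eigenvector $v_\epsilon$ converging to the $T$-invariant density $\mathrm{d}\mu/\mathrm{d}m$ in $\lnorm{\cdot}$ (after normalisation). This is immediate modulo the standard normalisation argument on $v_\epsilon$.

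Next, for items 2 and 3, I would check that Corollary \ref{thm:application_to_ng_method} applies. The corollary needs $(B,\norm{\cdot})$ and $\LL$ to satisfy the hypotheses of Theorem \ref{aiminothm}, together with (KL) for $\{\LL_\epsilon\}$ and a real-valued observable $g\in B\cap L^\infty(m)$ with $\int g\,d\mu = 0$. The first part is supplied by Proposition \ref{prop:quasiholder_condition_S} and the simplicity of $\LL$. For the observable, one simply notes the embedding $\Va(\Omega)\subseteq L^\infty(m)$ from \cite[Proposition 3.4]{saussol2000absolutely}, so any real-valued $g\in\Va(\Omega)$ automatically lies in $\Va(\Omega)\cap L^\infty(m)$. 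With these pieces in place, Theorem \ref{thm:stability_of_variance} immediately gives $\lim_{\epsilon\to 0}\lambda_\epsilon^{(2)}(0) = \sigma_g^2$, yielding item 2.

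Finally, for item 3 I would invoke Theorem \ref{thm:convergence_of_rate_functions} in combination with Proposition \ref{prop:estimation_of_truncated_rate_func}. Positivity of each $\LL_\epsilon$ is in the hypothesis, so Theorem \ref{thm:convergence_of_rate_functions} furnishes, for every closed $V\subseteq(-\theta,\theta)$, uniform convergence of $\Lambda_{\epsilon,V}^*$ to $\Lambda_{0,V}^*$; Proposition \ref{prop:estimation_of_truncated_rate_func} then produces, for any compact $W$ in the domain of $r_g$, a closed interval $V_W\subseteq(-\theta,\theta)$ on which $\Lambda_{0,V_W}^*$ agrees with $r_g$ on $W$. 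Combining these gives the stated uniform convergence on $W$. There is no real obstacle here: essentially every substantive step has already been handled, and the only subtlety worth flagging in the write-up is the $L^\infty$-embedding of $\Va(\Omega)$, which is what makes the Nagaev-Guivarc'h machinery applicable to arbitrary real-valued quasi-H\"older observables.
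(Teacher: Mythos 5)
Your proposal is correct and follows essentially the same route as the paper's own proof: verify the hypotheses of Theorem \ref{aiminothm} via Proposition \ref{prop:quasiholder_condition_S}, note the $L^\infty$-embedding of $\Va(\Omega)$ from \cite[Proposition 3.4]{saussol2000absolutely} so that Corollary \ref{thm:application_to_ng_method} applies, and then read off the three items from Proposition \ref{prop:keller_liverani_summary}, Theorem \ref{thm:stability_of_variance}, and Theorem \ref{thm:convergence_of_rate_functions} together with Proposition \ref{prop:estimation_of_truncated_rate_func} respectively. The only minor difference is that you explicitly flag the normalisation of $v_\epsilon$ in item 1, which the paper treats as implicit.
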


  We will now gather some important properties of $\Va(\Omega)$ for later use. The first result concerns the continuity of inclusion of $\Va(\Omega)$ into $L^\infty(m)$.

  \begin{proposition}[{\cite[Proposition 3.4]{saussol2000absolutely}}]
    \label{prop:injection_l_infty}
    Let $\nu_d$ denote the measure of the $d$-dimensional unit ball i.e. $\nu_d = m(B_1(0))$. For every $f \in \Va(\Omega)$ we have
    \begin{equation*}
      \norm{f}_{L^\infty} \le \frac{1}{\nu_d \eta_0^d}(\eta_0^\beta\abs{f}_\beta + \lnorm{f}) \le \frac{\max\{1, \eta_0^\beta\}}{\nu_d\eta_0^d} \norm{f}_\beta.
    \end{equation*}
  \end{proposition}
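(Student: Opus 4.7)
The plan is to bound $|f(y)|$ pointwise (a.e.\ $y$) by quantities that are either a fixed $L^1$-average of $|f|$ over a ball of radius $\eta_0$ or a fixed $L^1$-average of the oscillation function, then take the essential supremum in $y$ on the left and estimate the two $L^1$-averages by $\lnorm{f}$ and $\eta_0^\beta\abs{f}_\beta$ respectively. The key trick is to apply the triangle inequality with $y$ on the left (rather than $x$), and to integrate in the \emph{other} variable over the ball.

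First, unpacking the definition of the essential supremum that defines $\osc{f, B_\eta(x)}$, for a.e.\ pair $(x,y) \in B_\eta(x) \times B_\eta(x)$ with respect to $m \times m$ one has
\begin{equation*}
  |f(y)| \le |f(x)| + \osc{f, B_\eta(x)}.
\end{equation*}
By Fubini, this holds in particular for a.e.\ $y \in \R^d$ and a.e.\ $x \in B_\eta(y)$. Integrating in $x$ over $B_\eta(y)$ and dividing by $m(B_\eta(y)) = \nu_d \eta^d$,
\begin{equation*}
  |f(y)| \le \frac{1}{\nu_d \eta^d} \intf_{B_\eta(y)} |f(x)| \, dx + \frac{1}{\nu_d \eta^d} \intf_{B_\eta(y)} \osc{f, B_\eta(x)} \, dx.
\end{equation*}

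The first term on the right is at most $\lnorm{f}/(\nu_d \eta^d)$, and the second is at most $\eta^{-d}\nu_d^{-1}\int_{\R^d}\osc{f, B_\eta(x)}\,dx \le \eta^\beta \abs{f}_\beta /(\nu_d \eta^d)$ by the very definition of $\abs{\cdot}_\beta$ (valid since $\eta \le \eta_0$). Setting $\eta = \eta_0$ and taking the essential supremum in $y$ yields the first inequality. The second inequality follows by pulling $\max\{1, \eta_0^\beta\}$ out of the bracket and recalling that $\lnorm{f} + \abs{f}_\beta = \norm{f}_\beta$.

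The only delicate point is the justification of the pointwise (a.e.) inequality $|f(y)| \le |f(x)| + \osc{f, B_\eta(x)}$ for a.e.\ $y$ and a.e.\ $x \in B_\eta(y)$; this is really the crux and requires unpacking the essential supremum defining $\osc{f, B_\eta(x)}$ together with a Fubini-type argument, since $\osc{f, B_\eta(x)}$ is defined as an $m \times m$-essential supremum over $B_\eta(x) \times B_\eta(x)$ rather than a pointwise quantity. Once this is in hand, the rest of the argument is a direct integration.
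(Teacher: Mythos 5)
The paper itself gives no proof here; the result is quoted directly from \cite[Proposition~3.4]{saussol2000absolutely}, so there is no in-paper argument to compare against. Your overall strategy --- derive an a.e.\ pointwise bound on $|f(y)|$ by averaging over $B_{\eta_0}(y)$, then estimate the two resulting integrals by $\lnorm{f}$ and $\eta_0^\beta\abs{f}_\beta$ --- is the right one.

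There is, however, a genuine gap at precisely the step you single out as ``the crux'', and it is not bridged by Fubini as you assert. The definition of $\osc{f,B_\eta(x)}$ gives, for each fixed $x$, that $|f(y_1)-f(y_2)|\le\osc{f,B_\eta(x)}$ for $m\times m$-a.e.\ pair $(y_1,y_2)\in B_\eta(x)\times B_\eta(x)$. The inequality you want, $|f(y)|\le|f(x)|+\osc{f,B_\eta(x)}$ for a.e.\ $y$ and a.e.\ $x\in B_\eta(y)$, amounts to evaluating the a.e.\ bound on the diagonal slice $y_2=x$, where $x$ is simultaneously the centre of the ball and one member of the pair; a slice of a null set along $\{y_2=x\}$ is itself a lower-dimensional set and is not controlled by Fubini. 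The pointwise inequality can in fact be rescued, but it needs the Lebesgue differentiation theorem: a.e.\ $x$ is a Lebesgue point of $|f|$, whence $|f(x)|\ge\einf_{B_\eta(x)}|f|\ge\esup_{B_\eta(x)}|f|-\osc{f,B_\eta(x)}$. A cleaner route avoids the diagonal entirely: for a.e.\ $x$ and a.e.\ $y_1\in B_\eta(x)$ one has $|f(y_1)|\le|f(y_2)|+\osc{f,B_\eta(x)}$ for a.e.\ $y_2\in B_\eta(x)$, and averaging in $y_2$ yields $|f(y_1)|\le\frac{1}{\nu_d\eta^d}\int_{B_\eta(x)}|f|\,dm+\osc{f,B_\eta(x)}\le\frac{\lnorm{f}}{\nu_d\eta^d}+\osc{f,B_\eta(x)}$, a statement to which Fubini does legitimately apply. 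From there, integrating in $x$ over $B_\eta(y_1)$ and taking $\eta=\eta_0$ finishes the proof exactly as in the rest of your write-up.
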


  In later sections we will prove that \ref{en:kl_ly_bound} holds for certain perturbations of the Perron-Frobenius operator by making use of the analogous inequality for the unperturbed operator.

  \begin{proposition}[{\cite[Lemma 4.1]{saussol2000absolutely}}]\label{prop:ly_unperturbed_quasiholder}
    Provided that $\eta_0$ is small enough, there exists $\gamma < 1$ and $D < \infty$ such that for each $f \in \Va(\Omega)$ we have
    \begin{equation*}
      \abs{\LL f}_\beta \le \gamma \abs{f}_\beta + D \lnorm{f}.
    \end{equation*}
  \end{proposition}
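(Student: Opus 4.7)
The plan is to reproduce Saussol's original argument from \cite{saussol2000absolutely}: write $\LL f$ locally as a sum over the inverse branches of $T$ on the finitely many partition pieces $\{U_i\}$ of the piecewise-expanding structure, and then carefully bound $\osc(\LL f, B_\eta(x))$ by separating it into an \emph{interior} part (where $B_\eta(x)$ is disjoint from the images of the partition boundaries under $T$, so that each inverse branch $T_i^{-1}$ is defined and smooth on all of $B_\eta(x)$) and a \emph{boundary} part (where $B_\eta(x)$ meets $T(\partial U_i)$ for some $i$). After integrating in $x$, the interior part will produce the contracted term $\gamma \abs{f}_\beta$, and the boundary part, via the geometric regularity condition (PE) on $T$, will produce the $D\lnorm{f}$ term.

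For the interior piece I would use the pointwise formula $\LL f(x) = \sum_i (f \cdot J_i^{-1}) \circ T_i^{-1}(x)\,\mathbf{1}_{T(U_i)}(x)$, where $J_i = \abs{\det DT|_{U_i}}$. On a ball $B_\eta(x)$ disjoint from any $T(\partial U_i)$, the triangle inequality gives
\begin{equation*}
\osc(\LL f, B_\eta(x)) \le \sum_i \Bigl[ J_i^{-1}\bigl(T_i^{-1}(x)\bigr)\osc\bigl(f, T_i^{-1}(B_\eta(x))\bigr) + \funcosc\bigl(J_i^{-1}, T_i^{-1}(B_\eta(x))\bigr)\norm{f}_\infty\bigr].
\end{equation*}
Using expansion, $T_i^{-1}(B_\eta(x)) \subseteq B_{s^{-1}\eta}(T_i^{-1}(x))$ for some $s>1$, and the $C^1$-regularity of $J_i^{-1}$ gives $\funcosc(J_i^{-1}, T_i^{-1}(B_\eta(x))) \le \mathrm{Lip}(J_i^{-1})\, s^{-1}\eta$. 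After multiplying by $\eta^{-\beta}$ and integrating in $x$, the change of variables $y = T_i^{-1}(x)$ absorbs a factor $J_i$ and yields $\gamma_0 \abs{f}_\beta$ with $\gamma_0 < 1$ coming from the expansion combined with the $\beta$-Hölder scaling of $\eta$; the term $\funcosc(J_i^{-1}, \cdot)\norm{f}_\infty$ integrates to a multiple of $\norm{f}_\infty$, which by Proposition \ref{prop:injection_l_infty} is in turn bounded by $\eta_0^\beta \abs{f}_\beta + \lnorm{f}$ (up to a constant), contributing an arbitrarily small extra multiple of $\abs{f}_\beta$ once $\eta_0$ is small.

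The main obstacle, as usual in this setup, is controlling the boundary contribution. When $B_\eta(x)$ crosses some $T(\partial U_i)$, the above pointwise decomposition breaks down; one must estimate
\begin{equation*}
\intf_{\R^d} \mathbf{1}_{\{B_\eta(x) \cap T(\partial U_i) \ne \emptyset\}}\, \osc(\LL f, B_\eta(x))\, dx
\end{equation*}
and bound it by a quantity of order $\eta^\beta \lnorm{f}$. This is precisely where Saussol's condition (PE) enters: it uniformly controls the $(d-1)$-dimensional measure of the boundaries $\partial U_i$ together with how their $\eta$-neighbourhoods contract under inverse branches. The key estimate is that the set of $x$ with $B_\eta(x)\cap T(\partial U_i)\ne\emptyset$ has measure $O(\eta)$, while the oscillation of $\LL f$ on such a ball is bounded pointwise by $2\norm{f}_\infty\sum_i J_i^{-1}\circ T_i^{-1}$ which integrates against $\mathbf{1}_{T(U_i)}$ to $\lnorm{f}$.

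Putting the two estimates together yields $\eta^{-\beta}\intf \osc(\LL f,B_\eta(x))\,dx \le \gamma_0 \abs{f}_\beta + C\eta^{1-\beta}(\norm{f}_\infty + \lnorm{f})$ for $0 < \eta \le \eta_0$, and taking the supremum in $\eta$ after absorbing the small $\norm{f}_\infty$ contribution via Proposition \ref{prop:injection_l_infty} gives $\abs{\LL f}_\beta \le \gamma \abs{f}_\beta + D \lnorm{f}$ with $\gamma < 1$ provided $\eta_0$ is taken small enough that $\gamma_0 + C\eta_0^{1-\beta}\cdot\eta_0^\beta/(\nu_d\eta_0^d) < 1$. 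The calibration of the constants $\gamma_0$, $C$, and the smallness of $\eta_0$ is the only delicate arithmetic, and it is already carried out in detail in \cite[Lemma 4.1]{saussol2000absolutely}; our proposal would be to import that calculation verbatim.
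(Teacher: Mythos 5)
Your proposal takes the same route as the paper: the paper gives no independent proof of this proposition, it simply cites Saussol's Lemma 4.1 and then appends a short remark. So in substance your plan (recapitulate Saussol's interior/boundary decomposition and then ``import that calculation verbatim'') is what the paper does. However, you miss the one substantive point the paper adds. The space $\Va(\Omega)$ here is a space of \emph{complex-valued} functions (it must be, since the twisted operators $\LL(z)$ act on a Banach space over $\C$), whereas Saussol's proof of Lemma 4.1 is written for real-valued $f$ only. The paper's remark immediately after the proposition flags exactly this: Saussol's calculation cannot be imported verbatim, because the essential infimum appearing in \cite[Proposition 3.2~(iii)]{saussol2000absolutely} must be replaced by an essential supremum, which in turn changes the bound on the term $R_i^{(1)}(x)$ to $\abs{f(y_i)} + \osc{f, B_{s\epsilon}(y_i)}$. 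The modification is minor, but it is a genuine change to Saussol's argument that your ``verbatim'' step silently skips.

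A secondary issue: the heuristic arithmetic at the end of your sketch does not actually close. You arrive at
\begin{equation*}
  \eta^{-\beta}\intf_{\R^d}\osc{\LL f,B_\eta(x)}\,dx \le \gamma_0\abs{f}_\beta + C\eta^{1-\beta}\bigl(\norm{f}_{L^\infty}+\lnorm{f}\bigr),
\end{equation*}
take the supremum over $\eta\in(0,\eta_0]$, and then apply Proposition~\ref{prop:injection_l_infty}. But that yields an extra coefficient of $\abs{f}_\beta$ equal to $C\eta_0^{1-\beta}\cdot\eta_0^\beta/(\nu_d\eta_0^d)=C/(\nu_d\eta_0^{d-1})$, which \emph{diverges} as $\eta_0\to 0$ when $d\ge2$. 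So ``taking $\eta_0$ small'' does not shrink this term; the boundary contribution in Saussol's actual estimate is organised so that its coefficient tends to zero with $\eta_0$, which your sketch does not reproduce. Since you explicitly defer the calibration to Saussol this does not invalidate the overall plan, but the displayed scaling is misleading and should be removed or corrected.
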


  \begin{remark}
    We assume that $\eta_0$ is sufficiently small so that the conclusion of Proposition \ref{prop:ly_unperturbed_quasiholder} holds.
  \end{remark}

  \begin{remark}
    In \cite{saussol2000absolutely} the space $\Va(\Omega)$ consists of real-valued functions only and so the proof of \cite[Lemma 4.1]{saussol2000absolutely} only applies to real-valued $f \in \Va(\Omega)$. Examining the proof of \cite[Lemma 4.1]{saussol2000absolutely}, we note that the same conclusion holds for complex-valued $f$ after minor modifications to the arguments. In particular, the essential infimum in \cite[Proposition 3.2 (iii)]{saussol2000absolutely} must be replaced by an essential supremum and consequently the resulting essential supremum term that appears when bounding $R_i^{(1)}(x)$ must be bounded by $\abs{f(y_i)} + \osc{f, B_{s\epsilon}(y_i)}$. The rest of the argument holds \emph{mutatis mutandis}.
  \end{remark}

  \subsection{Ulam perturbations}

  We will now prove that Ulam approximations of the Perron-Frobenius operator satisfy (KL) on $\Va(\Omega)$. As a consequence, the theory developed in Section \ref{sec:application_to_NG} yields the stability of the rate function and variance with respect to numerical approximation by Ulam's method. While \ref{en:kl_l1_bound} is standard, \ref{en:kl_conv} and \ref{en:kl_ly_bound} require significantly more work. In particular, the proof of \ref{en:kl_ly_bound} is quite long and depends critically on the geometry of the partitions inducing the Ulam approximations, so we defer its proof to Appendix \ref{sec:ulam_quasiholder_boundedness}. The class of partitions we consider is the following.

  \begin{definition}\label{def:regular_admissible_partitions}
    For $\kappa \ge 1$ let $\mathcal{P}(\kappa)$ be the collection of finite measurable partitions $Q$ of $\Omega$ satisfying the following conditions:
      \begin{enumerate}
        \item The elements of $Q$ are compact and convex polytopes with non-empty interiors.
        \item For every $I \in Q$ we have $\diam(Q) \le \kappa\diam(B_I)$, where $B_I$ is a ball of maximal volume inscribed in $I$ and $\diam(Q) = \max_{J \in Q} \diam(J)$.
      \end{enumerate}
  \end{definition}

  Each $Q \in \mathcal{P}(\kappa)$ induces a conditional expectation operator $\E_Q$ that is given by
  \begin{equation*}
    \E_Q f = \sum_{I \in Q} \avg{f}{I} \bfone_I,
  \end{equation*}
  where $\hat{f}_K$ denotes the expected value of $f$ on some measurable set $K$ i.e.
  \begin{equation*}
    \avg{f}{K} = \frac{1}{m(K)} \intf_K f dm.
  \end{equation*}
  We adopt the convention that $\avg{f}{K} = 0$ if $m(K) = \infty$. As in Section \ref{sec:1dulampert}, $\LL_{Q} = \E_Q \LL$ is the Ulam approximation of $\LL$ induced by $Q$. If $\{Q_\epsilon\}_{\epsilon  > 0} \subseteq \mathcal{P}(\kappa)$ is a sequence of partitions then we define the corresponding sequence of perturbed Perron-Frobenius operators by $\LL_\epsilon = \E_{Q_\epsilon}\LL$, with $\LL_0 = \LL$ as usual.

  \begin{theorem}[Stability of statistical parameters under Ulam approximations]\label{thm:stability_quasiholder_ulam}
    Suppose that $\LL$ is a simple quasi-compact operator. Let $\{Q_\epsilon\}_{\epsilon  > 0} \subseteq \mathcal{P}(\kappa)$ be such that $\lim_{\epsilon \to 0} \diam(Q_\epsilon) = 0$, and let $\{\LL_\epsilon\}_{\epsilon \ge 0}$ be the corresponding Ulam approximations of the Perron-Frobenius operator. If
    \begin{equation*}
      2d \left(1 + \frac{\kappa}{\sqrt[d]{\frac{3}{2}} - 1} \right)^\beta < \frac{1}{\gamma},
    \end{equation*}
    where the constant $\gamma$ is from Proposition \ref{prop:ly_unperturbed_quasiholder},
    then there exists $\epsilon_0 > 0$ such that $\{\LL_\epsilon\}_{0 \le \epsilon \le \epsilon_0}$ satisfies (KL). Consequently, the conclusion of Theorem \ref{thm:stability_statistical_props_quasiholder} holds.
  \end{theorem}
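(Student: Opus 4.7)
The plan is to verify the three Keller-Liverani conditions \ref{en:kl_conv}, \ref{en:kl_l1_bound}, \ref{en:kl_ly_bound} for the family $\{\LL_\epsilon\}_{0 \le \epsilon \le \epsilon_0}$ with $\epsilon_0$ taken sufficiently small, and then invoke Theorem \ref{thm:stability_statistical_props_quasiholder}. Throughout, the weak norm is $\lnorm{\cdot}$ and the strong norm is $\norm{\cdot}_\beta = \lnorm{\cdot} + \abs{\cdot}_\beta$.

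Condition \ref{en:kl_l1_bound} is essentially free: each conditional expectation $\E_{Q_\epsilon}$ and the Perron-Frobenius operator $\LL$ are $L^1$-contractions, so $\lnorm{\LL_\epsilon^n f} \le \lnorm{f}$ uniformly in $\epsilon$ and $n$, which gives \ref{en:kl_l1_bound} with $C_1 = K_1 = 1$. For \ref{en:kl_conv} I would first establish the pointwise estimate
$$\lnorm{(\E_Q - \Id)g} \le \diam(Q)^\beta \abs{g}_\beta \quad \text{whenever } \diam(Q) \le \eta_0,$$
by noting that for $x \in I \in Q$ one has $|g(x) - \avg{g}{I}| \le \osc{g, I} \le \osc{g, B_{\diam(Q)}(x)}$, and then integrating against the definition of $\abs{\cdot}_\beta$. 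Applying this with $g = \LL f$ and invoking Proposition \ref{prop:ly_unperturbed_quasiholder} yields
$$\tnorm{\LL_\epsilon - \LL} \le (\gamma + D)\diam(Q_\epsilon)^\beta,$$
so \ref{en:kl_conv} holds with $\tau(\epsilon) = (\gamma + D)\diam(Q_\epsilon)^\beta$, which vanishes as $\epsilon \to 0$.

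The main content, and the main obstacle, is \ref{en:kl_ly_bound}, which reduces to establishing a uniform Lasota-Yorke inequality for the conditional expectation on the quasi-Hölder seminorm. Concretely, I would prove that there exist constants $A(Q), B(Q)$, depending only on $\kappa$, $\beta$, $d$ and $\diam(Q)$, such that
$$\abs{\E_Q h}_\beta \le A(Q)\abs{h}_\beta + B(Q)\lnorm{h} \quad \text{for all } h \in \Va(\Omega),\ Q \in \mathcal{P}(\kappa),$$
with $\limsup_{\diam(Q) \to 0} A(Q) \le 2d\bigl(1 + \kappa/(\sqrt[d]{3/2}-1)\bigr)^\beta$. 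Composing this with Proposition \ref{prop:ly_unperturbed_quasiholder} then gives
$$\abs{\LL_\epsilon f}_\beta \le A(Q_\epsilon)\gamma\, \abs{f}_\beta + \bigl(A(Q_\epsilon)D + B(Q_\epsilon)\bigr)\lnorm{f},$$
and the quantitative hypothesis of the theorem is precisely what guarantees $A(Q_\epsilon)\gamma < 1$ for all sufficiently small $\epsilon$, after which a standard iteration upgrades this one-step bound to the uniform estimate \ref{en:kl_ly_bound}.

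The hard part is therefore the geometric inequality on $\abs{\E_Q h}_\beta$, whose proof is deferred to Appendix \ref{sec:ulam_quasiholder_boundedness}. The core difficulty is controlling $\osc{\E_Q h, B_\eta(x)}$ uniformly in $x \in \Omega$ and $\eta \in (0, \eta_0]$: when $B_\eta(x)$ lies inside a single partition element the oscillation vanishes, whereas when $B_\eta(x)$ meets several elements $I_1, \ldots, I_k \in Q$, the oscillation reduces to differences of averages $|\avg{h}{I_i} - \avg{h}{I_j}|$, which in turn are controlled by $\osc{h, I_i \cup I_j}$. The regularity condition defining $\mathcal{P}(\kappa)$ — a uniform bound on the ratio between the diameter of each element and its largest inscribed ball — ensures that adjacent elements have comparable size and bounded combinatorial complexity, and it is this balance that ultimately yields the explicit constant $2d\bigl(1 + \kappa/(\sqrt[d]{3/2}-1)\bigr)^\beta$ after integration against $\eta$ in the definition of $\abs{\cdot}_\beta$. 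With all three (KL) conditions in force on $[0, \epsilon_0]$, Theorem \ref{thm:stability_statistical_props_quasiholder} delivers stability of the invariant density, variance and rate function.
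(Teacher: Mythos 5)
Your proposal follows the paper's proof strategy closely: verify (KL1), (KL2), (KL3) for the Ulam family and invoke Theorem \ref{thm:stability_statistical_props_quasiholder}. Each of your three verifications lines up with one of the paper's propositions (Propositions \ref{prop:kl_conv_quasiholder_ulam}, \ref{prop:l1_bound_quashiolder_ulam}, \ref{prop:kl_ly_quasiholder_ulam}), and your reduction of (KL3) to a uniform bound on the operator seminorm $\abs{\E_{Q_\epsilon}}_\beta$ with asymptotic constant $2d\bigl(1 + \kappa/(\sqrt[d]{3/2}-1)\bigr)^\beta$ is exactly the role played by Lemma \ref{cor:conditional_exp_quasi_holder_uniform_bound} (in the paper that bound has $B(Q) = 0$; your extra $B(Q)\lnorm{h}$ term is harmless). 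Two small points of comparison. First, your (KL1) argument is in fact slightly cleaner than the paper's: you apply $\abs{g(x) - \avg{g}{I}} \le \osc{g, B_{\diam(Q)}(x)}$ directly to complex-valued $g$, whereas the paper splits into real and imaginary parts and thus picks up a factor of $2$ in Proposition \ref{prop:kl_conv_quasiholder_ulam}; the split is not actually needed because the oscillation bound on averages holds verbatim for complex $g$. Second, you omit the one-line observation that each $\LL_\epsilon = \E_{Q_\epsilon}\LL$ is a positive operator, which the paper notes explicitly since positivity is a hypothesis of the rate-function clause of Theorem \ref{thm:stability_statistical_props_quasiholder}; without recording this you cannot claim stability of the rate function. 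Finally, your heuristic for the deferred geometric inequality (controlling $\osc{\E_Q h, B_\eta(\cdot)}$ via differences of cell averages and the $\mathcal{P}(\kappa)$ regularity) is consistent with the appendix, though the actual proof there is more delicate: it separates a ``large $\eta$'' regime, where $\osc{\E_Q h, B_\eta(x)} \le \osc{h, B_{\eta+\diam(Q)}(x)}$ suffices, from a ``small $\eta$'' regime, where one exploits that the support of $\osc{\E_Q h, B_\eta(\cdot)}$ shrinks into the $\eta$-neighbourhoods of cell boundaries and uses a Steiner-type convex-geometry estimate to control $m(B_\eta(\partial I))/m(I)$. You are right to flag this as where the real work lies.
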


  \begin{example}
    Let $\Omega = [0,1]^d$. For each $n \in \Z^+$ the set
    \begin{equation*}
      Q_n = \left\{ \frac{\Omega}{n} + b : b \in \frac{\Z^{d}}{n} \right\}
    \end{equation*}
    is a measurable partition of $\Omega$ consisting of cubes congruent to $[0,1/n]^d$.
    It is straightforward to show that $Q_n \in \mathcal{P}\left(\sqrt{d}\right)$ for every $n \in \Z^+$.
    Thus, Theorem \ref{thm:stability_quasiholder_ulam} applies to any piecewise expanding map (in the sense of \cite{saussol2000absolutely}) $T: \Omega \to \Omega$ such that
    \begin{equation*}
      2d \left(1 + \frac{\sqrt{d}}{\sqrt[d]{\frac{3}{2}} - 1} \right)^\beta < \frac{1}{\gamma},
    \end{equation*}
    where $\gamma$ is from Proposition \ref{prop:ly_unperturbed_quasiholder} and $\beta$ is equal to the constant $\alpha$ appearing in \cite[Section 2, (PE2)]{saussol2000absolutely}.
  \end{example}

  \begin{proposition}[\ref{en:kl_conv} for Ulam approximations]\label{prop:kl_conv_quasiholder_ulam}
    If $\{Q_\epsilon\}_{\epsilon  > 0} \subseteq \mathcal{P}(\kappa)$ is a sequence of partitions with $\lim_{\epsilon \to 0} \diam(Q_\epsilon) = 0$, then there exists $\epsilon_1 > 0$ such that for every $\epsilon \in [0, \epsilon_1]$ we have
    \begin{equation*}
      \tnorm{\LL_\epsilon - \LL_0} \le 2\diam(Q_\epsilon)^\beta \norm{\LL}_\beta.
    \end{equation*}
    In particular $\{\LL_\epsilon\}_{0 \le \epsilon \le \epsilon_1}$ satisfies \ref{en:kl_conv}.
    \begin{proof}
      The statement is clearly true for $\epsilon = 0$, so we may assume that $\epsilon > 0$. One has
      \begin{equation}\label{eq:kl_conv_quasiholder_ulam_1}
        \tnorm{\E_{Q_\epsilon} - \Id} = \sup_{\norm{f}_\alpha = 1} \sum_{I \in Q_\epsilon} \intf_I \abs{\avg{f}{I} - f} dm.
      \end{equation}
      Fix $I \in Q_\epsilon$. Let $f_r$ and $f_i$ be the real and imaginary parts of $f \in \Va(\Omega)$, respectively. By linearity of integration and the triangle inequality we have
      \begin{equation*}
        \intf_I \abs{\avg{f}{I} - f} dm \le \intf_I \abs{\avg{(f_r)}{I} - f_r} dm + \intf_I \abs{\avg{(f_i)}{I} - f_i} dm.
      \end{equation*}
      Applying this to \eqref{eq:kl_conv_quasiholder_ulam_1} yields
      \begin{equation}\label{eq:kl_conv_quasiholder_ulam_2}
        \tnorm{\E_{Q_\epsilon} - \Id} \le 2 \sup \left\{ \sum_{I \in Q_\epsilon} \intf_I \abs{\avg{f}{I} - f} dm : \substack{f \text{ is real valued}\\ \text{and }\norm{f}_\alpha = 1}\right\}.
      \end{equation}
      Let $f \in \Va(\Omega)$ be real valued and fix $x \in I$. Then for almost every $y_1, y_2 \in I$ we have
      \begin{equation*}
        \abs{f(y_1) - f(y_2)} \le \osc{f, B_{\diam(Q_\epsilon)}(x)}.
      \end{equation*}
      Taking the expectation with respect to $y_1$ over $I$ we find that
      \begin{equation*}
        \abs{\avg{f}{I} - f(y_2)} \le \frac{1}{m(I)}\intf_I \abs{f(y_1) - f(y_2)} dy_1 \le \osc{f, B_{\diam(Q_\epsilon)}(x)},
      \end{equation*}
      for almost every $y_2 \in I$. Taking the expectation with respect to both $y_2$ and $x$ over $I$ we then obtain
      \begin{equation*}
        \intf_I \abs{\avg{f}{I} - f} dm \le \intf_I \osc{f, B_{\diam(Q_\epsilon)}(x)} dx.
      \end{equation*}
      As $\lim_{\epsilon \to 0} \diam(Q_\epsilon) = 0$ there exists $\epsilon_1>0$ such that $\diam(Q_\epsilon) \le \eta_0$ for all $\epsilon \in (0, \epsilon_1]$. Recalling the definition of $\norm{\cdot}_\beta$, for each $\epsilon \in (0, \epsilon_1]$ we have
      \begin{equation*}
        \sum_{I \in Q_\epsilon} \intf_I \abs{\avg{f}{I} - f} dm \le \intf_{\R^d} \osc{f, B_{\diam(Q_\epsilon)}(x)} dx \le \diam(Q_\epsilon)^\beta \norm{f}_\beta,
      \end{equation*}
      which, when applied to \eqref{eq:kl_conv_quasiholder_ulam_2}, then yields
      \begin{equation*}
        \tnorm{\E_{Q_\epsilon} - \Id} \le 2\diam(Q_\epsilon)^\beta.
      \end{equation*}
      We conclude the proof by noting that $\tnorm{\LL_\epsilon - \LL} \le \tnorm{\E_{Q_\epsilon} - \Id} \norm{\LL}_\beta$.
    \end{proof}
  \end{proposition}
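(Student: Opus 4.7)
The plan is to reduce the estimate on $\tnorm{\LL_\epsilon - \LL_0}$ to an estimate on $\tnorm{\E_{Q_\epsilon} - \Id}$, and then control the latter via the defining property of the quasi-Hölder seminorm. First I would observe that $\LL_\epsilon - \LL_0 = (\E_{Q_\epsilon} - \Id)\LL$, so since $\LL : (\Va(\Omega), \norm{\cdot}_\beta) \to (\Va(\Omega), \norm{\cdot}_\beta)$ is bounded with operator norm $\norm{\LL}_\beta$, we get $\tnorm{\LL_\epsilon - \LL_0} \le \tnorm{\E_{Q_\epsilon} - \Id} \cdot \norm{\LL}_\beta$. It therefore suffices to show $\tnorm{\E_{Q_\epsilon} - \Id} \le 2 \diam(Q_\epsilon)^\beta$ for $\epsilon$ small.

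Since $\tnorm{\cdot}$ measures the $L^1$-norm of the output on unit-norm inputs and a partition decomposition of $\Omega$ is available, the key quantity is $\sum_{I \in Q_\epsilon} \intf_I |\avg{f}{I} - f|\, dm$, taken over $f \in \Va(\Omega)$ with $\norm{f}_\beta = 1$. I would first reduce to real-valued $f$ by splitting into real and imaginary parts (picking up the factor of $2$, since each part has $\norm{\cdot}_\beta \le 1$). Then, for each $I \in Q_\epsilon$ and any fixed $x \in I$, the inclusion $I \subseteq B_{\diam(Q_\epsilon)}(x)$ gives, for a.e.\ $y_1,y_2 \in I$, the pointwise bound $|f(y_1) - f(y_2)| \le \osc{f, B_{\diam(Q_\epsilon)}(x)}$. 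Averaging first over $y_1 \in I$ yields $|\avg{f}{I} - f(y_2)| \le \osc{f, B_{\diam(Q_\epsilon)}(x)}$ for a.e.\ $y_2$, and then averaging over $y_2 \in I$ and over $x \in I$ gives $\intf_I |\avg{f}{I} - f|\, dm \le \intf_I \osc{f, B_{\diam(Q_\epsilon)}(x)}\, dx$.

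Summing over the partition telescopes into a single integral over $\Omega$, namely $\intf_\Omega \osc{f, B_{\diam(Q_\epsilon)}(x)}\, dx$. Choosing $\epsilon_1 > 0$ so that $\diam(Q_\epsilon) \le \eta_0$ whenever $\epsilon \in (0, \epsilon_1]$ (possible by the hypothesis $\diam(Q_\epsilon) \to 0$), the definition of $\abs{f}_\beta$ bounds this integral by $\diam(Q_\epsilon)^\beta \abs{f}_\beta \le \diam(Q_\epsilon)^\beta \norm{f}_\beta$, which completes the estimate.

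The proof is essentially a bookkeeping exercise once one has the right telescoping observation, so the only mildly delicate step is the interchange between the pointwise oscillation (defined as an essential supremum over pairs) and the pointwise difference $|f(y_1) - f(y_2)|$, which must be handled at almost every $(y_1, y_2)$ jointly; this is what forces the argument to be carried out with essential suprema throughout, but is a standard feature of quasi-Hölder estimates. The factor of $2$ from the real/imaginary split is the reason the bound $2\diam(Q_\epsilon)^\beta \norm{\LL}_\beta$ (rather than $\diam(Q_\epsilon)^\beta \norm{\LL}_\beta$) appears in the conclusion.
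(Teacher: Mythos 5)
Your argument is correct and matches the paper's proof step for step: reduce to bounding $\tnorm{\E_{Q_\epsilon} - \Id}$, split into real and imaginary parts for the factor of $2$, use the inclusion $I \subseteq B_{\diam(Q_\epsilon)}(x)$ and average over $y_1$, $y_2$, $x$ to bound $\intf_I |\avg{f}{I}-f|\,dm$ by $\intf_I \osc{f, B_{\diam(Q_\epsilon)}(x)}\,dx$, sum over the partition, and invoke the definition of $\abs{\cdot}_\beta$ once $\diam(Q_\epsilon) \le \eta_0$. The only cosmetic difference is that you state the reduction $\tnorm{\LL_\epsilon - \LL_0} \le \tnorm{\E_{Q_\epsilon} - \Id}\,\norm{\LL}_\beta$ up front while the paper records it at the end.
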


  \begin{proposition}[\ref{en:kl_l1_bound} for Ulam approximations]\label{prop:l1_bound_quashiolder_ulam}
    If $\{Q_\epsilon\}_{\epsilon  > 0} \subseteq \mathcal{P}(\kappa)$ is a sequence of partitions, then for each $\epsilon \ge 0$ and $n \in Z^+$ we have
    \begin{equation*}
      \lnorm{\LL_\epsilon^n} \le 1.
    \end{equation*}
    In particular, $\{\LL_\epsilon\}_{\epsilon \ge 0}$ satisfies \ref{en:kl_l1_bound}.
    \begin{proof}
      Note that $\lnorm{\LL} = 1$ and that, for each $\epsilon > 0$, we have $\lnorm{\E_{Q_\epsilon}} =  1$. The conclusion follows immediately.
    \end{proof}
  \end{proposition}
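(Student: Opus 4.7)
The plan is a brief submultiplicativity argument: verify that both $\LL$ and each $\E_{Q_\epsilon}$ are $L^1(m)$-contractions individually, then conclude by submultiplicativity of the operator norm applied to the composition $\LL_\epsilon = \E_{Q_\epsilon}\LL$. Since both factors are standard examples of Markov operators on $L^1(m)$, there is effectively no substantive obstacle, and the constants $C_1 = K_1 = 1$ should drop out directly.

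First I would observe that the Perron-Frobenius operator $\LL$ is Markov on $L^1(m)$: it is positive by construction, and its defining duality with composition by $T$ (namely $\int (\LL f)\cdot g \, dm = \int f \cdot (g\circ T)\, dm$), specialised to $g \equiv 1$, shows that it preserves integrals of nonnegative functions. Splitting $f \in L^1(m)$ into $f = f^+ - f^-$ and using positivity gives $\lnorm{\LL f} \le \lnorm{\LL f^+} + \lnorm{\LL f^-} = \lnorm{f^+} + \lnorm{f^-} = \lnorm{f}$, so $\lnorm{\LL} \le 1$. The same argument applies to each $\E_{Q_\epsilon}$: from the explicit formula $\E_{Q_\epsilon} f = \sum_{I \in Q_\epsilon} \avg{f}{I}\bfone_I$ it is manifestly positive, and integration against $m$ on any atom $I$ gives $\int_I \avg{f}{I}\bfone_I \,dm = \int_I f \,dm$; summing over $I \in Q_\epsilon$ and decomposing into positive and negative parts yields $\lnorm{\E_{Q_\epsilon}} \le 1$.

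Combining these contractions via submultiplicativity, for every $\epsilon > 0$ and $n \in \Z^+$ one has $\lnorm{\LL_\epsilon^n} \le \lnorm{\LL_\epsilon}^n \le (\lnorm{\E_{Q_\epsilon}}\lnorm{\LL})^n \le 1$, while the case $\epsilon = 0$ reduces to $\lnorm{\LL^n} \le 1$ by the same reasoning. Hence \ref{en:kl_l1_bound} holds uniformly in $\epsilon$ with the choice $C_1 = K_1 = 1$.
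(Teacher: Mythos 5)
Your argument is correct and follows exactly the paper's route: establish that $\LL$ and each $\E_{Q_\epsilon}$ are $L^1(m)$-contractions and conclude by submultiplicativity. You simply spell out the Markov-operator argument (positivity plus integral preservation, decomposing into positive and negative parts) that the paper leaves implicit.
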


  We will now verify \ref{en:kl_ly_bound} for the Ulam approximations. The main technical requirements are the following two lemmas, which we prove in Appendix \ref{sec:ulam_quasiholder_boundedness}.

  \begin{lemma}\label{cor:conditional_exp_quasi_holder_uniform_bound}
    If $\{Q_\epsilon\}_{\epsilon  > 0} \subseteq \mathcal{P}(\kappa)$ is a sequence of partitions with $\lim_{\epsilon \to 0} \diam(Q_\epsilon) = 0$ then there exists $\epsilon_2 > 0$ such that
    \begin{equation*}
      \sup_{0 < \epsilon \le \epsilon_2} \abs{\E_{Q_\epsilon}}_\beta \le 2d \left(1 + \frac{\kappa}{\sqrt[d]{\frac{3}{2}} - 1} \right)^\beta.
    \end{equation*}
  \end{lemma}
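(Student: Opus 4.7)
The plan is to show that for every $f \in \Va(\Omega)$ and every $\eta$ with $c\eta \le \eta_0$, where $c := 1 + \kappa/(\sqrt[d]{3/2} - 1)$, one has
\begin{equation*}
  \intf_{\R^d} \osc{\E_{Q_\epsilon} f, B_\eta(x)} dx \;\le\; 2d \intf_{\R^d} \osc{f, B_{c\eta}(x)} dx.
\end{equation*}
Multiplying by $\eta^{-\beta}$, rewriting $\eta^{-\beta} = c^\beta (c\eta)^{-\beta}$, and invoking the definition of $\abs{f}_\beta$ then yields $\abs{\E_{Q_\epsilon} f}_\beta \le 2d\, c^\beta \abs{f}_\beta$, which is the claimed bound. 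The threshold $\epsilon_2$ is selected so that $c\eta$ remains bounded by $\eta_0$ for the range of $\eta$ appearing in the seminorm, which is possible since $\diam(Q_\epsilon) \to 0$ by hypothesis.

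The first reduction is pointwise. Because $\E_{Q_\epsilon} f = \sum_{I \in Q_\epsilon} \avg{f}{I} \bfone_I$, the oscillation $\osc{\E_{Q_\epsilon} f, B_\eta(x)}$ vanishes unless $B_\eta(x)$ meets at least two cells, and otherwise is bounded by $\max_{I,J} \lvert \avg{f}{I} - \avg{f}{J} \rvert$ over cells $I,J$ meeting $B_\eta(x)$. Writing each such difference as the double-integral average of $f(u) - f(v)$ over $I \times J$ reduces the task to controlling $\lvert f(u) - f(v) \rvert$ for points $u \in I$ and $v \in J$ whose cells both meet the probing ball.

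The geometric heart of the argument lies in the inscribed-ball regularity of $\mathcal{P}(\kappa)$. For each cell $I$ meeting $B_\eta(x)$, the inscribed ball $B_I$ has radius at least $\diam(I)/(2\kappa)$, and $c$ is chosen precisely so that $m(B_{c\eta}(x)) \ge \tfrac{3}{2}\, m(B_\eta(x))$; this volumetric surplus forces the annulus $B_{c\eta}(x) \setminus B_\eta(x)$ to capture a ball-sized slice of each neighbouring cell, even after accounting for the possibly eccentric shape of $I$ relative to $B_I$. This is the source of the exponent $1/d$ on $3/2$ (since volume scales as the $d$-th power of the radius) and of the factor $\kappa$ in the denominator of $c$. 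The constant $2d$ then emerges from a coordinatewise covering or summation step bounding the multiplicity with which neighbouring cells contribute to the integral.

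The main obstacle I anticipate is the geometric packing estimate producing the sharp constant $c = 1 + \kappa/(\sqrt[d]{3/2} - 1)$: one must translate the volumetric enlargement condition into an actual inclusion and averaged comparison, exploiting the convexity of cells together with the ratio bound $\diam(I) \le \kappa \diam(B_I)$. Once this geometric lemma is in hand, the transfer from $\osc{\E_{Q_\epsilon} f, B_\eta(x)}$ to $\osc{f, B_{c\eta}(x)}$ is essentially routine, and the final step is the direct application of the definition of $\abs{\cdot}_\beta$ on the enlarged scale.
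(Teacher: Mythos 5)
Your plan is structured around a single scale-comparison inequality
\[
  \intf_{\R^d} \osc{\E_{Q_\epsilon} f, B_\eta(x)}\, dx \;\le\; 2d \intf_{\R^d} \osc{f, B_{c\eta}(x)}\, dx,
\]
but this is not what the paper proves, and your sketch contains several gaps that prevent it from standing on its own.

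First, and most seriously, you misidentify the source of the factor $2d$. In the paper's argument the constant $2d$ is the covering number $S(\eta_0 - \diam(Q_\epsilon), \eta_0)$ --- the minimal number of balls of radius $\eta_0 - \diam(Q_\epsilon)$ needed to cover a ball of radius $\eta_0$ --- which is bounded by $2d$ once $\diam(Q_\epsilon)$ is small enough (via B\"or\"oczky). It enters only when passing from the restricted seminorm $\abs{\cdot}_{\beta, \eta_0 - \diam(Q_\epsilon)}$ back to $\abs{\cdot}_\beta$ (Lemma \ref{prop:quasihold_equiv_seminorm}), and has nothing to do with the ``multiplicity with which neighbouring cells contribute to the integral.'' Your explanation is not a vague restatement of this step; it points to a different, unproven mechanism.

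Second, your plan does not actually control $\abs{\E_{Q_\epsilon}}_\beta$. After dividing by $\eta^\beta$ and invoking the definition of $\abs{f}_\beta$, you can only close the argument when $c\eta \le \eta_0$, i.e.\ $\eta \le \eta_0/c$. Since $c = 1 + \kappa/(\sqrt[d]{3/2} - 1) > 1$ is a fixed constant independent of $\epsilon$, the range $\eta \in (\eta_0/c, \eta_0]$ is never covered, no matter how small $\epsilon_2$ is chosen; your remark that ``$\epsilon_2$ is selected so that $c\eta$ remains bounded by $\eta_0$'' is not coherent. This is precisely the gap that the paper closes with the seminorm-rescaling step that produces the factor $2d$.

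Third, your proposed unified inequality would need to hold for arbitrarily small $\eta \ll \diam(Q_\epsilon)$, where neighbouring cells $I, J$ have diameter $\diam(Q_\epsilon) \gg c\eta$ and therefore do not fit inside $B_{c\eta}(x)$. In this regime the paper's estimate (Lemma \ref{prop:small_eps_size_bound}) necessarily compares against $\osc{f, B_{\eta + \diam(Q_\epsilon)}(\cdot)}$ --- a ball large enough to contain both cells --- and compensates for the larger scale by exploiting the small measure of the boundary neighbourhood (Lemma \ref{lemma:osc_support_boundary}, Lemma \ref{lemma:convex_inside_outside}). The constant $c$ then arises by balancing this small-$\eta$ estimate against the big-$\eta$ estimate (Lemma \ref{prop:big_eps_bound}) at the crossover scale $\eta'$ with $\diam(Q_\epsilon)/\eta' = \kappa/(\sqrt[d]{3/2} - 1)$, not by a direct inclusion of ``ball-sized slices'' of neighbouring cells inside the annulus. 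If you wish to pursue a single-scale comparison as written, it would require an essentially different argument (e.g.\ a chaining estimate for $\abs{\avg{f}{I}-\avg{f}{J}}$ in terms of small-scale oscillations integrated over the cells), which your sketch does not supply.

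Finally, note that your derivation assumes the Proposition \ref{thm:conditional_exp_quasi_holder_bound}-level bound must be established from scratch; the lemma being proven is in fact a short corollary of that proposition combined with the covering-number bound $S(\eta_0 - \diam(Q_\epsilon), \eta_0) \le 2d$. The geometric work --- the two-regime oscillation estimate, the convex-geometry bound on $m(B_\eta(\partial I))/m(I)$, and the seminorm equivalence --- all lives in the proof of that proposition, and any self-contained proof of the lemma would have to reproduce it.
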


  \begin{lemma}\label{lemma:bounded_cond_exp_quasiholder}
    If $Q \in \mathcal{P}(\kappa)$ then $\E_Q \in L(\Va(\Omega))$.
  \end{lemma}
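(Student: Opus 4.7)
The plan is to reduce the boundedness of $\E_Q$ to two simple observations: that $\E_Q$ is a contraction in $\lnorm{\cdot}$ and, crucially, that $\E_Q f$ is piecewise constant, so its oscillations are concentrated in a thin strip around the skeleton of $Q$. First I would dispatch the $L^1$ bound $\lnorm{\E_Q f} \le \lnorm{f}$, which is immediate from Jensen's inequality applied on each cell. The work therefore lies entirely in bounding the seminorm $\abs{\E_Q f}_\beta$.

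For fixed $\eta\in(0,\eta_0]$, the key observation is that if $B_\eta(x)$ is contained in a single element $I \in Q$, then $\E_Q f$ is constant on $B_\eta(x)$ and $\osc{\E_Q f, B_\eta(x)} = 0$. Otherwise, $\osc{\E_Q f, B_\eta(x)}$ is the maximum difference $\abs{\avg{f}{I} - \avg{f}{J}}$ over cells $I,J$ meeting $B_\eta(x)$, which is at worst $2\norm{\E_Q f}_{L^\infty} \le 2\norm{f}_{L^\infty}$. Consequently, if we define the boundary strip
\begin{equation*}
  E_\eta := \left\{ x \in \R^d : \dist\bigl(x, \textstyle\bigcup_{I \in Q}\partial I\bigr) \le \eta \right\},
\end{equation*}
then $\supp(\osc{\E_Q f, B_\eta(\cdot)}) \subseteq E_\eta$ and
\begin{equation*}
  \intf_{\R^d} \osc{\E_Q f, B_\eta(x)}\, dx \;\le\; 2\norm{f}_{L^\infty}\, m(E_\eta \cap (\Omega + B_\eta(0))).
\end{equation*}

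Since $Q$ is a finite collection of compact convex polytopes, the total $(d-1)$-dimensional boundary measure $M_Q := \sum_{I\in Q} \mathcal{H}^{d-1}(\partial I)$ is finite. A standard tube estimate for polytopes gives $m(E_\eta) \le C_d M_Q \eta$ for $\eta$ smaller than some threshold $\eta_Q$ depending on the geometry of $Q$, while for $\eta \in [\eta_Q,\eta_0]$ the crude bound $m(E_\eta \cap (\Omega + B_\eta(0))) \le m(\Omega + B_{\eta_0}(0))$ suffices. Applying Proposition \ref{prop:injection_l_infty} to replace $\norm{f}_{L^\infty}$ by a multiple of $\norm{f}_\beta$ and taking the supremum of $\eta^{-\beta}$ times the above integral, one obtains (using $\beta \le 1$ so that $\eta^{1-\beta}$ is increasing) a bound $\abs{\E_Q f}_\beta \le C_Q \norm{f}_\beta$, where $C_Q$ depends on $Q$ via $M_Q$, $\eta_Q$ and $m(\Omega)$.

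The main obstacle I anticipate is the geometric tube estimate $m(E_\eta) \le C_d M_Q \eta$; this is elementary for a single convex polytope (each face contributes a slab of width $\eta$ on each side, of volume at most $2\eta$ times the $(d-1)$-measure of the face), but some care is needed to sum over cells without double-counting and to handle overlaps of the $\eta$-neighbourhoods of distinct faces when $\eta$ is of the same order as $\diam(Q)$. Once this is established the remainder of the proof is routine, and note that since we do not need uniformity over a family $\{Q_\epsilon\}$ (that is the content of Lemma \ref{cor:conditional_exp_quasi_holder_uniform_bound}), dependence of $C_Q$ on $Q$ is harmless.
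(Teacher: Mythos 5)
Your argument is correct, and it is genuinely different from the paper's. The paper deduces Lemma~\ref{lemma:bounded_cond_exp_quasiholder} as a quick corollary of Proposition~\ref{thm:conditional_exp_quasi_holder_bound}: when $\diam(Q)<\eta_0$ the proposition gives the bound directly, and when $\diam(Q)\ge\eta_0$ it first rescales the reference radius via the seminorm-equivalence Lemma~\ref{prop:quasihold_equiv_seminorm} (replacing $\eta_0$ by $2\diam(Q)$) and then applies the proposition. That proposition, in turn, rests on the whole machinery of the appendix (Lemmas~\ref{lemma:conditional_exp_osc_sum}--\ref{lemma:convex_inside_outside}), and is engineered so that the resulting constant is essentially uniform over $\mathcal{P}(\kappa)$ once $\diam(Q)$ is small — exactly what is needed for Lemma~\ref{cor:conditional_exp_quasi_holder_uniform_bound}. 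Your proof bypasses that machinery entirely: you bound $\osc{\E_Q f, B_\eta(\cdot)}$ crudely by $2\norm{f}_{L^\infty}$, localize its support to a tube $E_\eta$ around the skeleton of $Q$, invoke a Steiner-type tube volume estimate $m(E_\eta)\lesssim_Q \eta$, and then pass from $\norm{f}_{L^\infty}$ back to $\norm{f}_\beta$ via Proposition~\ref{prop:injection_l_infty}. The price of this simplicity is that the constant $C_Q$ involves $\sum_{I\in Q}\mathcal{H}^{d-1}(\partial I)$ (and higher intrinsic volumes of the cells), which blows up under refinement; the bound through $\norm{f}_{L^\infty}$ is precisely the lossy step that the paper's sharper Lemmas~\ref{prop:small_eps_size_bound} and~\ref{prop:small_eps_osc_bound} are designed to avoid. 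As you correctly observe, for this lemma alone that loss is harmless, so your route is a legitimate and shorter self-contained alternative.

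Two small remarks on the details. First, the tube estimate you flag as the main obstacle follows cleanly from what the paper already proves: Lemma~\ref{lemma:convex_inside_outside} together with Steiner's formula gives $m(B_\eta(\partial I))\le 2\big(m(B_\eta(I))-m(I)\big)=2\sum_{k\ge1}W_k(I)\eta^k\le C(I,\eta_0)\,\eta$ for all $\eta\in(0,\eta_0]$, so the case split between ``small'' and ``large'' $\eta$ you describe is unnecessary and there is no threshold $\eta_Q$ to track — a single linear-in-$\eta$ bound with a $Q$-dependent constant suffices across the whole range. Second, to actually conclude $\E_Q\in L(\Va(\Omega))$ you should, as the paper does, add the (immediate) observation that $Q$ partitions $\Omega$, hence $\supp(\E_Q f)\subseteq\Omega$, so that $\E_Q f$ lands in $\Va(\Omega)$ and not merely in $\Va(\R^d)$.
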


  \begin{proposition}[\ref{en:kl_ly_bound} for Ulam approximations]\label{prop:kl_ly_quasiholder_ulam}
    Under the hypotheses of Theorem \ref{thm:stability_quasiholder_ulam} there exists $\alpha \in (0,1)$ and $C_3 > 0$ such that for all $f \in \Va(\Omega)$, $n \in \Z^+$ and $\epsilon \in [0,\epsilon_2]$ we have
    \begin{equation*}
      \norm{\LL_\epsilon^n f}_\beta \le \alpha^n \norm{f}_{\beta} + C_3 \lnorm{f}.
    \end{equation*}
    \begin{proof}
      By Proposition \ref{prop:ly_unperturbed_quasiholder} we have
      \begin{equation*}
        \abs{\LL f}_\beta \le \gamma \abs{f}_\beta + D \lnorm{f},
      \end{equation*}
      where $\gamma < 1$ and $D < \infty$. Let $\epsilon_2$ be as in Lemma \ref{cor:conditional_exp_quasi_holder_uniform_bound}. If $\epsilon \in [0, \epsilon_2]$ then
      \begin{equation}\label{eq:kl_ly_quasiholder_ulam_1}
        \abs{\LL_\epsilon f}_\beta \le \left(\sup_{0 < \epsilon \le \epsilon_2} \abs{\E_{Q_\epsilon}}_\beta \right)\abs{\LL f}_\beta
        \le \alpha \abs{f}_\beta + C \lnorm{f},
      \end{equation}
      where
      \begin{equation*}
        \alpha = \gamma 2d \left(1 + \frac{\kappa}{\sqrt[d]{\frac{3}{2}} - 1}\right)^\beta \text{ and } C = 2dD \left(1 + \frac{\kappa}{\sqrt[d]{\frac{3}{2}} - 1}\right)^\beta.
      \end{equation*}
      Note that $\alpha < 1$ by the hypotheses of Theorem \ref{thm:stability_quasiholder_ulam}. By iterating \eqref{eq:kl_ly_quasiholder_ulam_1} and applying Proposition \ref{prop:l1_bound_quashiolder_ulam}, for each $n \in \Z^+$ we have
      \begin{equation*}
        \abs{\LL_\epsilon^n f}_\beta \le \alpha^n \abs{f}_\beta + \frac{C}{1-\alpha} \lnorm{f}.
      \end{equation*}
      Finally, recalling the definition of $\norm{\cdot}_\beta$ and applying Proposition \ref{prop:l1_bound_quashiolder_ulam} again yields
      \begin{equation*}
        \norm{\LL_\epsilon^n f}_\beta \le \alpha^n \norm{f}_\beta + \left(1 + \frac{C}{1-\alpha}\right) \lnorm{f}.
      \end{equation*}
    \end{proof}
  \end{proposition}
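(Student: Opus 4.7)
The strategy is to chain together a uniform Lasota--Yorke inequality on the seminorm $\abs{\cdot}_\beta$ and iterate, then convert back to the full norm $\norm{\cdot}_\beta = \lnorm{\cdot} + \abs{\cdot}_\beta$ using the contraction of $\LL_\epsilon$ in $L^1$. The key inputs, all already established earlier in the paper, are Proposition \ref{prop:ly_unperturbed_quasiholder} (the Lasota--Yorke inequality for the unperturbed $\LL$), Lemma \ref{cor:conditional_exp_quasi_holder_uniform_bound} (the uniform bound on $\abs{\E_{Q_\epsilon}}_\beta$), and Proposition \ref{prop:l1_bound_quashiolder_ulam} ($L^1$-contraction of all $\LL_\epsilon$).

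First I would write $\LL_\epsilon f = \E_{Q_\epsilon} \LL f$ and bound the quasi-Hölder seminorm by composing the two ingredients: the operator bound from Lemma \ref{cor:conditional_exp_quasi_holder_uniform_bound} followed by the Lasota--Yorke inequality for $\LL$. Denoting the constant $C_\kappa = 2d(1 + \kappa/(\sqrt[d]{3/2}-1))^\beta$ from the lemma, this yields the one-step estimate
\begin{equation*}
\abs{\LL_\epsilon f}_\beta \;\le\; C_\kappa \bigl( \gamma \abs{f}_\beta + D \lnorm{f} \bigr) \;=\; \alpha \abs{f}_\beta + C \lnorm{f},
\end{equation*}
uniformly in $\epsilon \in [0, \epsilon_2]$, where $\alpha := C_\kappa \gamma$ and $C := C_\kappa D$. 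The standing hypothesis of Theorem \ref{thm:stability_quasiholder_ulam} is precisely the inequality $C_\kappa \gamma < 1$, which guarantees that $\alpha \in (0,1)$.

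Next I would iterate this one-step estimate. Applying it with $\LL_\epsilon^{n-1} f$ in place of $f$ and inserting Proposition \ref{prop:l1_bound_quashiolder_ulam} to bound $\lnorm{\LL_\epsilon^k f} \le \lnorm{f}$ for all intermediate $k$, the seminorm part contracts geometrically and the $L^1$-contribution forms a telescoping geometric series, giving
\begin{equation*}
\abs{\LL_\epsilon^n f}_\beta \;\le\; \alpha^n \abs{f}_\beta + \frac{C}{1-\alpha} \lnorm{f}.
\end{equation*}
Adding $\lnorm{\LL_\epsilon^n f} \le \lnorm{f}$ to both sides and estimating $\abs{f}_\beta \le \norm{f}_\beta$ produces the desired inequality with $C_3 = 1 + C/(1-\alpha)$.

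The routine part of this proof is the algebraic composition and iteration above; the genuine work is in supplying the uniform seminorm bound on the family of conditional expectations $\{\E_{Q_\epsilon}\}$. That bound, Lemma \ref{cor:conditional_exp_quasi_holder_uniform_bound}, is the real obstacle, as it requires controlling the oscillation of a function averaged over each polytope $I \in Q_\epsilon$ in terms of oscillations of the parent function on nearby balls, and must exploit the geometric regularity condition $\diam(I) \le \kappa \diam(B_I)$ in order to make the resulting constant $\epsilon$-independent. Since that combinatorial-geometric argument is deferred to the appendix, Proposition \ref{prop:kl_ly_quasiholder_ulam} itself follows cleanly from the three cited ingredients.
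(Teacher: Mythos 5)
Your proof is correct and follows essentially the same route as the paper: you use the same decomposition $\LL_\epsilon = \E_{Q_\epsilon}\LL$, the same uniform seminorm bound on $\E_{Q_\epsilon}$ from Lemma \ref{cor:conditional_exp_quasi_holder_uniform_bound}, the same Lasota--Yorke inequality for $\LL$, and the same iteration via the $L^1$-contraction of $\LL_\epsilon$, then pass to the full norm by adding $\lnorm{\LL_\epsilon^n f}\le\lnorm{f}$. Your identification of the uniform bound on $\{\E_{Q_\epsilon}\}_\epsilon$ as the genuine technical obstacle is exactly right.
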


  \begin{proof}[Proof of Theorem \ref{thm:stability_quasiholder_ulam}]
     With $\epsilon_1$ as in Proposition \ref{prop:kl_conv_quasiholder_ulam} and $\epsilon_2$ as in Proposition \ref{prop:kl_ly_quasiholder_ulam}, set $\epsilon_0 = \min\{\epsilon_1, \epsilon_2\}$. By Lemma \ref{lemma:bounded_cond_exp_quasiholder} and Propositions \ref{prop:kl_conv_quasiholder_ulam}, \ref{prop:l1_bound_quashiolder_ulam} and \ref{prop:kl_ly_quasiholder_ulam} the family of operators $\{ \LL_\epsilon \}_{0 \le \epsilon \le \epsilon_0}$ satisfies (KL). Upon noting that each $\LL_\epsilon$ is positive the required result follows from Theorem \ref{thm:stability_statistical_props_quasiholder}.
  \end{proof}

  \subsection{Stochastic perturbations}
  \label{sec:1dstochpert}

  We now consider the case where the Perron-Frobenius operator is perturbed by convolution with a stochastic kernel.

  \begin{definition}
    We say that $\{ q_\epsilon \}_{\epsilon > 0} \subseteq L^1(\R^d)$ approximates the identity if
    \begin{enumerate}
      \item Each $q_\epsilon$ is non-negative and satisfies $\lnorm{q_\epsilon} = 1$.
      \item For every $\delta > 0$ we have
      \begin{equation*}
        \lim_{\epsilon \to 0} \intf_{\abs{x} \ge \delta} q_\epsilon(x) dx = 0.
      \end{equation*}
    \end{enumerate}
  \end{definition}

  Let $\{ q_\epsilon \}_{\epsilon > 0}$ approximate the identity. We define the corresponding stochastically perturbed Perron-Frobenius operators by $\LL_\epsilon = (q_\epsilon * \LL) \bfone_{\Omega}$. As usual we set $\LL_0 = \LL$. Our main result for this section is the following.

  \begin{theorem}[Stability of statistical parameters under stochastic perturbations]\label{thm:stability_quasiholder_stochastic}
    Suppose that $\LL$ is a simple quasi-compact operator. Let $\{ q_\epsilon \}_{\epsilon > 0}$ approximate the identity, and let $\{\LL_\epsilon\}_{\epsilon \ge 0}$ be the corresponding stochastically perturbed Perron-Frobenius operators. If
    \begin{equation*}
      \left(1 + \frac{1}{\nu_d \eta_0^{d-\beta}} \sup_{0 < \eta \le \eta_0} \eta^{-\beta} m(B_\eta(\partial\Omega)) \right) < \frac{1}{\gamma},
    \end{equation*}
    where the constant $\gamma$ is from Proposition \ref{prop:ly_unperturbed_quasiholder},
    then $\{\LL_\epsilon\}_{\epsilon \ge 0}$ satisfies (KL). Consequently, the conclusion of Theorem \ref{thm:stability_statistical_props_quasiholder} holds.
  \end{theorem}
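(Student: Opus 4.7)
The plan is to verify that the family $\{\LL_\epsilon\}_{\epsilon \ge 0}$ satisfies the Keller--Liverani conditions (KL), with $\wnorm{\cdot}=\lnorm{\cdot}$ and $\norm{\cdot}=\norm{\cdot}_\beta$, and then invoke Theorem \ref{thm:stability_statistical_props_quasiholder}; positivity of each $\LL_\epsilon$ is immediate from $q_\epsilon\ge 0$. The structure mirrors the Ulam case: \ref{en:kl_l1_bound} is cheap via Young, \ref{en:kl_conv} follows from the standard approximation-of-the-identity estimate, and the real work is in the Lasota--Yorke inequality \ref{en:kl_ly_bound}, which is where the boundary hypothesis enters.

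For \ref{en:kl_l1_bound}, Young's inequality gives $\lnorm{q_\epsilon\ast \LL f}\le \lnorm{q_\epsilon}\lnorm{\LL f}=\lnorm{f}$, and multiplication by $\bfone_\Omega$ is an $L^1$-contraction; iterating, $\lnorm{\LL_\epsilon^n f}\le \lnorm{f}$. For \ref{en:kl_conv}, since $\LL f$ is supported in $\Omega$ we have $\LL_\epsilon f-\LL f=(q_\epsilon\ast \LL f-\LL f)\bfone_\Omega$, so
\begin{equation*}
\lnorm{\LL_\epsilon f-\LL f}\le \intf q_\epsilon(y)\intf \abs{(\LL f)(x-y)-(\LL f)(x)}\,dx\,dy.
\end{equation*}
Splitting at $\abs{y}=\delta$, the near-field is bounded by $\delta^\beta \abs{\LL f}_\beta$ using the oscillation defining $\abs{\cdot}_\beta$, and the far-field is at most $2\lnorm{\LL f}\intf_{\abs{y}\ge\delta}q_\epsilon$. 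Proposition \ref{prop:ly_unperturbed_quasiholder} gives a uniform bound on $\abs{\LL f}_\beta$ for $\norm{f}_\beta\le 1$; choosing $\delta=\delta(\epsilon)\to 0$ slowly enough that $\intf_{\abs{y}\ge\delta(\epsilon)}q_\epsilon \to 0$ produces an upper-semicontinuous $\tau$ with $\tau(\epsilon)\to 0$.

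The heart of the proof is \ref{en:kl_ly_bound}. Convolution does not increase the quasi-H\"older seminorm: by the translation-invariance of $\osc(\cdot,B_\eta(x))$ in $x$ and Fubini,
\begin{equation*}
\intf \osc(q_\epsilon\ast g,B_\eta(x))\,dx \le \intf q_\epsilon(z)\intf \osc(g,B_\eta(x-z))\,dx\,dz = \intf \osc(g,B_\eta(x))\,dx,
\end{equation*}
so $\abs{q_\epsilon\ast \LL f}_\beta\le \abs{\LL f}_\beta$. Multiplication by $\bfone_\Omega$ only creates oscillation on an $\eta$-neighbourhood of $\partial\Omega$, yielding
\begin{equation*}
\abs{(q_\epsilon\ast \LL f)\bfone_\Omega}_\beta \le \abs{\LL f}_\beta + \left(\sup_{0<\eta\le\eta_0}\eta^{-\beta}m(B_\eta(\partial\Omega))\right)\norm{q_\epsilon\ast \LL f}_{L^\infty}.
\end{equation*}
Bounding $\norm{q_\epsilon\ast \LL f}_{L^\infty}\le \norm{\LL f}_{L^\infty}$ by Young and then applying Proposition \ref{prop:injection_l_infty} (which controls $\norm{\LL f}_{L^\infty}$ by $\abs{\LL f}_\beta$ and $\lnorm{f}$), followed by Proposition \ref{prop:ly_unperturbed_quasiholder}, gives
\begin{equation*}
\abs{\LL_\epsilon f}_\beta \le \gamma\left(1+\frac{1}{\nu_d\eta_0^{d-\beta}}\sup_{0<\eta\le\eta_0}\eta^{-\beta}m(B_\eta(\partial\Omega))\right)\abs{f}_\beta + C\lnorm{f},
\end{equation*}
for a constant $C<\infty$ independent of $\epsilon$. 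The hypothesis of the theorem forces the coefficient of $\abs{f}_\beta$ to be some $\alpha<1$, and iterating together with \ref{en:kl_l1_bound} delivers \ref{en:kl_ly_bound} exactly as in Proposition \ref{prop:kl_ly_quasiholder_ulam}.

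The main obstacle is the boundary defect produced by the cutoff $\bfone_\Omega$: convolution alone contracts both $\lnorm{\cdot}$ and $\abs{\cdot}_\beta$, but restoring the support to $\Omega$ inflates the quasi-H\"older seminorm by a multiple of $\norm{\cdot}_{L^\infty}$ supported near $\partial\Omega$. The continuous embedding of $\Va(\Omega)$ into $L^\infty$ (Proposition \ref{prop:injection_l_infty}) converts this $L^\infty$ term into a perturbation of the Lasota--Yorke inequality, and the smallness condition on $\gamma$ and the boundary-growth quantity $\sup_\eta \eta^{-\beta}m(B_\eta(\partial\Omega))$ is exactly what is needed to keep the resulting contraction constant below $1$. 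Once (KL) is verified, Theorem \ref{thm:stability_statistical_props_quasiholder} yields the stability of the invariant density, variance, and rate function.
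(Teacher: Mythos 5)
Your proposal is correct and follows essentially the same route as the paper: KL2 via Young's inequality and the $L^1$-contraction of the cutoff, KL1 by splitting the convolution integral at radius $\delta$, and KL3 by showing that convolution contracts the quasi-H\"older seminorm (via Fubini) while multiplication by $\bfone_\Omega$ adds a term supported near $\partial\Omega$, which is then controlled by the $L^\infty$-embedding of Proposition~\ref{prop:injection_l_infty}; the resulting Lasota--Yorke coefficient matches the threshold in the hypothesis exactly. The only small deviation is in the far-field part of KL1, where you bound by $2\lnorm{\LL f}$ directly rather than passing through $\norm{\LL f}_{L^\infty}$ as the paper does; both work, and yours is marginally more elementary since it avoids invoking the embedding there.
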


  \begin{remark}
    It is not obvious when $\sup_{0 < \eta \le \eta_0 } \eta^{-\beta}m(B_\eta(\partial \Omega)) < \infty$. This is the case if, for example, $\Omega$ is convex \cite[Theorem 6.6]{gruber2010convex}.
  \end{remark}

  As in the previous section, we prove Theorem \ref{thm:stability_quasiholder_stochastic} by showing that $\{\LL_\epsilon \}_{\epsilon \ge 0}$ satisfies (KL).

  \begin{proposition}[\ref{en:kl_conv} for stochastic perturbations]\label{prop:kl1_stochastic_quasiholder}
    If $\{q_\epsilon\}_{\epsilon > 0}$ approximates the identity and $\{\LL_\epsilon\}_{\epsilon \ge 0}$ denotes the corresponding stochastically perturbed Perron-Frobenius operators, then
    \begin{equation*}
      \lim_{\epsilon \to 0} \tnorm{\LL_\epsilon - \LL_0} = 0.
    \end{equation*}
    \begin{proof}
      We have
      \begin{equation} \label{eq:kl1_stochastic_quasiholder_1}
        \tnorm{\LL_\epsilon - \LL_0} \le \sup_{\norm{f}_\beta=1} \intf_\Omega \intf_{\R^d} \abs{(\LL f)(x-y) - (\LL f)(x)}q_{\epsilon}(y) dy  dx.
      \end{equation}
      Let $\delta \in (0, \eta_0)$. We break the inner integral in \eqref{eq:kl1_stochastic_quasiholder_1} into two parts: the component where $y \in B_\delta(0)$ and the component where $y \in \R^d \setminus B_\delta(0)$. Dealing with the first component we have
      \begin{equation}\begin{split} \label{eq:kl1_stochastic_quasiholder_2}
        \int_\Omega \int_{B_\delta(0)} &\abs{(\LL f)(x-y) - (\LL f)(x)}q_{\epsilon}(y)\, \mathrm{d}y \,\mathrm{d}x \\
        &\le \intf_\Omega \osc{\LL f, B_\delta(x)} dx \le \delta^\beta \norm{\LL f}_\beta.
      \end{split}\end{equation}
      Whereas for the second component we find
      \begin{equation}\begin{split} \label{eq:kl1_stochastic_quasiholder_3}
        \int_\Omega \int_{\R^d \setminus B_\delta(0)} &\abs{(\LL f)(x-y) - (\LL f)(x)}q_{\epsilon}(y) \, \mathrm{d}y \,\mathrm{d}x \\
        &\le 2 \norm{\LL f}_{L^\infty} \intf_\Omega \intf_{\R^d \setminus B_\delta(0)} q_{\epsilon}(y) dy dx \\
        &\le 2 \frac{\max\{1, \eta_0^\beta\}}{\nu_d\eta_0^d} \norm{\LL f}_\beta \intf_{\R^d \setminus B_\delta(0)} q_{\epsilon}(y) dy,
      \end{split}\end{equation}
      where the final inequality is obtained from Proposition \ref{prop:injection_l_infty}. Combining \eqref{eq:kl1_stochastic_quasiholder_1}, \eqref{eq:kl1_stochastic_quasiholder_2} and \eqref{eq:kl1_stochastic_quasiholder_3} we obtain
      \begin{equation*}
        \tnorm{\LL_\epsilon - \LL_0} \le \delta^\beta \norm{\LL}_\beta + 2\frac{\max\{1, \eta_0^\beta\}}{\nu_d\eta_0^d} \norm{\LL}_\beta \intf_{\R^d \setminus B_\delta(0)} q_{\epsilon}(y) dy.
      \end{equation*}
      As $\{q_\epsilon \}_{\epsilon > 0}$ is an approximation to the identity, taking $\epsilon \to 0$ yields
      \begin{equation*}
        \limsup_{\epsilon \to 0} \tnorm{\LL_\epsilon - \LL_0} \le \delta^\beta \norm{\LL}_\beta.
      \end{equation*}
      We conclude the proof upon recalling that $\delta$ may be chosen to be arbitrarily small.
    \end{proof}
  \end{proposition}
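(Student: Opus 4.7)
The plan is to estimate $\lnorm{(\LL_\epsilon - \LL)f}$ uniformly over $f$ in the unit ball of $(\Va(\Omega), \norm{\cdot}_\beta)$. Since $q_\epsilon$ has unit $L^1$-mass, for $x \in \Omega$ one can write
\begin{equation*}
((\LL_\epsilon-\LL)f)(x) = \intf_{\R^d} \bigl((\LL f)(x-y) - (\LL f)(x)\bigr)\, q_\epsilon(y)\, dy,
\end{equation*}
so by Tonelli the triple-norm is bounded by the double integral $\intf_\Omega \intf_{\R^d} \abs{(\LL f)(x-y) - (\LL f)(x)} q_\epsilon(y)\, dy\, dx$. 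The strategy is then to split the inner integral into a ``near'' part $\abs{y}<\delta$ and a ``far'' part $\abs{y}\ge \delta$, for a small $\delta \in (0,\eta_0]$ to be sent to zero after $\epsilon \to 0$.

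For the near part, one can bound the integrand pointwise in $y$ by $\osc{\LL f, B_\delta(x)}$, which reduces the near contribution to $\intf_\Omega \osc{\LL f, B_\delta(x)}\, dx$. By the definition of $\abs{\cdot}_\beta$ this is at most $\delta^\beta \abs{\LL f}_\beta$, which is in turn controlled by $\delta^\beta \norm{\LL}_\beta \norm{f}_\beta$ using the boundedness of $\LL$ on $\Va(\Omega)$ from Proposition \ref{prop:quasiholder_condition_S}.

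For the far part, one crudely bounds the integrand by $2\norm{\LL f}_{L^\infty}$ and then uses the continuous inclusion $\Va(\Omega) \hookrightarrow L^\infty(m)$ of Proposition \ref{prop:injection_l_infty} to get a constant multiple of $\norm{\LL f}_\beta \intf_{\abs{y} \ge \delta} q_\epsilon(y)\, dy$. Taking $\sup$ over $\norm{f}_\beta = 1$ and then $\limsup_{\epsilon\to 0}$, the far part vanishes because $\{q_\epsilon\}$ approximates the identity, leaving the bound $\limsup_{\epsilon\to 0} \tnorm{\LL_\epsilon - \LL_0} \le \delta^\beta \norm{\LL}_\beta$. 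Finally, letting $\delta \to 0^+$ yields the claim.

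The only mild subtlety is keeping the estimates intrinsic to $\Omega$ despite the kernel being a convolution on $\R^d$: the outer integration is over $\Omega$ only, and the oscillation and quasi-H\"older norm are defined on $\R^d$ with functions supported in $\Omega$, so no boundary correction is needed in the near part. The main quantitative ingredient is the $L^\infty$-bound from Proposition \ref{prop:injection_l_infty}, which is what converts the $L^\infty$ estimate of the far part into a $\norm{\cdot}_\beta$ estimate and closes the argument.
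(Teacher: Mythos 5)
Your argument is essentially identical to the paper's: you start from the same bound $\tnorm{\LL_\epsilon - \LL_0} \le \sup_{\norm{f}_\beta=1} \intf_\Omega \intf_{\R^d} \abs{(\LL f)(x-y) - (\LL f)(x)}q_\epsilon(y)\, dy\, dx$, split the inner integral at radius $\delta$, control the near part via the oscillation seminorm and the far part via Proposition \ref{prop:injection_l_infty}, and then send $\epsilon \to 0$ followed by $\delta \to 0$. The proposal is correct and matches the paper's proof in structure and in every key step.
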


  \begin{proposition}[\ref{en:kl_l1_bound} for stochastic perturbations]\label{prop:kl2_stochastic_quasiholder}
    If $\{q_\epsilon\}_{\epsilon > 0}$ approximates the identity and $\{\LL_\epsilon\}_{\epsilon \ge 0}$ denotes the corresponding stochastically perturbed Perron-Frobenius operators, then for each $\epsilon \ge 0$ and $n \in \Z^+$ we have
    \begin{equation*}
      \lnorm{\LL_\epsilon^n} \le 1.
    \end{equation*}
  \end{proposition}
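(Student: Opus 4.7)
The proof is a direct chain of three contractive estimates in $L^1$, followed by an induction on $n$. The plan is as follows.

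First, I would establish the single-step contraction $\lnorm{\LL_\epsilon f}\le \lnorm{f}$ for every $f\in \Va(\Omega)$ by peeling off the three operations defining $\LL_\epsilon = (q_\epsilon * \LL(\cdot))\bfone_\Omega$ from the outside in. Multiplication by $\bfone_\Omega$ is a pointwise projection and hence obviously a contraction in $L^1$, giving $\lnorm{(q_\epsilon * \LL f)\bfone_\Omega}\le \lnorm{q_\epsilon * \LL f}$. Young's convolution inequality combined with $\lnorm{q_\epsilon}=1$ (part of the definition of an approximation to the identity) yields $\lnorm{q_\epsilon * \LL f}\le \lnorm{q_\epsilon}\lnorm{\LL f}=\lnorm{\LL f}$. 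Finally, $\LL$ is the Perron-Frobenius operator of a non-singular transformation and is therefore a contraction on $L^1(m)$, so $\lnorm{\LL f}\le \lnorm{f}$.

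Chaining these three inequalities gives $\lnorm{\LL_\epsilon f}\le \lnorm{f}$ for every $\epsilon\ge 0$ and $f\in\Va(\Omega)$. Note that after one application $\LL_\epsilon f$ is supported in $\Omega$, so the same estimate may be applied to $\LL_\epsilon f$ in place of $f$, and a trivial induction on $n$ gives $\lnorm{\LL_\epsilon^n f}\le \lnorm{f}$ for every $n\in\Z^+$. Taking the supremum over $f$ with $\lnorm{f}=1$ yields $\lnorm{\LL_\epsilon^n}\le 1$, which is exactly \ref{en:kl_l1_bound} with $C_1=K_1=1$.

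There is no real obstacle here; the only thing to watch is that all three operations compose legitimately on $\Va(\Omega)\subseteq L^1(m)$, which is immediate since each operator maps $L^1$ to $L^1$. The case $\epsilon=0$ is subsumed because $\LL_0=\LL$ is already an $L^1$ contraction.
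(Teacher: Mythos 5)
Your proof is correct and fills in exactly the standard argument; the paper states this proposition without any proof (as it does for the Ulam analogue, Proposition \ref{prop:l1_bound_quashiolder_ulam}, where the justification is a one-liner), treating the $L^1$ contractivity as immediate. Your three-step peeling — cut-off by $\bfone_\Omega$, Young's inequality with $\lnorm{q_\epsilon}=1$, and $L^1$-contractivity of the Perron--Frobenius operator of a non-singular map — is precisely the content the authors take for granted, and the remark that $\LL_\epsilon f$ lands back in functions supported on $\Omega$ is the right observation that makes the induction on $n$ legitimate.
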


  \begin{lemma}\label{lemma:stochastic_quasiholder_boundedness}
    If $\{q_\epsilon\}_{\epsilon > 0}$ approximates the identity and $\{\LL_\epsilon\}_{\epsilon \ge 0}$ denotes the corresponding stochastically perturbed Perron-Frobenius operators, then for every $\epsilon > 0$ and $f \in \Va(\Omega)$ we have
    \begin{equation*}\begin{split}
      \abs{\LL_\epsilon f}_\beta \le &\left(1 + \frac{1}{\nu_d \eta_0^{d-\beta}} \sup_{0 < \eta \le \eta_0} \eta^{-\beta} m(B_\eta(\partial\Omega)) \right)\abs{\LL f}_\beta \\
      &+
      \frac{1}{\nu_d \eta_0^{d-\beta}} \left( \sup_{0 < \eta \le \eta_0} \eta^{-\beta} m(B_\eta(\partial\Omega)) \right)\lnorm{f}.
    \end{split}\end{equation*}
    \begin{proof}
      Fix $\epsilon > 0$ and $\eta \in (0, \eta_0]$. Since
      \begin{equation*}
        \osc{\LL_\epsilon f, B_\eta(x)} = \esup_{y_1,y_2 \in B_\eta(x)} \abs{(q_ \epsilon * \LL)(y_1) \bfone_\Omega(y_1) - (q_ \epsilon * \LL)(y_2) \bfone_\Omega(y_2) },
      \end{equation*}
      we may consider three (not necessarily distinct) cases when bounding $\osc{\LL_\epsilon f, B_\eta(x)}$. Depending on how many of the characteristic function terms contribute to the essential supremum, we either have $\osc{\LL_\epsilon f, B_\eta(x)} = 0$,
      \begin{equation}\label{eq:stochastic_quasiholder_boundedness_1}
        \osc{\LL_\epsilon, B_\eta(x)} = \esup_{y_1 \in B_\eta(x)} \abs{(q_\epsilon * \LL f)(y_1)},
      \end{equation}
      or
      \begin{equation}\label{eq:stochastic_quasiholder_boundedness_2}
        \osc{\LL_\epsilon f, B_\eta(x)} = \osc{q_\epsilon * \LL f, B_\eta(x)}.
      \end{equation}
      As the support of $f$ is a subset of $\Omega$, if $x \in \R^d \setminus B_\eta(\Omega)$ then $\osc{\LL_\epsilon f, B_\eta(x)} = 0$.
      By a similar argument, if \eqref{eq:stochastic_quasiholder_boundedness_1} holds, \eqref{eq:stochastic_quasiholder_boundedness_2} does not hold, and $\osc{\LL_\epsilon f, B_\eta(x)} \ne 0$, then $x \in B_\eta(\Omega)$.
      Hence,
      \begin{equation}\begin{split}\label{eq:stochastic_quasiholder_boundedness_4}
        \intf_{\R^d} \osc{\LL_\epsilon f, B_\eta(x)} dx &\le \intf_{\R^d} \osc{q_\epsilon * \LL f, B_\eta(x)} dx\\
        &+ \intf_{B_\eta(\Omega)} \enspace \esup_{y_1 \in B_\eta(x)} \abs{(q_\epsilon * \LL f)(y_1)} dx.
      \end{split}\end{equation}
      We now bound the quantity \eqref{eq:stochastic_quasiholder_boundedness_1}. As $\lnorm{\LL} \le 1$ and by Proposition \ref{prop:injection_l_infty} we have
      \begin{equation*}\begin{split}
         \abs{(q_\epsilon * \LL f)(y_1)} &= \abs{\intf_{\R^d} q_\epsilon(y)( \LL f)(y_1 -y)  dy} \\
         &\le \norm{\LL f}_\infty
         \le \frac{1}{\nu_d \eta_0^d}(\eta_0^\beta\abs{\LL f}_\beta + \lnorm{f}).
      \end{split}\end{equation*}
      Hence,
      \begin{equation}\label{eq:stochastic_quasiholder_boundedness_5}
        \intf_{B_\eta(\Omega)} \enspace \esup_{y_1 \in B_\eta(x)} \abs{(q_\epsilon * \LL f)(y_1)} dx \le \frac{m(B_\eta(\partial\Omega))}{\nu_d \eta_0^d}(\eta_0^\beta\abs{\LL f}_\beta + \lnorm{f}).
      \end{equation}
      Alternatively, to bound \eqref{eq:stochastic_quasiholder_boundedness_2} we note that
      \begin{equation*}\begin{split}
        \textup{\textrm{osc}}(q_\epsilon * \LL f&, B_\eta(x)) \\
        &=
        \esup_{y_1,y_2 \in B_\eta(x)} \abs{\intf_{\R^d} q_\epsilon(y)(( \LL f)(y_1 -y) - ( \LL f)(y_2 -y)) dy} \\
        &\le \intf_{\R^d} q_\epsilon(y) \osc{\LL f, B_\eta(x-y)} dy.
      \end{split}\end{equation*}
      By changing variables and applying Fubini-Tonelli we obtain
      \begin{equation}\begin{split}\label{eq:stochastic_quasiholder_boundedness_6}
        \int_{\R^d} &\osc{q_\epsilon * \LL, B_\eta(x)} \mathrm{d}x \\
        &\le \intf_{\R^d} \intf_{\R^d} q_\epsilon(y) \osc{\LL f, B_\eta(x-y)} dy dx \\
        &\le \intf_{\R^d} \left(\intf_{\R^d} q_\epsilon(x-y) dx\right) \osc{\LL f, B_\eta(y)} dy \\
         &\le \intf_{\R^d} \osc{\LL f, B_\eta(y)} dy.
      \end{split}\end{equation}
      Applying \eqref{eq:stochastic_quasiholder_boundedness_5} and \eqref{eq:stochastic_quasiholder_boundedness_6} to \eqref{eq:stochastic_quasiholder_boundedness_4} yields
      \begin{equation*}\begin{split}
        \intf_{\R^d} \osc{\LL_\epsilon f, B_\eta(x)} dx
        \le &\intf_{\R^d} \osc{\LL f, B_{\eta}(x)} dx\\ &+  \frac{m(B_\eta(\partial\Omega))}{\nu_d \eta_0^d}(\eta_0^\beta\abs{\LL f}_\beta + \lnorm{f}).
      \end{split}\end{equation*}
      Thus,
      \begin{equation*}
        \abs{\LL_\epsilon f}_\beta \le \abs{\LL f}_\beta +
        \frac{1}{\nu_d \eta_0^d} \left(\sup_{0 < \eta \le \eta_0} \eta^{-\beta} m(B_\eta(\partial\Omega))\right) (\eta_0^\beta\abs{\LL f}_\beta + \lnorm{f}),
      \end{equation*}
      which yields the required bound.
    \end{proof}
  \end{lemma}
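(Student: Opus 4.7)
The plan is to control
\begin{equation*}
  \abs{\LL_\epsilon f}_\beta = \sup_{0 < \eta \le \eta_0} \eta^{-\beta} \intf_{\R^d} \osc{\LL_\epsilon f, B_\eta(x)} dx
\end{equation*}
by splitting the integrand according to how $B_\eta(x)$ sits relative to $\Omega$. Since $\LL_\epsilon f = (q_\epsilon * \LL f)\bfone_\Omega$, the oscillation of $\LL_\epsilon f$ over $B_\eta(x)$ vanishes when $B_\eta(x) \cap \Omega = \emptyset$; it equals $\osc{q_\epsilon * \LL f, B_\eta(x)}$ when $B_\eta(x) \subseteq \Omega$ (no jump contributed by $\bfone_\Omega$); and it is controlled by $\esup_{y \in B_\eta(x)} \abs{(q_\epsilon * \LL f)(y)}$ whenever $B_\eta(x)$ straddles $\partial \Omega$. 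Consequently, $\intf_{\R^d} \osc{\LL_\epsilon f, B_\eta(x)}\,dx$ decomposes into a ``convolution-smoothness'' contribution that we extend to all of $\R^d$, plus an ``indicator-jump'' contribution supported on $B_\eta(\partial \Omega)$.

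For the convolution term I would use that $q_\epsilon$ is a nonnegative probability density: by the triangle inequality applied before taking essential suprema,
\begin{equation*}
  \osc{q_\epsilon * \LL f, B_\eta(x)} \le \intf_{\R^d} q_\epsilon(y)\, \osc{\LL f, B_\eta(x-y)} dy.
\end{equation*}
Integrating in $x$ and applying Fubini--Tonelli, the translation $x \mapsto x-y$ together with $\lnorm{q_\epsilon}=1$ delivers $\intf_{\R^d} \osc{q_\epsilon * \LL f, B_\eta(x)}\,dx \le \intf_{\R^d} \osc{\LL f, B_\eta(x)}\,dx$, whose $\eta^{-\beta}$-weighted supremum contributes exactly $\abs{\LL f}_\beta$, accounting for the leading ``$1 \cdot \abs{\LL f}_\beta$'' on the right-hand side.

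For the indicator-jump term I would bound $\esup_{B_\eta(x)} \abs{q_\epsilon * \LL f} \le \norm{q_\epsilon * \LL f}_{L^\infty} \le \norm{\LL f}_{L^\infty}$ using $\norm{q_\epsilon * g}_{L^\infty} \le \lnorm{q_\epsilon}\norm{g}_{L^\infty} = \norm{g}_{L^\infty}$, and then invoke Proposition \ref{prop:injection_l_infty} together with $\lnorm{\LL f} \le \lnorm{f}$ to get $\norm{\LL f}_{L^\infty} \le (\nu_d \eta_0^d)^{-1}(\eta_0^\beta \abs{\LL f}_\beta + \lnorm{f})$. Integrating this uniform bound over $B_\eta(\partial \Omega)$ produces a factor $m(B_\eta(\partial \Omega))$; multiplying by $\eta^{-\beta}$ and taking the supremum over $\eta \in (0, \eta_0]$ then generates the prefactor $\sup_{0 < \eta \le \eta_0} \eta^{-\beta} m(B_\eta(\partial \Omega))$ that, after absorbing the $\eta_0^\beta$ from the embedding, dresses both $\abs{\LL f}_\beta$ (with $(\nu_d \eta_0^{d-\beta})^{-1}$) and $\lnorm{f}$ as stated.

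The main obstacle will be the geometric book-keeping in the case split. One must argue that the only $x$'s for which multiplication by $\bfone_\Omega$ creates a new jump lie in the $\eta$-neighbourhood of $\partial \Omega$, and that when both the indicator-jump and the underlying oscillation of $q_\epsilon * \LL f$ contribute to $\osc{\LL_\epsilon f, B_\eta(x)}$ simultaneously, the single ``sup'' bound $\esup_{B_\eta(x)}\abs{q_\epsilon * \LL f}$ already dominates the oscillation, so no double-counting occurs. Once this case analysis is made clean, the convolution estimate, the $L^\infty$-embedding, and Fubini assemble routinely into the stated inequality.
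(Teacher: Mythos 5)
Your proposal is correct and follows the paper's proof essentially verbatim: you split $\osc{\LL_\epsilon f, B_\eta(x)}$ into a convolution-oscillation piece (handled by moving the oscillation inside the convolution and applying Fubini--Tonelli with $\lnorm{q_\epsilon}=1$) and an indicator-jump piece supported on $B_\eta(\partial\Omega)$ (handled by the Young-type bound $\norm{q_\epsilon * g}_{L^\infty}\le\norm{g}_{L^\infty}$ followed by Proposition \ref{prop:injection_l_infty} and $\lnorm{\LL f}\le\lnorm{f}$). Your explicit identification of the jump support as $B_\eta(\partial\Omega)$ is if anything cleaner than the paper's intermediate step, which writes the integration domain as $B_\eta(\Omega)$ in \eqref{eq:stochastic_quasiholder_boundedness_4} but then correctly uses $m(B_\eta(\partial\Omega))$ in \eqref{eq:stochastic_quasiholder_boundedness_5}.
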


  \begin{proposition}[\ref{en:kl_ly_bound} for stochastic perturbations]\label{proposition:kl3_stochastic_quasiholder}
    Under the hypotheses of Theorem \ref{thm:stability_quasiholder_stochastic} there exists $\alpha \in (0,1)$ and $C_3 > 0$ such that for every $\epsilon \ge 0$ and $f \in \Va(\Omega)$ we have
    \begin{equation*}
      \norm{\LL_\epsilon^n f}_\beta \le \alpha^n \norm{f}_\beta + C_3 \lnorm{f}.
    \end{equation*}
    \begin{proof}
      By Lemma \ref{lemma:stochastic_quasiholder_boundedness}, and Propositions \ref{prop:ly_unperturbed_quasiholder} and \ref{prop:kl2_stochastic_quasiholder}, for each $\epsilon > 0$ and $f \in \Va(\Omega)$ we have
      \begin{equation*}
        \abs{\LL_\epsilon f}_\beta \le \alpha \abs{f}_\beta + C\lnorm{f}
      \end{equation*}
      where
      \begin{equation*}
        \alpha = \gamma \left(1 + \frac{\eta_0^\beta}{\nu_d \eta_0^d} \sup_{0 < \eta \le \eta_0} \eta^{-\beta} m(B_\eta(\partial\Omega)) \right)
      \end{equation*}
      and
      \begin{equation*}
        C = D + \frac{(D+1)\eta_0^\beta}{\nu_d \eta_0^d} \left( \sup_{0 < \eta \le \eta_0} \eta^{-\beta} m(B_\eta(\partial\Omega)) \right).
      \end{equation*}
      Note that $\alpha < 1$ by the hypotheses of Theorem \ref{thm:stability_quasiholder_stochastic}. The remainder of the argument is identical to that of Proposition \ref{prop:kl_ly_quasiholder_ulam}.
    \end{proof}
  \end{proposition}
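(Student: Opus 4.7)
The plan is to mirror the argument used in Proposition \ref{prop:kl_ly_quasiholder_ulam}, exploiting the one-step bound from Lemma \ref{lemma:stochastic_quasiholder_boundedness} together with the unperturbed Lasota-Yorke inequality from Proposition \ref{prop:ly_unperturbed_quasiholder} and the uniform $L^1$-contraction from Proposition \ref{prop:kl2_stochastic_quasiholder}.

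First, I would substitute the unperturbed bound $\abs{\LL f}_\beta \le \gamma \abs{f}_\beta + D \lnorm{f}$ into the conclusion of Lemma \ref{lemma:stochastic_quasiholder_boundedness}. Writing
\begin{equation*}
  M := \frac{1}{\nu_d \eta_0^{d-\beta}} \sup_{0 < \eta \le \eta_0} \eta^{-\beta} m(B_\eta(\partial\Omega)),
\end{equation*}
this yields, for every $\epsilon > 0$ and $f \in \Va(\Omega)$,
\begin{equation*}
  \abs{\LL_\epsilon f}_\beta \le \gamma(1+M) \abs{f}_\beta + \bigl(D(1+M) + M\bigr) \lnorm{f}.
\end{equation*}
Set $\alpha_0 := \gamma(1+M)$ and $C_0 := D(1+M) + M$. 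The hypothesis of Theorem \ref{thm:stability_quasiholder_stochastic} is precisely the statement that $\alpha_0 < 1$. The $\epsilon = 0$ case is covered directly by Proposition \ref{prop:ly_unperturbed_quasiholder}, so the one-step inequality $\abs{\LL_\epsilon f}_\beta \le \alpha_0 \abs{f}_\beta + C_0 \lnorm{f}$ holds uniformly for all $\epsilon \ge 0$.

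Second, I would iterate this one-step inequality. Applying it to $\LL_\epsilon^{n-1} f$ and then using Proposition \ref{prop:kl2_stochastic_quasiholder} to control $\lnorm{\LL_\epsilon^{n-1} f} \le \lnorm{f}$ gives a geometric sum, yielding
\begin{equation*}
  \abs{\LL_\epsilon^n f}_\beta \le \alpha_0^n \abs{f}_\beta + \frac{C_0}{1 - \alpha_0} \lnorm{f}
\end{equation*}
for every $n \in \Z^+$ and $\epsilon \ge 0$.

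Finally, to convert to the full norm $\norm{\cdot}_\beta = \lnorm{\cdot} + \abs{\cdot}_\beta$, I add $\lnorm{\LL_\epsilon^n f} \le \lnorm{f}$ to the previous inequality, which produces the desired bound with $\alpha = \alpha_0 \in (0,1)$ and $C_3 = 1 + C_0/(1-\alpha_0)$. There is no real obstacle here: all the analytic work has already been done in Lemma \ref{lemma:stochastic_quasiholder_boundedness} (which is the non-trivial input) and in Proposition \ref{prop:ly_unperturbed_quasiholder}; the only thing the hypothesis buys us is that the product $\gamma(1+M)$ is strictly less than $1$, which is exactly what is needed to close the geometric iteration.
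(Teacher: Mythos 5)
Your proof is correct and follows essentially the same route as the paper: substitute the unperturbed Lasota--Yorke inequality into the one-step bound from Lemma \ref{lemma:stochastic_quasiholder_boundedness}, observe that the hypothesis of Theorem \ref{thm:stability_quasiholder_stochastic} forces the resulting coefficient $\gamma(1+M)<1$, and iterate using $\lnorm{\LL_\epsilon^n}\le 1$. Your constants $\alpha_0=\gamma(1+M)$ and $C_0=D(1+M)+M=D+(D+1)M$ agree with the paper's $\alpha$ and $C$ (note $\eta_0^\beta/(\nu_d\eta_0^d)=1/(\nu_d\eta_0^{d-\beta})$), and the geometric-sum closure is the same as in Proposition \ref{prop:kl_ly_quasiholder_ulam}.
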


  \begin{proof}[Proof of Theorem \ref{thm:stability_quasiholder_stochastic}]
    By Lemma \ref{lemma:stochastic_quasiholder_boundedness} each $\LL_\epsilon$ is in $L(\Va(\Omega))$.
    By Propositions \ref{prop:kl1_stochastic_quasiholder}, \ref{prop:kl2_stochastic_quasiholder} and \ref{proposition:kl3_stochastic_quasiholder} the family of operators $\{ \LL_\epsilon \}_{\epsilon \ge 0}$ satisfies (KL). We also note that each $\LL_\epsilon$ is a positive operator, as each $q_\epsilon$ is non-negative. The required result then follows from Theorem \ref{thm:stability_statistical_props_quasiholder}.
  \end{proof}

\section*{Acknowledgements}
HC is supported by an Australian Government Research Training Program Scholarship and the UNSW School of Mathematics and Statistics. GF is partially supported by an Australian Research Council Discovery Project.
Both authors thank Davor Dragi\v{c}evi\'{c} for helpful conversations during the writing of this work, and to an anonymous referee for their suggested strengthening of continuity to H{\"o}lder continuity in Theorems \ref{theorem:convergence_of_eigendata_derivs}, \ref{thm:stability_of_variance} and \ref{thm:convergence_of_rate_functions}.

\appendix

\section{The proofs of Lemmas \ref{cor:conditional_exp_quasi_holder_uniform_bound} and \ref{lemma:bounded_cond_exp_quasiholder} }\label{sec:ulam_quasiholder_boundedness}

  Before discussing our strategy for proving Lemmas \ref{cor:conditional_exp_quasi_holder_uniform_bound} and \ref{lemma:bounded_cond_exp_quasiholder} we must discuss the relationship between the space $\Va(\Omega)$ and the seminorm $\abs{\cdot}_\beta$. It is noted in \cite{keller1985generalized} that while $\Va(\Omega)$ is independent of $\eta_0$, the seminorm $\abs{\cdot}_\beta$ is not. However, changing $\eta_0$ preserves the topology induced by the relevant seminorm, which will be critical to proofs in this appendix. The following lemma gives the relevant bounds.

  \begin{lemma}\label{prop:quasihold_equiv_seminorm}
    For $\zeta > 0$ and $f \in L^1(\R^d)$ let
    \begin{equation*}
      \abs{f}_{\beta,\zeta} = \sup_{0 < \eta \le \zeta} \eta^{-\beta} \intf_{\R^d} \osc{f, B_\eta(x)} dx.
    \end{equation*}
    If $0 < t \le s$ then
    \begin{equation*}
      \abs{\cdot}_{\beta,t} \le \abs{\cdot}_{\beta,s} \le S(t,s) \abs{\cdot}_{\beta,t},
    \end{equation*}
    where $S(t,s)$ denotes the minimal number of balls of radius $t$ required to cover (up to a set of measure 0) a ball of radius $s$.
    \begin{proof}
      The inequality $\abs{\cdot}_{\beta,t} \le \abs{\cdot}_{\beta,s}$ is trivial. Let $f \in L^1(\R^d)$. If
      \begin{equation}\label{eq:quasihold_equiv_seminorm_1}
        \sup_{0 < \eta \le t} \eta^{-\beta} \intf_{\R^d} \osc{f, B_\eta(x)} dx = \sup_{0 < \eta \le s} \eta^{-\beta} \intf_{\R^d} \osc{f, B_\eta(x)} dx,
      \end{equation}
      then, as $S(t,s) \ge 1$, we clearly have $\abs{f}_{\beta,s} \le  S(t,s) \abs{f}_{\beta,t}$. Alternatively, if \eqref{eq:quasihold_equiv_seminorm_1} does not hold then
      \begin{equation*}
        \sup_{0 < \eta \le t} \eta^{-\beta} \intf_{\R^d} \osc{f, B_\eta(x)} dx < \sup_{t < \eta \le s} \eta^{-\beta} \intf_{\R^d} \osc{f, B_\eta(x)} dx.
      \end{equation*}
      By the definition of $S(t,s)$ there exists $\{c_i\}_{i=1}^{S(t,s)} \subseteq \R^d$ and a set $N$ of measure 0 such that
      \begin{equation*}
        B_s(x) \setminus (N + x) \subseteq \bigcup_{i=1}^{S(t,s)} B_t(x+c_i)
      \end{equation*}
      for every $x \in \R^d$. Hence, for any $\eta \in (t, s]$ and $x \in \R^d$ we have
      \begin{equation*}
        \osc{f, B_\eta(x)} \le \osc{f, B_s(x)} \le \sum_{i=1}^{S(t,s)} \osc{f,B_t(x+c_i)}.
      \end{equation*}
      After integrating, taking the supremum and applying the definition of $\abs{\cdot}^t_\beta$ we obtain
      \begin{equation*}\begin{split}
        \sup_{t < \eta \le s} \eta^{-\beta} \intf_{\R^d} \osc{f, B_\eta(x)} dx &\le S(t,s) t^{-\beta} \intf_{\R^d} \osc{f, B_t(x)} dx \\
        &\le S(t,s) \abs{f}_{\beta,t},
      \end{split}\end{equation*}
      completing the proof.
    \end{proof}
  \end{lemma}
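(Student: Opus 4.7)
The first inequality $\abs{\cdot}_{\beta,t} \le \abs{\cdot}_{\beta,s}$ is immediate since the supremum defining $\abs{f}_{\beta,t}$ is taken over $(0,t] \subseteq (0,s]$. The substance of the lemma is therefore the second inequality.

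My plan is to bound, for each $\eta \in (t, s]$, the quantity $\eta^{-\beta}\intf_{\R^d}\osc{f, B_\eta(x)} dx$ by $S(t,s)\abs{f}_{\beta,t}$; combining with the trivial estimate on $(0,t]$ then yields the result. Fix such $\eta$. Since $B_\eta(x) \subseteq B_s(x)$, monotonicity of the essential supremum gives $\osc{f, B_\eta(x)} \le \osc{f, B_s(x)}$. By the definition of $S(t,s)$, I pick centers $c_1,\ldots,c_N$ with $N = S(t,s)$ such that $B_s(0) \subseteq \bigcup_{i=1}^N B_t(c_i)$ modulo a null set; translating, $B_s(x) \subseteq \bigcup_{i=1}^N B_t(x+c_i)$ modulo a null set for every $x \in \R^d$. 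Granting the key covering estimate
\begin{equation*}
  \osc{f, B_s(x)} \le \sum_{i=1}^N \osc{f, B_t(x+c_i)},
\end{equation*}
integration in $x$ together with translation invariance of Lebesgue measure yields
\begin{equation*}
  \intf_{\R^d}\osc{f, B_s(x)} dx \le N \intf_{\R^d}\osc{f, B_t(y)} dy.
\end{equation*}
Multiplying by $\eta^{-\beta}$ and using $\eta^{-\beta} \le t^{-\beta}$ (valid since $\eta \ge t$ and $\beta > 0$) gives $\eta^{-\beta}\intf_{\R^d}\osc{f, B_\eta(x)} dx \le N \abs{f}_{\beta,t}$, as required.

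The main obstacle is the covering estimate $\osc{f, B_s(x)} \le \sum_{i=1}^N \osc{f, B_t(x+c_i)}$, because oscillation is not subadditive on an arbitrary cover (consider two disjoint pieces with distinct constant values of $f$). The resolution uses the specific geometry of the situation: $B_s(x)$ is connected and each covering ball has positive Lebesgue measure, so for almost every pair $(y_1, y_2) \in B_s(x)^2$ one can link $y_1$ and $y_2$ through a chain of covering balls whose consecutive members intersect in sets of positive measure. Iterating the triangle inequality in the essential supremum along such a chain, and noting that any chain involves at most $N$ of the balls, bounds $\abs{f(y_1) - f(y_2)}$ by $\sum_{i=1}^N \osc{f, B_t(x+c_i)}$ almost everywhere, which then yields the essential-supremum bound.
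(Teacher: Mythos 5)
Your proof follows the same basic structure as the paper's: bound $\eta^{-\beta}\intf\osc{f,B_\eta(x)}\,dx$ for $\eta\in(t,s]$ by passing to $\osc{f,B_s(x)}$, applying the $S(t,s)$-ball covering, integrating, and using $\eta^{-\beta}\le t^{-\beta}$. The paper's explicit two-case split (on whether the supremum is already attained on $(0,t]$) is a stylistic wrapper around the same estimate, so there is no substantive difference of route there.

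Where you add genuine value is in flagging that the inequality $\osc{f,B_s(x)}\le\sum_{i=1}^{S(t,s)}\osc{f,B_t(x+c_i)}$ is \emph{not} a consequence of the covering alone — oscillation is not subadditive over an arbitrary cover — a point the paper simply asserts with ``Hence.'' Your proposed chaining argument is the right repair, but as stated it is not a proof. The claim that connectedness of $B_s(x)$ together with positive measure of the covering balls guarantees, for a.e.\ pair $(y_1,y_2)$, a chain of covering balls with pairwise positive-measure overlaps, is false in general: in $d=1$ the intervals $(-2t,0)$ and $(0,2t)$ cover $(-2t,2t)$ up to the null set $\{0\}$ yet are disjoint, and for $f=\mathbf{1}_{(0,2t)}$ one has $\osc{f,(-2t,2t)}=1>0=\osc{f,(-2t,0)}+\osc{f,(0,2t)}$. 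The chaining does work in the paper's standing setting $d\ge2$, but it relies on more than you invoke: one needs to show that when a connected open set is covered mod null by open balls in $\R^d$ with $d\ge2$, the intersection graph of the balls (edge iff the two open balls overlap in a set of positive measure) is connected — this uses the dimension and the strict convexity of balls, not just connectedness and positive measure. So you have correctly located a step the paper elides, but the chain-existence claim still needs to be argued, not merely asserted; you should either prove the connectedness of the intersection graph under the $d\ge2$ hypothesis, or observe that one may take the covering in the definition of $S(t,s)$ to be an exact cover of $B_s(0)$, for which the intersection graph is connected by an elementary open-cover connectedness argument.
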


  We obtain Lemmas \ref{cor:conditional_exp_quasi_holder_uniform_bound} and \ref{lemma:bounded_cond_exp_quasiholder} as corollaries to the following result.

  \begin{proposition}\label{thm:conditional_exp_quasi_holder_bound}
    If $Q \in \mathcal{P}(\kappa)$ satisfies $\diam(Q) < \eta_0$, then
    \begin{equation}\label{thm:conditional_exp_quasi_holder_bound_0}
      \abs{\E_Q}_\beta \le S(\eta_0 - \diam(Q), \eta_0)\left(1 + \frac{\kappa}{\sqrt[d]{\frac{3}{2}} - 1} \right)^\beta.
    \end{equation}
  \end{proposition}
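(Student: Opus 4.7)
The plan is to bound $\abs{\E_Q f}_\beta$ by combining a change-of-scale reduction with a geometric estimate, matching the two factors in the claimed bound. The first step is to invoke Lemma \ref{prop:quasihold_equiv_seminorm} with $s = \eta_0$ and $t = \eta_0 - \diam(Q)$, which is strictly positive by hypothesis. This immediately yields
\begin{equation*}
  \abs{\E_Q f}_\beta = \abs{\E_Q f}_{\beta, \eta_0} \le S(\eta_0 - \diam(Q), \eta_0)\, \abs{\E_Q f}_{\beta,\, \eta_0 - \diam(Q)},
\end{equation*}
extracting the combinatorial factor $S(\eta_0 - \diam(Q), \eta_0)$ and reducing the task to showing
\begin{equation*}
  \abs{\E_Q f}_{\beta,\, \eta_0 - \diam(Q)} \le \left(1 + \frac{\kappa}{\sqrt[d]{3/2} - 1}\right)^{\beta}\, \abs{f}_{\beta, \eta_0}.
\end{equation*}

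The pointwise engine of the remaining estimate is the inequality
\begin{equation*}
  \osc{\E_Q f, B_\eta(x)} \le \osc{f, B_{\eta + \diam(Q)}(x)},
\end{equation*}
valid for all $x$ and $\eta > 0$. Indeed, $\E_Q f \equiv \avg{f}{I}$ on each $I \in Q$, and whenever two partition elements $I, J$ both meet $B_\eta(x)$, each has diameter at most $\diam(Q)$ and hence lies inside $B_{\eta + \diam(Q)}(x)$, so
\begin{equation*}
  \abs{\avg{f}{I} - \avg{f}{J}} \le \frac{1}{m(I)\, m(J)} \iint_{I \times J} \abs{f(u) - f(v)}\, du\, dv \le \osc{f, B_{\eta + \diam(Q)}(x)}.
\end{equation*}
Integrating and applying the definition of $\abs{f}_{\beta, \eta_0}$ then gives the crude estimate
\begin{equation*}
  \eta^{-\beta} \intf_{\R^d} \osc{\E_Q f, B_\eta(x)}\, dx \le \left(1 + \frac{\diam(Q)}{\eta}\right)^\beta \abs{f}_{\beta, \eta_0}
\end{equation*}
for $\eta \in (0, \eta_0 - \diam(Q)]$; its prefactor blows up as $\eta \to 0$, so the estimate must be sharpened at small scales.

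The refinement is where the fatness condition $\diam(Q) \le \kappa \diam(B_I)$ enters, and where the volume-doubling scale $\sqrt[d]{3/2}$ arises. Since every element $I$ contains an inscribed ball of radius at least $\diam(Q)/(2\kappa)$, the number of partition elements meeting $B_\eta(x)$ can be packed inside a ball only slightly larger than $B_\eta(x)$. I would iterate the pointwise bound along the geometric ladder of radii $\eta_k = \eta \cdot (\sqrt[d]{3/2})^{k}$, for which successive enclosing balls grow in volume by the factor $3/2$; the fatness condition limits how many rungs one must climb before the ball contains an entire inscribed ball of some relevant partition element, and the resulting geometric series $\sum_{k \ge 1} (\sqrt[d]{3/2})^{-k} = 1/(\sqrt[d]{3/2} - 1)$, weighted by the inscribed-ball comparison constant $\kappa$, produces the effective enlargement ratio $1 + \kappa/(\sqrt[d]{3/2} - 1)$. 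Raising to the $\beta$-th power at the end yields the target constant.

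The principal obstacle I anticipate is executing this geometric-ladder argument cleanly, uniformly in $x$ and compatibly with Lebesgue integration, so that the series sums collapse to a single expression $1 + \kappa/(\sqrt[d]{3/2} - 1)$ raised to the $\beta$-th power rather than term-by-term within the sum. Tracking the bounds linearly in oscillations and deferring the $\beta$-power to the final step, in the spirit of Lemma \ref{prop:quasihold_equiv_seminorm}, should make this work.
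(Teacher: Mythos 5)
Your first half is correct and matches the paper: you invoke Lemma \ref{prop:quasihold_equiv_seminorm} with $s = \eta_0$, $t = \eta_0 - \diam(Q)$ to extract the factor $S(\eta_0 - \diam(Q), \eta_0)$, and you correctly derive the pointwise bound $\osc{\E_Q f, B_\eta(x)} \le \osc{f, B_{\eta + \diam(Q)}(x)}$ together with the resulting estimate $\eta^{-\beta}\int \osc{\E_Q f, B_\eta(x)}\,dx \le (1 + \diam(Q)/\eta)^\beta \abs{f}_\beta$ (this is Lemma \ref{prop:big_eps_bound}). You also correctly diagnose that this alone fails as $\eta \to 0$ and that the fatness parameter $\kappa$ must enter the small-$\eta$ regime.

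However, your proposed small-$\eta$ refinement is a genuine gap. The ``geometric ladder'' of radii $\eta_k = \eta \cdot (\sqrt[d]{3/2})^k$ is speculative---you concede as much---and it does not reflect the mechanism that actually works. Iterating the pointwise oscillation bound gets you nowhere: applying $\osc{\E_Q f, B_\eta(x)} \le \osc{f, B_{\eta + \diam(Q)}(x)}$ once already removes $\E_Q$ from the picture, and enlarging the radius further only worsens the estimate. In particular, the geometric series $\sum_{k\ge 1}(\sqrt[d]{3/2})^{-k}$ coincidentally equals $1/(\sqrt[d]{3/2}-1)$, but this is not where the constant comes from. What is actually needed at small $\eta$ is a \emph{measure-of-support} observation, not an \emph{oscillation-size} observation: $\osc{\E_Q f, B_\eta(x)}$ vanishes unless $x$ lies within distance $\eta$ of $\partial I$ for some $I \in Q$ (Lemma \ref{lemma:osc_support_boundary}), and the relevant small-$\eta$ coefficient is the boundary-neighbourhood volume ratio $\max_{I\in Q} m(B_\eta(\partial I))/m(I)$ (Lemma \ref{prop:small_eps_osc_bound}), which tends to $0$ as $\eta \to 0$. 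Bounding this ratio uses a convex-geometry inequality $m(B_\eta(\partial I)\cap I) \le m(B_\eta(\partial I)\cap I^c)$ for convex polytopes (Lemma \ref{lemma:convex_inside_outside}), together with a Minkowski-sum volume comparison $m(I + \tfrac{\eta}{2}B_1(0)) \le (1 + \eta/\diam(B_I))^d m(I)$, and this is precisely where the fatness condition $\diam(Q) \le \kappa \diam(B_I)$ enters. The constant $\sqrt[d]{3/2}-1$ then appears not as a geometric series but as the solution of the optimisation problem that balances the big-$\eta$ bound against the small-$\eta$ bound: the minimum of the two coefficients is largest at the crossover $\eta'$ where $2(1 + \kappa\eta'/\diam(Q))^d - 2 = 1$, i.e.\ $\diam(Q)/\eta' = \kappa/(\sqrt[d]{3/2}-1)$, and substituting $\eta'$ into $(1 + \diam(Q)/\eta)^\beta$ yields the final constant. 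Without the boundary-measure argument, your proposal cannot reach the claimed bound.
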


  We prove Proposition \ref{thm:conditional_exp_quasi_holder_bound} by using Lemma \ref{prop:quasihold_equiv_seminorm} to extend a bound for $\sup_{\abs{f}_\beta = 1} \abs{\E_Q f}_{\beta,\eta_0 - \diam(Q)}$ to a bound for $\abs{\E_Q}_\beta$.
  We do this by combining two bounds for
  \begin{equation*}
    \eta^{-\beta} \intf \osc{\E_Q f, B_\eta(x)} dm
  \end{equation*}
  for $\eta \in (0, \eta_0 - \diam(Q)]$:
  \begin{enumerate}
   \item We obtain the `big' $\eta$ bound by scaling the $\eta$ balls in $\funcosc$ up to $\eta+\diam(Q)$ balls. This bound is useful for large $\eta$, but grows unboundedly as $\eta$ vanishes. We obtain this bound in Lemma \ref{prop:big_eps_bound}.
   \item We obtain the `small' $\eta$ bound by using the geometry of the elements of $Q$ to quantify the decay of the measure of the support of $\osc{\E_Q f, B_\eta(\cdot)}$ as $\eta$ vanishes. This bound is used for $\eta$ arbitrarily close to 0. Obtaining this bound is more complicated and is developed in Lemmas \ref{lemma:osc_support_boundary} - \ref{prop:small_eps_osc_bound}.
  \end{enumerate}
  Before proving either of these bounds we derive an explicit expression for $\intf \osc{\E_Q f, B_\eta(x)} dm$.

\begin{lemma}\label{lemma:conditional_exp_osc_sum}
  Let $Q \in \mathcal{P}(\kappa)$, define $Q' = Q \cup \{ \Omega^c\}$, and for each $\eta > 0$ and $x \in \R^d$ let
  \begin{equation*}
    N(x,\eta) = \{ J \in Q' : B_\eta(x) \cap J \ne \emptyset \}.
  \end{equation*}
  For each $\eta > 0$, $f \in \Va(\Omega)$ and $S \subseteq Q' $ let
  \begin{equation*}
    M_S(f) = \max_{J,K \in S} \abs{\avg{f}{J} - \avg{f}{K}} \quad \text{and}  \quad A_{S,\eta} = \{ x \in \R^d : N(x, \eta) = S \}.
  \end{equation*}
  Then each $A_{S,\eta}$ is measurable, and for every $x \in \R^d$
  \begin{equation}\label{eq:conditional_exp_osc_sum_1}
    \osc{\E_Q f,B_\eta(x)} = M_{N(x,\eta)}(f).
  \end{equation}
  Hence,
  \begin{equation}\label{eq:conditional_exp_osc_sum_2}
    \intf_{\R^d} \osc{\E_Q f, B_\eta(x)} dx = \sum_{S \subseteq Q' } m(A_{S,\eta}) M_{S}(f).
  \end{equation}
  \begin{proof}
    For every $J \in Q$ the equality
    \begin{equation*}
      \{x \in \R^d : B_\eta(x) \cap J \ne \emptyset \} = \bigcup_{y \in J} B_\eta(y),
    \end{equation*}
    implies that both sets are open, and therefore measurable. Recalling from Definition \ref{def:regular_admissible_partitions} that $Q$ is finite and noting the equality
    \begin{equation*}\begin{split}
      A_{S,\eta} = &\left( \bigcap_{J \in S} \{x \in \R^d : B_\eta(x)
      \cap J \ne \emptyset \}  \right) \\
      &\bigcap \left( \bigcap_{K \in Q' \setminus S} \{x \in \R^d : B_\eta(x) \cap K = \emptyset \}  \right),
    \end{split}\end{equation*}
    we conclude that each $A_{S,\eta}$ is measurable. Considering the definition of $N(x,\eta)$, we note that the family of sets $\{ A_{S, \eta} : S \subseteq Q'\}$ partitions $\R^d$. Hence,
    \begin{equation*}
      \intf_{\R^d} \osc{\E_Q f, B_\eta(x)} dx = \sum_{S \subseteq Q' } \intf_{A_{S,\eta}} \osc{\E_Q f, B_\eta(x)} dx,
    \end{equation*}
    where we note that the sum on the right-hand side is well defined as only finitely many terms are ever non-zero. Thus, in order to prove \eqref{eq:conditional_exp_osc_sum_2} it suffices to prove \eqref{eq:conditional_exp_osc_sum_1}. If $N(x,\eta) = S$ then
    \begin{equation*}
      (\E_Q f)(B_\eta(x)) = \left\{ \avg{f}{J} : J \cap B_\eta(x) \ne \emptyset \right\} = \left\{ \avg{f}{J} : J \in S \right\},
    \end{equation*}
    which is finite as $Q'$ is finite. By applying the definition of $\funcosc$, we find that
    \begin{equation*}
      \osc{\E_Q f, B_\eta(x)} = \max_{J,K \in S} \abs{\avg{f}{J} - \avg{f}{K}} = M_S(f),
    \end{equation*}
    which is exactly \eqref{eq:conditional_exp_osc_sum_2}.
  \end{proof}
\end{lemma}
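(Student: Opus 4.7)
The plan is to prove this lemma in three steps, with the second step (the pointwise identity \eqref{eq:conditional_exp_osc_sum_1}) carrying the entire geometric content.

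First I would establish the measurability of each $A_{S,\eta}$. The key observation is that the set $\{x \in \R^d : B_\eta(x) \cap J \ne \emptyset\}$ coincides with the Minkowski sum $J + B_\eta(0)$ and hence is open, being a union of translates of the open ball. Since $Q'$ is finite, writing $A_{S,\eta}$ as the intersection of finitely many such open sets (indexed by $J \in S$) with the complements of the corresponding open sets (indexed by $K \in Q' \setminus S$) immediately yields that $A_{S,\eta}$ is Borel measurable.

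Second, I would prove the pointwise identity \eqref{eq:conditional_exp_osc_sum_1}. The function $\E_Q f$ takes the constant value $\avg{f}{J}$ on the interior of each $J \in Q'$ (with the convention $\avg{f}{\Omega^c} = 0$), while the union of the boundaries $\partial J$ has Lebesgue measure zero. Because $\funcosc$ is defined as an essential supremum, only those $J$ for which $B_\eta(x) \cap \intr(J)$ has positive measure can contribute. The key geometric claim is that for every $J \in Q'$, the condition $B_\eta(x) \cap J \ne \emptyset$ already forces $B_\eta(x) \cap \intr(J)$ to be non-empty and therefore to have positive measure: each polytope element of $Q$ is convex with non-empty interior, so its interior is dense in it; the remaining element $\Omega^c$ is itself open; and $B_\eta(x)$ is open, so any shared point admits an open neighbourhood inside $B_\eta(x)$ that intersects $\intr(J)$. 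Consequently, the essential range of $\E_Q f$ on $B_\eta(x)$ is precisely the finite set $\{\avg{f}{J} : J \in N(x,\eta)\}$, and its diameter is exactly $M_{N(x,\eta)}(f)$.

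Third, \eqref{eq:conditional_exp_osc_sum_2} follows by integrating \eqref{eq:conditional_exp_osc_sum_1} over the partition $\{A_{S,\eta}\}_{S \subseteq Q'}$ of $\R^d$: on each $A_{S,\eta}$ the integrand $x \mapsto \osc{\E_Q f, B_\eta(x)}$ is identically $M_S(f)$, and only finitely many of the $A_{S,\eta}$ are non-empty because $Q'$ is finite. The main obstacle is the geometric reduction in the middle paragraph, from ``non-empty intersection with $J$'' to ``positive-measure intersection with $\intr(J)$''; this is where the convex-polytope structure of elements of $Q$ and the openness of $\Omega^c$ are essential, and without which the essential supremum would not collapse to a maximum over $N(x,\eta)$. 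Once this point is isolated, the remaining manipulations are straightforward bookkeeping.
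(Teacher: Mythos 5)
Your proof is correct and follows essentially the same route as the paper: establish measurability of $A_{S,\eta}$ via finite intersections of open sets and their complements, prove the pointwise identity, then integrate over the finite partition $\{A_{S,\eta}\}$. The one place where you go beyond the paper's write-up is in the middle paragraph: the paper simply asserts that the image of $B_\eta(x)$ under $\E_Q f$ is $\{\avg{f}{J}: J\in N(x,\eta)\}$ and then ``applies the definition of $\funcosc$,'' whereas you correctly observe that $\funcosc$ is an \emph{essential} supremum, so one must check that each $J\in N(x,\eta)$ actually meets $B_\eta(x)$ in a set of \emph{positive measure}, and that the boundary values (a null set) cannot contribute. Your argument for this --- using density of $\intr(J)$ in $J$ for the convex polytopes, openness of $\Omega^c$, and openness of $B_\eta(x)$ --- is exactly the right geometric input and fills in a small but genuine informality in the paper's proof. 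No gaps; this is a slightly more careful rendering of the same argument.
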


We may now obtain the `big' $\eta$ bound.

\begin{lemma}\label{prop:big_eps_bound}
  If $Q \in \mathcal{P}(\kappa)$ then for each $f \in \Va(\Omega)$ and $\eta > 0$ we have
  \begin{equation}\label{eq:big_eps_bound_1}
    \intf_{\R^d} \osc{\E_Q f, B_\eta(x)} dx \le \intf_{\R^d} \osc{f, B_{\eta+\diam(Q)}(x)} dx.
  \end{equation}
  Furthermore, if $\diam(Q) < \eta_0$ and $\eta \in (0, \eta_0 - \diam(Q)]$ then
  \begin{equation}\label{eq:big_eps_bound_2}
    \eta^{-\beta} \intf_{\R^d} \osc{\E_Q f, B_\eta(x)} dx \le \left(1 +\frac{\diam(Q)}{\eta}\right)^\beta \abs{f}_\beta.
  \end{equation}
  \begin{proof}
    Fix $x \in \R^d$. By Lemma \ref{lemma:conditional_exp_osc_sum}, we have
    \begin{equation*}
      \osc{\E_Q f, B_\eta(x)} = \max_{J,K \in N(x, \eta)} \abs{\avg{f}{J} -\avg{f}{K}}.
    \end{equation*}
    Suppose that $y \in I$ for some $I \in N(x, \eta) \setminus \{\Omega^c\}$. By the definition of $N(x, \eta)$ there exists $z \in I$ such that $\abs{z - x} < \eta$. Since $\abs{z-y} \le \diam(Q)$ we have $\abs{y-x} < \eta + \diam(Q)$. Hence
    \begin{equation}\label{eq:big_eps_bound_3}
      \bigcup_{I \in N(x,\eta) \setminus \{\Omega^c\}} I \subseteq B_{\eta + \diam(Q)}(x).
    \end{equation}
    Now suppose that $J,K \in N(x,\eta)$. In the case where $J,K \in N(x,\eta)\setminus \{\Omega^c\}$ the inclusion \eqref{eq:big_eps_bound_3} implies that for almost every $(y_1, y_2) \in J \times K$ we have
    \begin{equation*}
      \abs{f(y_1) - f(y_2)} \le \osc{f, B_{\eta + \diam(Q)}(x)}.
    \end{equation*}
    By taking expectations with respect to $y_1$ over $J$ and $y_2$ over $K$ we obtain
    \begin{equation}\label{eq:big_eps_bound_4}
      \abs{\avg{f}{J} - \avg{f}{K}} \le \osc{f, B_{\eta + \diam(Q)}(x)}.
    \end{equation}
    Alternatively, if either $J$ or $K$ is equal to $\Omega^c$ then
    \begin{equation}\begin{split}\label{eq:big_eps_bound_5}
      \abs{\avg{f}{J} - \avg{f}{K}} = \max \left\{\abs{\avg{f}{J}}, \abs{\avg{f}{K}}\right\} &\le \max_{I \in N(x, \eta) \setminus \Omega^c} \abs{\avg{f}{I}} \\
      &\le \esup_{y \in B_{\eta + \diam(Q)}} \abs{f(y)},
    \end{split}\end{equation}
    where we obtain the last inequality by using \eqref{eq:big_eps_bound_3}. Noting that the set $B_{\eta + \diam(Q)}(x) \cap \Omega^c$ has non-zero measure, we have
    \begin{equation}\label{eq:big_eps_bound_6}
      \esup_{y \in B_{\eta + \diam(Q)}(x)} \abs{f(y)} \le \osc{f,B_{\eta + \diam(Q)}(x)}.
    \end{equation}
    By combining \eqref{eq:big_eps_bound_5} and \eqref{eq:big_eps_bound_6} we obtain \eqref{eq:big_eps_bound_4} for the case where either $J$ or $K$ is equal to $\Omega^c$. As $J$ and $K$ were arbitrary elements of $N(x,\eta)$ this implies that
    \begin{equation*}
      \osc{\E_Q  f, B_\eta(x)} = \max_{I,J \in N(x,\eta)} \abs{\avg{f}{I} -\avg{f}{J}} \le \osc{f, B_{\eta + \diam(Q)}(x)}.
    \end{equation*}
    By integrating with respect to $x$ over $\R^d$ we obtain \eqref{eq:big_eps_bound_1}. To prove \eqref{eq:big_eps_bound_2}, suppose that $\diam(Q) < \eta_0$. If $\eta \in (0, \eta_0 -\diam(Q)]$ then $\eta + \diam(Q) \in (0, \eta_0]$ and so the definition of $\abs{\cdot}_\beta$ implies that
    \begin{equation*}
      \intf_{\R^d} \osc{f, B_{\eta+\diam(Q)}(x)} dx \le (\eta + \diam(Q))^{\beta} \abs{f}_\beta.
    \end{equation*}
    Thus
    \begin{equation*}\begin{split}
      \eta^{-\beta} \intf_{\R^d} \osc{\E_Q f, B_{\eta}(x)} dx &\le \eta^{-\beta} \intf_{\R^d} \osc{f, B_{\eta+\diam(Q)}(x)} dx \\
      &\le \left(1 +\frac{\diam(Q)}{\eta}\right)^\beta \abs{f}_\beta.
    \end{split}\end{equation*}
  \end{proof}
\end{lemma}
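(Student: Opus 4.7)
The plan is to establish the first inequality pointwise in $x$ and then integrate; the second will follow immediately from the definition of $\abs{\cdot}_\beta$. The key tool is the identity supplied by Lemma \ref{lemma:conditional_exp_osc_sum}, which reduces $\osc{\E_Q f, B_\eta(x)}$ to $\max_{J,K \in N(x,\eta)} \abs{\avg{f}{J} - \avg{f}{K}}$. Hence the whole problem collapses to producing, for each $J,K \in N(x,\eta)$, the bound $\abs{\avg{f}{J} - \avg{f}{K}} \le \osc{f, B_{\eta+\diam(Q)}(x)}$.

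First I would record the geometric inclusion $\bigcup_{J \in N(x,\eta) \setminus \{\Omega^c\}} J \subseteq B_{\eta+\diam(Q)}(x)$, which holds because any $y \in J$ is within $\diam(Q)$ of some point of $J \cap B_\eta(x)$, hence within $\eta + \diam(Q)$ of $x$. When both $J$ and $K$ lie in $Q$, the desired estimate follows by writing $\avg{f}{J} - \avg{f}{K}$ as the double average of $f(y_1) - f(y_2)$ over $(y_1,y_2) \in J \times K$, moving the absolute value inside, and using that almost every such $(y_1,y_2)$ lies in $B_{\eta+\diam(Q)}(x) \times B_{\eta+\diam(Q)}(x)$, so that $\abs{f(y_1) - f(y_2)} \le \osc{f, B_{\eta+\diam(Q)}(x)}$ essentially everywhere.

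The subtle case, and the principal obstacle, is when one of $J,K$ equals $\Omega^c$: under the convention $\avg{f}{\Omega^c} = 0$ we must instead bound $\abs{\avg{f}{K}}$ for $K \in Q$ by $\osc{f, B_{\eta+\diam(Q)}(x)}$. The crucial observation is that $\Omega^c \in N(x,\eta)$ forces $B_\eta(x) \cap \Omega^c \ne \emptyset$, so $B_{\eta+\diam(Q)}(x) \cap \Omega^c$ has positive Lebesgue measure. Since $\supp(f) \subseteq \Omega$, the function $f$ vanishes on a set of positive measure inside $B_{\eta+\diam(Q)}(x)$, and therefore $\esup_{y \in B_{\eta+\diam(Q)}(x)} \abs{f(y)} \le \osc{f, B_{\eta+\diam(Q)}(x)}$. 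Combining this with $K \subseteq B_{\eta+\diam(Q)}(x)$ gives $\abs{\avg{f}{K}} \le \esup_{y \in K} \abs{f(y)} \le \osc{f, B_{\eta+\diam(Q)}(x)}$. Taking the maximum over $J,K \in N(x,\eta)$ and integrating over $x \in \R^d$ yields the first inequality.

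For the second assertion, suppose $\diam(Q) < \eta_0$ and $\eta \in (0, \eta_0 - \diam(Q)]$, so $\eta + \diam(Q) \in (0, \eta_0]$ and the supremum in the definition of $\abs{\cdot}_\beta$ applies at radius $\eta + \diam(Q)$. This gives $\intf_{\R^d} \osc{f, B_{\eta+\diam(Q)}(x)}\,dx \le (\eta + \diam(Q))^\beta \abs{f}_\beta$; substituting into the first inequality and dividing by $\eta^\beta$ produces the factor $\bigl(1 + \diam(Q)/\eta\bigr)^\beta$ as claimed.
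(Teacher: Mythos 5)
Your proof is correct and follows essentially the same approach as the paper's: the reduction via Lemma~\ref{lemma:conditional_exp_osc_sum}, the geometric inclusion $\bigcup_{J \in N(x,\eta) \setminus \{\Omega^c\}} J \subseteq B_{\eta+\diam(Q)}(x)$, the two-case analysis (both partition elements versus one being $\Omega^c$, using $\avg{f}{\Omega^c}=0$), and then integrating and invoking the definition of $\abs{\cdot}_\beta$. You even make explicit a small step the paper leaves implicit, namely why $B_{\eta+\diam(Q)}(x) \cap \Omega^c$ has positive measure when $\Omega^c \in N(x,\eta)$.
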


We will now pursue the `small' $\eta$ bound.

\begin{lemma} \label{lemma:osc_support_boundary}
  Let $Q \in \mathcal{P}(\kappa)$ and let $S \subseteq Q'$ satisfy $\abs{S} > 1$. If $I \in S \setminus \{ \Omega^c\}$ and $\eta > 0$ then $A_{S, \eta} \subseteq B_\eta(\partial I)$.
  \begin{proof}
    The claim is trivially true if $A_{S,\eta}$ is empty,  henceforth we assume that it is not. Let $x \in A_{S, \eta}$. We distinguish between two cases: either $x \in I$ or $x \notin I$. Suppose that $x \in I$. As $\abs{S} > 1$ and $N(x,\eta) = S$ there exists some $J \in Q' \setminus \{I\}$ such that $B_\eta(x) \cap J \ne \emptyset$.
    Actually, as the closure of the interior of $J$ is $J$, we have $B_\eta(x) \cap \intr(J) \ne \emptyset$. In this case let $y \in B_\eta(x) \cap \intr(J)$; as $J$ and $I$ are convex elements of a measurable partition we have $J \cap I \subseteq \partial J \cap \partial I$ and so $y \notin I$. Alternatively, if $x \notin I$, then let $y \in B_\eta(x) \cap I$, which is non-empty by a similar argument.
    In both cases we have a pair of points in $A_{S, \eta}$: one in $I$ and the other not. Recalling that elements of $Q'$ have non-empty interior and then considering the line segment that joins $x$ and $y$, it is straightforward to verify that there exists some $z \in \partial I$ on this line segment. Clearly $\abs{x-z} < \eta$ and so $x \in B_\eta(\partial I)$, which completes the proof.
  \end{proof}
\end{lemma}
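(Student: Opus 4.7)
The plan is to argue by finding, inside $B_\eta(x)$, both a point that lies in $I$ and a point that lies outside $I$, and then using convexity of the ball to produce a boundary point of $I$ within distance $\eta$ of $x$.

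First I would fix $x \in A_{S,\eta}$, so that $N(x,\eta) = S$ holds. Since $|S| > 1$ and $I \in S$, there exists some other $J \in S \setminus \{I\}$, which by definition of $N(x,\eta)$ satisfies $B_\eta(x) \cap J \neq \emptyset$. I would split into two cases depending on whether $x \in I$ or not. If $x \notin I$, then using $I \in S = N(x,\eta)$ one gets a point $y \in B_\eta(x) \cap I$, and the open segment from $x$ to $y$ must meet $\partial I$. If instead $x \in I$, I would produce a point $y \in B_\eta(x)$ that is \emph{not} in $I$. The subtle point here is that $B_\eta(x) \cap J$ might lie entirely within $\partial I$ (since the elements of $Q$ can share boundaries); to avoid this I would invoke condition (1) of Definition \ref{def:regular_admissible_partitions}, which forces each $J \in Q$ to have non-empty interior, and thus $J = \overline{\intr(J)}$, so $B_\eta(x) \cap \intr(J) \neq \emptyset$. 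Since $I$ and $J$ are distinct elements of a partition and are convex with interiors, we have $\intr(J) \cap I \subseteq \partial I$, so in fact $\intr(J) \cap I = \emptyset$ when suitably interpreted, giving $y \notin I$. (The same reasoning handles $J = \Omega^c$ trivially, since $\Omega^c \cap I = \emptyset$.)

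Having in both cases a pair $\{x, y\} \subset \overline{B_\eta(x)}$ with one point in $I$ and the other not in $I$, I would parametrise the line segment between them and use the intermediate value theorem applied to, say, the indicator-type function measuring membership in the closed convex set $I$, or equivalently just note that a connected set (the segment) meeting both $I$ and its complement must meet $\partial I$. This yields some $z \in \partial I$ on the segment joining $x$ and $y$, and the triangle inequality gives $|x - z| \le |x-y| < \eta$, hence $x \in B_\eta(\partial I)$.

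The main technical obstacle is the case $x \in I$: one must ensure the ``other'' element $J \in S$ genuinely contributes points strictly outside $I$, not merely points on the shared boundary. This is resolved by the convexity and non-empty interior hypotheses built into $\mathcal{P}(\kappa)$, which force partition elements to meet each other only along lower-dimensional boundary pieces. Everything else is an elementary continuity/connectedness argument, so I would expect the proof to be short once this geometric point is made explicit.
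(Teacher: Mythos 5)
Your proposal is correct and follows essentially the same route as the paper's proof: the same case split on $x \in I$ vs.\ $x \notin I$, the same use of non-empty interiors and convexity (via Definition \ref{def:regular_admissible_partitions}) to produce a point $y \in B_\eta(x)$ genuinely outside $I$ when $x \in I$, and the same connectedness/segment argument to extract $z \in \partial I$ with $\abs{x-z} < \eta$. You even flag the same subtle point (avoiding $y$ landing only on a shared boundary) and resolve it the same way, so there is nothing to add.
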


\begin{lemma}\label{prop:small_eps_size_bound}
  Let $Q \in \mathcal{P}(\kappa)$. If $\eta > 0$ and $S \subseteq Q'$ is such that $\abs{S} > 1$ and $m(A_{S,\eta}) > 0$, then for each $f \in \Va(\Omega)$ we have
  \begin{equation*}\begin{split}
    M_S(f) \le \max_{\substack{J,K \in S \\ J\ne K}}\Bigg(&\intf_J \frac{\osc{f, B_{\eta + \diam(Q)}(x)}}{m(J)} dx \\
    &+ \intf_K \frac{ \osc{f, B_{\eta + \diam(Q)}(x)}}{m(K)} dx \Bigg).
  \end{split}\end{equation*}
  \begin{proof}
    Let $J,K \in S$ be partition elements satisfying
    \begin{equation*}
      M_S(f) = \abs{\avg{f}{J} - \avg{f}{K}}.
    \end{equation*}
    We may assume that $J \ne K$, as this case does not contribute to the maximum. Let us first consider the case where $\Omega^c \in \{J,K\}$; without loss of generality let $K = \Omega^c$. For every $j \in J$ we have $J \subseteq B_{\eta + \diam(Q)}(j)$.
    Hence, as $B_{\eta + \diam(Q)}(j) \cap \Omega^c$ has non-empty interior, and therefore non-zero measure, for almost every $j, j' \in J$ and $k' \in B_{\eta + \diam(Q)}(j) \cap \Omega^c$ we have
    \begin{equation*}
      \abs{f(j')} = \abs{f(j') - f(k')} \le \osc{f,B_{\eta + \diam(Q)}(j)}.
    \end{equation*}
    Taking expectations with respect to $j'$ and $j$ over $J$ yields
    \begin{equation*}
      M_S(f) = \abs{\avg{f}{J}} \le \intf_J \frac{\osc{f,B_{\eta + \diam(Q)}(x)}}{m(J)} dx,
    \end{equation*}
    which implies the required conclusion.
    Alternatively suppose that neither $J$ nor $K$ is equal to $\Omega^c$. Fix $j \in J$ and $k \in K$. For any $j' \in J$ we have $\abs{j - j'} \le \diam(Q)$ and so $j' \in B_{\eta + \diam(Q)}(j)$. Similarly, for every $k' \in K$ we have $k' \in B_{\eta + \diam(Q)}(k)$. As $m(A_{S,\eta}) > 0$, we know that $A_{S,\eta} \ne \emptyset$. For $z \in A_{S,\eta}$ the intersection $B_\eta(z) \cap J$ is non-empty and so $z \in B_{\eta + \diam(Q)}(j)$. Similarly, $z \in B_{\eta + \diam(Q)}(k)$. Hence, for almost every $j' \in J$ and $k' \in K$,
    \begin{equation*}\begin{split}
      \abs{f(j') - f(k')} &\le \abs{f(j') - f(z)} + \abs{f(k') - f(z)}\\
      &\le \osc{f, B_{\eta + \diam(Q)}(j)} + \osc{f, B_{\eta + \diam(Q)}(k)}.
    \end{split}\end{equation*}
    By taking the expectation with respect to $j'$ over $J$ and $k'$ over $K$, we find
    \begin{equation}\label{eq:small_eps_size_bound_1}
      \abs{\avg{f}{J} - \avg{f}{K}} \le \osc{f, B_{\eta + \diam(Q)}(j)} + \osc{f, B_{\eta + \diam(Q)}(k)}.
    \end{equation}
    Since \eqref{eq:small_eps_size_bound_1} holds for every $j \in J$ and $k \in K$, we may take expectations again to obtain
    \begin{equation*}
      \abs{\avg{f}{J} - \avg{f}{K}} \le  \intf_J \frac{\osc{f, B_{\eta + \diam(Q)}(x)}}{m(J)}  dx + \intf_K \frac{\osc{f, B_{\eta + \diam(Q)}(x)}}{m(K)}  dx.
    \end{equation*}
    We obtain the required inequality by taking the maximum over all distinct pairs of $J,K \in S$.
  \end{proof}
\end{lemma}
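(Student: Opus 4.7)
My plan is to fix a single pair $J, K \in S$ attaining the maximum $M_S(f) = |\avg{f}{J} - \avg{f}{K}|$ and establish the integral bound directly for this pair, since the right-hand side of the lemma is then obtained by maximising over all distinct pairs. We may assume $J \ne K$ (otherwise the bound is trivial), and the argument naturally splits into two cases depending on whether $\Omega^c \in \{J, K\}$.

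In the generic case where neither $J$ nor $K$ equals $\Omega^c$, the hypothesis $m(A_{S,\eta}) > 0$ is exactly what lets me pick a linking point $z \in A_{S,\eta}$; by the definition of $N(z,\eta) = S$ it lies within distance $\eta$ of both $J$ and $K$. Since every element of $Q$ has diameter at most $\diam(Q)$, for any $j \in J$ and $k \in K$ we then have $J \cup \{z\} \subseteq B_{\eta+\diam(Q)}(j)$ and $K \cup \{z\} \subseteq B_{\eta+\diam(Q)}(k)$. The triangle inequality therefore gives, for almost all $(j',k') \in J \times K$,
\[
|f(j') - f(k')| \le |f(j') - f(z)| + |f(z) - f(k')| \le \osc{f, B_{\eta + \diam(Q)}(j)} + \osc{f, B_{\eta + \diam(Q)}(k)}.
\]
Averaging $(j', k')$ over $J \times K$ converts the left-hand side into $|\avg{f}{J} - \avg{f}{K}|$, and a further average of the right-hand side in $(j, k)$ over $J \times K$ produces exactly the two-integral expression claimed for this pair.

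In the boundary case where, say, $K = \Omega^c$, the convention $\avg{f}{\Omega^c} = 0$ together with $\supp f \subseteq \Omega$ gives $M_S(f) = |\avg{f}{J}|$, and for any $j \in J$ the ball $B_{\eta+\diam(Q)}(j)$ contains $J$ and meets $\Omega^c$ in a positive-measure set on which $f$ vanishes. Choosing such an $\Omega^c$-witness in place of $z$ and repeating the triangle-inequality / averaging recipe yields $|\avg{f}{J}| \le \frac{1}{m(J)} \int_J \osc{f, B_{\eta+\diam(Q)}(x)} \, dx$, and the non-negative $K$-integral on the right of the lemma only enlarges the bound. The step I expect to think about most carefully is pinning down exactly why $m(A_{S,\eta}) > 0$ is the right hypothesis in the generic case: without a common point $z$ witnessing that $B_\eta(\cdot)$ simultaneously meets both $J$ and $K$, the averages $\avg{f}{J}$ and $\avg{f}{K}$ cannot be linked at scale $\eta + \diam(Q)$. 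Everything else is routine manipulation of iterated expectations together with the trivial inclusion $E \subseteq B_{\diam(E)}(y)$ for any $y \in E$.
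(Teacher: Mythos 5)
Your proposal is correct and follows essentially the same route as the paper's proof: fix an extremal pair, treat the $\Omega^c$ and non-$\Omega^c$ cases separately, use a point $z \in A_{S,\eta}$ (or an $\Omega^c$-witness) as the linking point inside $B_{\eta+\diam(Q)}(j)$ and $B_{\eta+\diam(Q)}(k)$, apply the triangle inequality and oscillation bound, and then average iteratively over $J \times K$. The role you identify for the hypothesis $m(A_{S,\eta}) > 0$ is precisely the one it plays in the paper.
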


Combining the previous two results yields the `small' $\eta$ bound of Lemma \ref{prop:small_eps_osc_bound}.

\begin{lemma}\label{prop:small_eps_osc_bound}
  Let $Q \in \mathcal{P}(\kappa)$. If $\diam(Q) < \eta_0$, $\eta \in (0, \eta_0 - \diam(Q)]$ and $f \in \Va(\Omega)$ then
  \begin{equation*}
    \eta^{-\beta} \intf_{\R^d} \osc{\E_Q f, B_\eta(x)} dx \le \left(\max_{I \in Q} \frac{m(B_\eta(\partial I))}{m(I)} \right) \left(1 + \frac{\diam(Q)}{\eta}\right)^\beta \abs{f}_\beta.
  \end{equation*}
  \begin{proof}
    By Lemma \ref{lemma:conditional_exp_osc_sum} we have
    \begin{equation}\label{eq:small_eps_osc_bound_1}
      \intf_{\R^d} \osc{\E_Q f, B_\eta(x)} dx = \sum_{S \subseteq Q' } m(A_{S,\eta}) M_{S}(f).
    \end{equation}
    Let $G = \{ S \subseteq Q' : \abs{S} > 1, m(A_{S,\eta}) > 0 \}$. Since $m(A_{S,\eta}) M_{S}(f) = 0$ if $S \notin G$ we may restrict the sum in \eqref{eq:small_eps_osc_bound_1}:
    \begin{equation}\label{eq:small_eps_osc_bound_2}
      \intf_{\R^d} \osc{\E_Q f, B_\eta(x)} dx = \sum_{S \in G} m(A_{S,\eta}) M_{S}(f).
    \end{equation}
    Applying Lemma \ref{prop:small_eps_size_bound} to each of the terms in \eqref{eq:small_eps_osc_bound_2} yields
    \begin{equation}\begin{split}\label{eq:small_eps_osc_bound_3}
      &\intf_{\R^d} \osc{\E_Q f, B_\eta(x)} dx \\
      &\begin{split}
        \le \sum_{S \in G} m(A_{S,\eta}) \max_{J,K \in S, J \ne K} \biggl( &\intf_J \frac{\osc{f, B_{\eta + \diam(Q)}(x)}}{m(J)}  dx  \\
        &+  \intf_K \frac{\osc{f, B_{\eta + \diam(Q)}(x)}}{m(K)}  dx \biggr).\end{split}
    \end{split}\end{equation}
    By rearranging the terms in \eqref{eq:small_eps_osc_bound_3} to sum over elements of $Q$ we obtain
    \begin{equation}\begin{split}\label{eq:small_eps_osc_bound_4}
      &\int_{\R^d}\osc{\E_Q f, B_\eta(x)} \mathrm{d}x \\
      &\le \sum_{I \in Q} \frac{\sum_{S \in G, I \in S} m(A_{S,\eta})}{m(I)} \intf_I \osc{f, B_{\eta + \diam(Q)}(x)} dx \\
      &\le \left(\max_{I \in Q} \frac{\sum_{ S\in G, I \in S} m(A_{S,\eta})}{m(I)} \right) \intf_{\R^d} \osc{f, B_{\eta + \diam(Q)}(x)} dx,
    \end{split}\end{equation}
    where we omit the case of $I = \Omega^c$, as it does not contribute to the sum. Since the sets $\{A_{S,\eta}\}_{S \subseteq Q'}$ are disjoint, Lemma \ref{lemma:osc_support_boundary} implies that
    \begin{equation*}
      \sum_{ S \in G, I \in S } m(A_{S,\eta}) \le m(B_\eta(\partial I)).
    \end{equation*}
    Thus,
    \begin{equation*}\begin{split}
      \int_{\R^d} &\osc{\E_Q f, B_\eta(x)} \mathrm{d}x \\
      &\le \left(\max_{I \in Q} \frac{m(B_\eta(\partial I))}{m(I)} \right) \intf_{\R^d} \osc{f, B_{\eta + \diam(Q)}(x)} dx.
    \end{split}\end{equation*}
     The required inequality follows by applying the definition of $\abs{\cdot}_\beta$.
  \end{proof}
\end{lemma}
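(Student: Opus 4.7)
The plan is to start from the exact expansion
$$\intf_{\R^d}\osc{\E_Q f,B_\eta(x)}\,dx=\sum_{S\subseteq Q'}m(A_{S,\eta})M_S(f)$$
supplied by Lemma~\ref{lemma:conditional_exp_osc_sum}, and then estimate the right-hand side term by term. Since $M_S(f)=0$ whenever $|S|=1$ and the whole summand vanishes whenever $m(A_{S,\eta})=0$, I would immediately restrict the sum to the collection $G=\{S\subseteq Q':|S|>1,\ m(A_{S,\eta})>0\}$, which is exactly the regime in which the geometric Lemmas~\ref{lemma:osc_support_boundary} and~\ref{prop:small_eps_size_bound} are applicable.

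For each $S\in G$, Lemma~\ref{prop:small_eps_size_bound} bounds $M_S(f)$ by the maximum, over distinct pairs $J,K\in S$, of the sum of two averaged oscillations of $f$ at the enlarged radius $\eta+\diam(Q)$. Because each such averaged oscillation is non-negative, this maximum is dominated by the full sum, giving
$$M_S(f)\le\sum_{I\in S\setminus\{\Omega^c\}}\frac{1}{m(I)}\intf_I\osc{f,B_{\eta+\diam(Q)}(x)}\,dx.$$
At this point I would swap the order of summation, passing from an outer sum over $S\in G$ to a sum over partition elements $I\in Q$. The resulting coefficient of $\frac{1}{m(I)}\intf_I\osc{f,B_{\eta+\diam(Q)}(x)}\,dx$ becomes $\sum_{S\in G,\,I\in S}m(A_{S,\eta})$. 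By Lemma~\ref{lemma:osc_support_boundary} each $A_{S,\eta}$ with $I\in S$ lies inside $B_\eta(\partial I)$, and since the family $\{A_{S,\eta}\}_{S\subseteq Q'}$ is pairwise disjoint, this weight is bounded by $m(B_\eta(\partial I))$.

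Pulling out the worst ratio $\max_{I\in Q}m(B_\eta(\partial I))/m(I)$ and then enlarging the domain of integration back to $\R^d$ produces
$$\intf_{\R^d}\osc{\E_Q f,B_\eta(x)}\,dx\le\left(\max_{I\in Q}\frac{m(B_\eta(\partial I))}{m(I)}\right)\intf_{\R^d}\osc{f,B_{\eta+\diam(Q)}(x)}\,dx.$$
Because $\eta+\diam(Q)\le\eta_0$ by hypothesis, the defining inequality for $\abs{\cdot}_\beta$ applies to the radius $\eta+\diam(Q)$ and bounds the remaining integral by $(\eta+\diam(Q))^\beta\abs{f}_\beta$; dividing by $\eta^\beta$ then yields the claimed prefactor $(1+\diam(Q)/\eta)^\beta$. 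I do not anticipate a genuine obstacle here: the only points needing care are (i) the book-keeping when $\Omega^c\in S$, handled automatically by the convention $m(\Omega^c)=\infty$ (so its $1/m(I)$ weight is zero), and (ii) the maximum-to-sum step, which relies only on non-negativity of oscillations.
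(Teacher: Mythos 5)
Your proposal is correct and takes essentially the same route as the paper: expand via Lemma \ref{lemma:conditional_exp_osc_sum}, restrict to $G$, bound $M_S(f)$ via Lemma \ref{prop:small_eps_size_bound}, regroup by partition element, bound the weight $\sum_{S\in G,\,I\in S}m(A_{S,\eta})$ via Lemma \ref{lemma:osc_support_boundary} and disjointness, and invoke the definition of $\abs{\cdot}_\beta$ (valid since $\eta+\diam(Q)\le\eta_0$). The only cosmetic difference is that you first relax the maximum over distinct pairs to the full sum over $S\setminus\{\Omega^c\}$ before regrouping, whereas the paper rearranges the max directly; both deliver the same per-element weight and hence the same final bound.
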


Before proving Proposition \ref{thm:conditional_exp_quasi_holder_bound} we require a technical lemma for an inequality from convex geometry. For $U,V \subseteq \R^d$ the Minkowski sum of $U$ and $V$ is denoted by $U + V$ and equal to $\{u + v : u \in U, v \in V\}$; for basic properties we refer to \cite[Section 6.1]{gruber2010convex}.

\begin{lemma}\label{lemma:convex_inside_outside}
  If $I$ is a compact convex polytope then for every $\eta > 0$ we have
  \begin{equation*}
    m(B_\eta(\partial I) \cap I) \le m(B_\eta(\partial I)  \cap I^c).
  \end{equation*}
  \begin{proof}
    Let $m_{d-1}$ denote $d-1$ dimensional Lebesgue measure. By Steiner's formula \cite[Theorem 6.6]{gruber2010convex} there exists a polynomial $p_I$ with positive coefficients and of degree $d$ such that $m(B_\eta(I)) = p_I(\eta)$. The constant coefficient of $p_I$ is clearly $m(I)$, while the coefficient of the linear term is $m_{d-1}(\partial I)$ i.e. the surface area of $I$. Note that $m(B_\eta(\partial I)  \cap I^c) = p_I(\eta) - m(I)$. We will prove that $m(B_\eta(\partial I) \cap I) \le \eta m_{d-1}(\partial I)$. Since $p_I$ has degree greater than or equal to $2$ and positive coefficients, it follows that
    \begin{equation*}
      m(B_\eta(\partial I) \cap I) \le \eta m_{d-1}(\partial I) \le p_I(\eta) - m(I) \le m(B_\eta(\partial I)  \cap I^c),
    \end{equation*}
    and would therefore complete the proof.

    Let $\mathcal{F}(I)$ denote the set of set of facets of $I$. Clearly $m_{d-1}(\partial I) = \sum_{F \in \mathcal{F}(I)} m_{d-1}(F)$. Let $y \in B_\eta(\partial I) \cap I$ and denote by $F$ the (possibly not unique) facet in $\mathcal{F}(I)$ that minimises the distance from $y$ to $\partial I$. Let $x$ be the point on $F$ attaining said minimum.
    If $x-y$ is not normal to $F$ then the ball $B_{\abs{x-y}}(y)$ is not tangent to $F$ and so there exists $z \in B_{\abs{x-y}}(y) \cap I^c$. The line segment from $y$ to $z$ must intersect $\partial I$ at some point that is strictly closer to $y$ than $x$, which contradicts $x$ minimising the distance from $y$ to $\partial I$. Hence, $x-y$ must be normal to $F$ and so $y \in F + [0,\eta] \times n_F$, where $n_F$ is the inward facing unit normal vector to $F$. This implies that
    \begin{equation*}
      B_\eta(\partial I) \cap I \subseteq \bigcap_{F \in \mathcal{F}(I)} F + [0,\eta] \times n_F
    \end{equation*}
    and so $m(B_\eta(\partial I) \cap I) \le \eta \sum_{F \in \mathcal{F}(I)} m_{d-1}(F) = \eta m_{d-1}(\partial I)$ as required.
  \end{proof}
\end{lemma}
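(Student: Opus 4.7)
The plan is to sandwich both sides of the inequality against the intermediate quantity $\eta \cdot m_{d-1}(\partial I)$, where $m_{d-1}$ denotes $(d-1)$-dimensional Hausdorff measure. For the outer side I would invoke Steiner's formula for convex bodies (\cite[Theorem 6.6]{gruber2010convex}), which expresses $m(B_\eta(I))$ as a polynomial $p_I(\eta)$ of degree $d$ with non-negative coefficients, constant term $m(I)$ and linear-in-$\eta$ coefficient $m_{d-1}(\partial I)$. Since $B_\eta(\partial I) \cap I^c = B_\eta(I) \setminus I$, we have $m(B_\eta(\partial I) \cap I^c) = p_I(\eta) - m(I)$, and dropping the higher-order non-negative terms gives the lower bound $m(B_\eta(\partial I) \cap I^c) \ge \eta \cdot m_{d-1}(\partial I)$. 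It then suffices to show the matching upper bound $m(B_\eta(\partial I) \cap I) \le \eta \cdot m_{d-1}(\partial I)$, which I treat geometrically.

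For the inner bound I would exploit the polytopal structure directly. Let $\mathcal{F}(I)$ denote the set of facets of $I$, and for each $F \in \mathcal{F}(I)$ let $n_F$ be the inward unit normal. For almost every $y \in B_\eta(\partial I) \cap I$, the nearest point $x \in \partial I$ lies in the relative interior of a single facet $F$; convexity of $I$ then forces $y - x$ to be parallel to $n_F$, since otherwise an open ball of radius $|y-x|$ around $y$ would puncture $\partial I$ transversally near $x$ and contradict the minimality of $|y-x|$. Hence such a $y$ lies in the Minkowski sum $F + [0,\eta] \cdot n_F$, giving the inclusion $B_\eta(\partial I) \cap I \subseteq \bigcup_{F \in \mathcal{F}(I)} (F + [0,\eta] \cdot n_F)$ up to a Lebesgue-null set. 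Summing volumes yields $m(B_\eta(\partial I) \cap I) \le \eta \sum_{F \in \mathcal{F}(I)} m_{d-1}(F) = \eta \cdot m_{d-1}(\partial I)$, which, combined with the outer bound, proves the lemma.

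The main obstacle I anticipate is handling the measure-zero exceptional set where the nearest point $x \in \partial I$ lies on a lower-dimensional face (a ridge, edge, or vertex) rather than in the relative interior of a single facet; at such points the direction of $y - x$ is only constrained to lie in the normal cone, which is of positive codimension, and the clean prism picture breaks down. However, since $I$ has only finitely many lower-dimensional faces and the metric projection onto any fixed one contributes a set of Lebesgue measure zero in $\R^d$, this exceptional set may simply be discarded before integrating, so no real analytical difficulty arises beyond bookkeeping.
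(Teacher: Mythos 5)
Your proposal is correct and follows essentially the same approach as the paper: both use Steiner's formula to lower-bound $m(B_\eta(\partial I)\cap I^c)$ by $\eta\,m_{d-1}(\partial I)$, and both establish the matching upper bound $m(B_\eta(\partial I)\cap I)\le \eta\,m_{d-1}(\partial I)$ by showing that each interior point $y$ whose nearest boundary point is $x$ on a facet $F$ satisfies $y-x\parallel n_F$, hence lies in the prism $F + [0,\eta]\,n_F$. Your extra remark about the exceptional set is a harmless precaution, though in fact it is empty for $y\in\intr(I)$: if the nearest point $x$ lay in a lower-dimensional face, the inscribed ball $B_{|y-x|}(y)\subseteq I$ would need to be tangent at $x$ to two distinct facet hyperplanes, which is impossible, so only the null set $\partial I$ needs to be discarded.
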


\begin{proof}[The proof of Proposition \ref{thm:conditional_exp_quasi_holder_bound}]
  We begin by bounding
  \begin{equation*}
    \sup_{\abs{f}_\beta = 1} \abs{\E_Q f}_{\beta,\eta_0 - \diam(Q)}.
  \end{equation*}
  Let $b: \R \to \R$ be defined by
  \begin{equation*}
    b(\eta) = \left(1 + \frac{\diam(Q)}{\eta}\right)^\beta.
  \end{equation*}
  By taking the minimum of the bounds in Lemmas \ref{prop:big_eps_bound} and \ref{prop:small_eps_osc_bound} we have
  \begin{equation}\label{eq:conditional_exp_quasi_holder_bound_1}
    \abs{\E_Q f}_{\beta,\eta_0 - \diam(Q)}
    \le \sup_{0 < \eta \le \eta_0} \min\left\{\max_{I \in Q} \frac{m(B_\eta(\partial I))}{m(I)}, 1\right\} b(\eta)  \abs{f}_\beta.
  \end{equation}
  We will now bound $\max_{I \in Q} \frac{m(B_\eta(\partial I))}{m(I)}$. Lemma \ref{lemma:convex_inside_outside} implies that for any $I \in Q$ we have
  \begin{equation*}
    \frac{m(B_{\eta}(\partial I))}{m(I)}\le \frac{2m(B_{\eta}(\partial I) \cap I^c)}{m(I)}.
  \end{equation*}
  Noting that $B_{\eta}(\partial I) \cap I^c = B_{\eta}(I) \setminus I$ and $B_\eta(I) = I + (\eta/2)B_{1}(0)$, we obtain
  \begin{equation}\label{eq:conditional_exp_quasi_holder_bound_2}
    \frac{m(B_{\eta}(\partial I))}{m(I)} \le 2 \frac{m(I + (\eta/2)B_{1}(0)) -m(I)}{m(I)}.
  \end{equation}
  Let $B_I$ be a ball inscribed in $I$ of maximal volume. Then, by scaling and possibly translating by some vector $v_I \in \R^d$, we find that $B_{1}(0) \subseteq \frac{2}{\diam(B_I)} I + v_I$. Consequently
  \begin{equation}\label{eq:conditional_exp_quasi_holder_bound_3}
    m\left(I + \frac{\eta}{2}B_{1}(0)\right) \le m\left(I + \frac{\eta}{\diam(B_I)}I\right) = \left( 1 + \frac{\eta}{\diam(B_I)} \right)^d m(I).
  \end{equation}
  Applying \eqref{eq:conditional_exp_quasi_holder_bound_3} to \eqref{eq:conditional_exp_quasi_holder_bound_2}, and recalling that $1/ \diam(B_I) \le \kappa/\diam(Q)$ as $Q \in \mathcal{P}(\kappa)$, we find that
  \begin{equation}\label{eq:conditional_exp_quasi_holder_bound_4}
    \frac{m(B_{\eta}(\partial I))}{m(I)} \le 2 \left( 1 + \frac{\eta}{\diam(B_I)} \right)^d - 2 \le 2 \left( 1 + \frac{\kappa\eta}{\diam(Q)} \right)^d - 2.
  \end{equation}
  By applying \eqref{eq:conditional_exp_quasi_holder_bound_4} to \eqref{eq:conditional_exp_quasi_holder_bound_1} we obtain
  \begin{equation}\begin{split}\label{eq:conditional_exp_quasi_holder_bound_5}
    &\lvert \E_Q f \rvert_{\beta,\eta_0 - \diam(Q)} \\
    &\le \sup_{0 < \eta \le \eta_0} \min\left\{\left(2 \left( 1 + \frac{\kappa\eta}{\diam(Q)} \right)^d - 2\right) b(\eta), b(\eta)\right\} \abs{f}_\beta.
  \end{split}\end{equation}
  It is clear that $b$ is monotonically decreasing.
  Note that
  \begin{equation}\label{eq:conditional_exp_quasi_holder_bound_6}
    \begin{split}
    2 \Bigg(\Bigg( 1 &+ \frac{\kappa\eta}{\diam(Q)} \Bigg)^d  - 1\Bigg)b(\eta) \\
    &= 2 \eta^{-\beta}\left(\left( 1 + \frac{\kappa\eta}{\diam(Q)} \right)^d - 1\right)(\eta + \diam(Q))^\beta.
  \end{split}
  \end{equation}
  The map $\eta \mapsto (\eta + \diam(Q))^\beta$ is clearly monotonically increasing on $(0, \eta_0]$.
  As $d \ge 2$ and $\beta \in (0, 1]$, the map
  \begin{equation*}
    \eta \mapsto \eta^{-\beta} \left(\left( 1 + \frac{\kappa\eta}{\diam(Q)} \right)^d - 1\right)
  \end{equation*}
  is monotonically increasing on $(0, \eta_0]$ too. Thus the left-hand side of \eqref{eq:conditional_exp_quasi_holder_bound_6} is monotonically increasing.
  Since both $b$ and the left-hand side of \eqref{eq:conditional_exp_quasi_holder_bound_6} are continuous on $(0, \eta_0]$, $b$ is monotonically decreasing and the left-hand side of \eqref{eq:conditional_exp_quasi_holder_bound_6} is monotonically increasing, it follows that if $\eta' \in (0, \infty)$ solves
  \begin{equation}\label{eq:conditional_exp_quasi_holder_bound_7}
    2 \left( 1 + \frac{\kappa\eta'}{\diam(Q)} \right)^d - 2 = 1,
  \end{equation}
  then
  \begin{equation*}
    \sup_{0 < \eta \le \eta_0} \min\left\{\left(2 \left( 1 + \frac{\kappa\eta}{\diam(Q)} \right)^d - 2\right) b(\eta), b(\eta)\right\} \le b(\eta').
  \end{equation*}
  Solving \eqref{eq:conditional_exp_quasi_holder_bound_7} yields
  \begin{equation*}
    \frac{\diam(Q)}{\eta'} = \frac{\kappa}{\sqrt[d]{\frac{3}{2}} - 1}.
  \end{equation*}
  By substituting this into \eqref{eq:conditional_exp_quasi_holder_bound_5} we obtain the bound
  \begin{equation*}
    \abs{\E_Q f}_{\beta,\eta_0 - \diam(Q)} \le \left(1 + \frac{\kappa}{\sqrt[d]{\frac{3}{2}} - 1} \right)^\beta \abs{f}_\beta.
  \end{equation*}
  Applying Lemma \ref{prop:quasihold_equiv_seminorm} yields the required bound.
\end{proof}

With Proposition \ref{thm:conditional_exp_quasi_holder_bound} in hand we may now prove Lemmas \ref{cor:conditional_exp_quasi_holder_uniform_bound} and \ref{lemma:bounded_cond_exp_quasiholder}.

\begin{proof}[Proof of Lemma \ref{cor:conditional_exp_quasi_holder_uniform_bound}]
  As $\lim_{\epsilon \to 0} \diam(Q_\epsilon) = 0$ there exists $\epsilon_2 >0$ such that for every $\epsilon \in (0, \epsilon_2]$ we have $\diam(Q_\epsilon) < \eta_0$ and
  \begin{equation*}
    1 + \diam(Q_\epsilon)/(\eta_0 - \diam(Q_\epsilon)) < \sqrt{d/(d-1)}.
  \end{equation*}
  By \cite[Section 8.5, page 236]{boroczky2004finite}, this implies
  \begin{equation*}
    S(1, 1 + \diam(Q_\epsilon)/(\eta_0 - \diam(Q_\epsilon))) = S(\eta_0 - \diam(Q_\epsilon), \eta_0) \le 2d.
  \end{equation*}
  The desired conclusion follows by Proposition \ref{thm:conditional_exp_quasi_holder_bound}.
\end{proof}

\begin{proof}[Proof of Lemma \ref{lemma:bounded_cond_exp_quasiholder}]
  If $\diam(Q) < \eta_0$ then $\abs{\E_Q}_\beta < \infty$ by Proposition \ref{thm:conditional_exp_quasi_holder_bound}. Alternatively, if $\diam(Q) \ge \eta_0$, then repeatedly applying Lemma \ref{prop:quasihold_equiv_seminorm} yields
  \begin{equation*}\begin{split}
    \abs{\E_Q}_\beta &= \sup_{\abs{f}_\beta \le 1 } \abs{\E_Q f}_\beta \\
    &\le \sup \{ \abs{\E_Q f}_{\beta,2 \diam(Q)} : \abs{f}_{\beta,2\diam(Q)} \le S(\eta_0, 2\diam(Q)) \}\\
    &\le S(\eta_0, 2\diam(Q)) \abs{\E_Q}_{\beta,2 \diam(Q)},
  \end{split}\end{equation*}
  which is finite by Proposition \ref{thm:conditional_exp_quasi_holder_bound} applied to the seminorm $\abs{\cdot}_{\beta,2 \diam(Q)}$ (i.e. when $\eta_0 = 2 \diam(Q)$).
  In either case we have $\abs{\E_Q}_\beta < \infty$ and so, as $\lnorm{\E_Q } =1$, we have $\norm{\E_Q}_\beta < \infty$ too. As $Q$ partitions $\Omega$, for every $f \in \Va(\Omega)$ the support of $\E_Q f$ is a subset of $\Omega$. Hence $\E_Q f \in \Va(\Omega)$ for every $f \in \Va(\Omega)$ and so $\E_Q  \in L(\Va(\Omega))$.
\end{proof}

\section{Code}
\label{app:c}

\subsection{Variance}

\begin{lstlisting}[caption = {This function centres the observable \textit{obs} (defined by an anonymous MATLAB function e.g.\ the code snippet in Section \ref{sec:estimating_variance_1d}) and computes the required first and second derivatives at zero to estimate the variance.}]
function [ddLam,v,dv,dlam,ddlam]=variance(P,obs),

%P is a row-stochastic matrix
%obs is a pre-defined anonymous function representing the observable
%lam is the leading eigenvalue
%v is the leading eigenfunction
%dv is dv/dtheta, where theta is the twist parameter
%dlam is dlam/dtheta
%ddlam is d^2lam/dtheta^2
%ddLam is d^2Lam/dtheta^2

%% find v and normalise appropriately
n=size(P,1);
phi=ones(n,1)/n;
[v,~]=eigs(P',1);
v=v/sum(v)*n;

%% centre observable g
x=[1/(2*n):1/(n):1-1/(2*n)]';
g=obs(x);
g=g-g'*v/n;  %ensure g has mean zero by subtracting the mean

%% estimate dlam and dv using 1.*v=1 and 1.*dv=0

A=[P'-speye(n)  -v;  ones(1,n) 0];
b=[-P'*(g.*v);  0];
y=A\b;
dv=y(1:n);
dlam=y(n+1);

%% compute d^2lam/dtheta^2 and d^2Lam/dtheta^2

ddlam=((g.^2)'*v+2*g'*dv)/n;
ddLam=(ddlam-dlam^2);
\end{lstlisting}

\subsection{Rate function}

\begin{lstlisting}[caption = {This function centres the observable \textit{obs} (defined by an anonymous MATLAB function e.g.\ the code snippet in Section \ref{sec:est_rate_func}), and performs the required minimisation to evaluate the rate function at points specified in the vector $s$.}]
function [r,optz] = rate_function(s,P,obs)

%P is a row-stochastic matrix
%obs is a pre-defined anonymous function representing the observable
%s is a vector of arguments of the rate function

%% calc acim for centering observable.
[v0,~]=eigs(P',1);
v0=v0/sum(v0);

%% set up objects to pass to legendre_function.m
n=length(P);
[I,J,V]=find(P);
xpts=(I-.5)/n;
xptsorig=1/(2*n):1/n:1-1/(2*n);
gmean=obs(xptsorig)*v0;

%% set up arrays for r and optz and set optimisation options
r=zeros(length(s),1);
optz=r;
options = optimoptions('fminunc','Algorithm','quasi-newton','SpecifyObjectiveGradient',true,'OptimalityTolerance',1e-6);

%% initial seed point for minimisation
z0=0;

%% evaluate rate function at points specified in s
for i=1:length(s),
    minfun=@(z)legendre_function(z,P,v0,s(i),obs,n,I,J,V,xpts,gmean);
    [optz(i),r(i),~,~]=fminunc(minfun,z0,options);
    z0=optz(i);   %use previous optimum for next initialisation.
end

r=-r;
\end{lstlisting}

\begin{lstlisting}[caption = {This function evaluates the ``Legendre function'' (the function to be minimised) and its derivative. This requires twisting the matrix $P$ by $z$ and then computing the leading eigenvalue and eigenvector of the twisted matrix.}]
function [f,df] = legendre_function(z,P,v0,s,fun,n,I,J,V,xpts,gmean)

%evaluate 'legendre' function with fixed parameter s, maximising over z.

%% twist P by z
gvec=z*(fun(xpts)-gmean);
Vtwist=V.*exp(gvec);
Ptwist=sparse(I,J,Vtwist);

%% Calculate objective f
[v,lam]=powermethod(Ptwist,v0,1);
f=log(lam)-z*s;

%% Calculate gradient df
if nargout > 1 % gradient required
    v=v/sum(v);
    [phi,lam]=powermethod(Ptwist',ones(n,1),1);
    phi=phi/(phi'*v);
    gvecbasic=fun(1/(2*n):1/n:1-1/(2*n))-gmean;
    dlam=lam*phi'*(gvecbasic'.*v);
    df=dlam/lam-s;
end
\end{lstlisting}

\begin{lstlisting}[caption = {Estimation of the leading eigenvalue and eigenvector by repeated iteration.}]
function [v1,lam1]=powermethod(P,v0,lam0),

%P is a row stochastic matrix
%v0 is an initial (guessed) eigenvector
%lam0 is an initial (guessed) eigenvalue)

v0=v0/sum(v0);
v1=P'*v0;
lam1=sum(v1);
while abs(lam1-lam0)>1e-15,
    lam0=lam1;
    v0=v1/lam1;
    v1=P'*v0;
    lam1=sum(v1);
end
\end{lstlisting}


\end{document}